    \providecommand*{\input@path}{}
    \g@addto@macro\input@path{{../}}
\newlist{steps}{enumerate}{1}
\setlist[steps, 1]{itemsep=3pt,leftmargin=0cm,itemindent=.5cm,labelwidth=\itemindent,labelsep=0cm,align=left,label = \emph{Step \arabic*}:\,}
\newcommand{\myitem}[1]{%
\item[#1]\protected@edef\@currentlabel{#1}%
}
\newtheorem*{rep@theorem}{\rep@title}
\newcommand{\newreptheorem}[2]{%
\newenvironment{rep#1}[1]{%
 \def\rep@title{#2 \ref{##1}}%
 \begin{rep@theorem}}%
 {\end{rep@theorem}}}
\newtheorem*{rep@cor}{\rep@title}
\newcommand{\newrepcor}[2]{%
\newenvironment{rep#1}[1]{%
 \def\rep@title{#2 \ref{##1}}%
 \begin{rep@cor}}%
 {\end{rep@cor}}}
\newtheorem*{rep@prop}{\rep@title}
\newcommand{\newrepprop}[2]{%
\newenvironment{rep#1}[1]{%
 \def\rep@title{#2 \ref{##1}}%
 \begin{rep@prop}}%
 {\end{rep@prop}}}
\newtheorem{cor}{Corollary}[section]
\newtheorem{corx}{Corollary}
\newtheorem{theorem}[cor]{Theorem}
\newtheorem{thmx}[corx]{Theorem}
\newtheorem{prop}[cor]{Proposition}
\newtheorem{lemma}[cor]{Lemma}
\newtheorem*{problem*}{Problem}
\newtheorem*{theorem*}{Theorem}
\newtheorem*{cor*}{Corollary}
\theoremstyle{definition}
\newtheorem{defi}[cor]{Definition}
\theoremstyle{remark}
\newtheorem{remark}[cor]{Remark}
\newtheorem*{remark*}{Remark}
\theoremstyle{plain}
\newcommand{\thistheoremname}{}
\newtheorem*{genericthm}{\thistheoremname}
\newcommand{\C}{{\mathbb C}}
\newcommand{\R}{{\mathbb R}}
\newcommand{\Hyp}{\mathbb{H}}
\newcommand{\tr}{\mbox{\rm tr}}
\newcommand{\D}{\mathbb{D}}
\newcommand{\I}{\mathrm{I}}
\newcommand{\II}{\mathrm{I}\hspace{-0.04cm}\mathrm{I}}
\newcommand{\III}{\mathrm{I}\hspace{-0.04cm}\mathrm{I}\hspace{-0.04cm}\mathrm{I}}
\newcounter{notes}
\DeclareRobustCommand{\SkipTocEntry}[4]{}
\begin{document}

\setcounter{secnumdepth}{3}
\setcounter{tocdepth}{2}

\title{Complete CMC hypersurfaces in Minkowski $(n+1)$-space}
\author[Francesco Bonsante]{Francesco Bonsante}
\address{Francesco Bonsante: Dipartimento di Matematica ``Felice Casorati", Universit\`{a} degli Studi di Pavia, Via Ferrata 5, 27100, Pavia, Italy.} \email{bonfra07@unipv.it} 
\author[Andrea Seppi]{Andrea Seppi}
\address{Andrea Seppi: CNRS and Universit\'e Grenoble Alpes, 100 Rue des Math\'ematiques, 38610 Gi\`eres, France.} \email{andrea.seppi@univ-grenoble-alpes.fr}
\author[Peter Smillie]{Peter Smillie}
\address{Peter Smillie: Caltech, PMA Division, 1200 E California Blvd, Pasadena, CA, 91125, USA.} \email{smillie@caltech.edu}


\thanks{The first aurthor was partially supported by Blue Sky Research project ``Analytic and geometric properties of low-dimensional manifolds" . The first two authors are members of the national research group GNSAGA}

\begin{abstract}
We prove that any regular domain in Minkowski space is uniquely foliated by spacelike constant mean curvature (CMC) hypersurfaces. This completes the classification of entire spacelike CMC hypersurfaces in Minkowski space initiated by Choi and Treibergs. As an application, we prove that any entire surface of constant Gaussian curvature in 2+1 dimensions is isometric to a straight convex domain in the hyperbolic plane.
\end{abstract}

\maketitle
%



\section*{Introduction}

The study of spacelike hypersurfaces of constant mean curvature (CMC in short) in Minkowski space $\R^{n,1}$ has been widely developed since the 1980s, see for instance \cite{Treibergs:1982aa,tkmilnor,MR680653,MR963010,choitreibergs}. 
An important motivation is that among spacelike hypersurfaces in $\R^{n,1}$, CMC hypersurfaces are precisely those for which the Gauss map, with values in the hyperbolic space $\Hyp^n$, is \emph{harmonic}. 
Employing this idea for $n=2$, many interesting results have been obtained on harmonic maps from $\C$ or $\D$ to $\Hyp^2$ (see \cite{MR963010,nishi,MR1163452,MR1216578,MR1362649,GMM}). More recently several results appeared on CMC hypersurfaces in $\R^{n,1}$ admitting a co-compact action, thus giving rise to CMC compact Cauchy hypersurfaces in certain flat Lorentzian manifolds,  in \cite{MR1957665,MR2904912}, for $n=2$ in \cite{MR1968267,MR2216148}, and for manifolds with conical singularities in \cite{Schlenker-Krasnov,Chen2019}. The generalization of this problem to general Lorentzian manifolds satisfying some additional conditions is also of importance to general relativity, for example \cite{Gerhardt:1983aa}; see \cite{Bartnik:1987aa} or Section 4.2 of \cite{Gerhardt:2006aa} for a summary.

In this paper, we focus our attention on \emph{entire} spacelike hypersurfaces in $\R^{n,1}$, that is, graphs of functions $f:\R^n\to\R$ with $|Df|<1$. Entireness is equivalent to being properly embedded (Proposition \ref{prop:prop embedded}), and thus is invariant by the action of the isometry group of $\R^{n,1}$. While the only entire hypersurfaces of \emph{vanishing} mean curvature are spacelike planes (\cite{chengyau}, also \cite{calabi} for $n \leq 4$), hypersurfaces of constant mean curvature $H\neq 0$ have a much greater flexibility, with many examples produced in \cite{Treibergs:1982aa,choitreibergs}. Still there is some rigidity: Cheng and Yau, in the same article \cite{chengyau}, show that entire CMC hypersurfaces have complete induced metric and are convex (up to applying a time-reversing isometry). 

In this paper, we first provide a complete classification of entire CMC hypersurfaces in $\R^{n,1}$ (Theorem \ref{thm: existence}). Then we derive several applications of this classification in dimension three, that is for surfaces in $\R^{2,1}$, concerning surfaces of constant \emph{Gaussian} curvature and minimal Lagrangian diffeomorphisms between simply-connected hyperbolic surfaces.

\subsection*{Classification of entire CMC hypersurfaces}
Perhaps surprisingly, although partial results were obtained in \cite{Treibergs:1982aa,choitreibergs}, to our knowledge the literature lacks a complete classification of entire CMC hypersurfaces in Minkowski space. 

The fundamental notion for our classification is the \emph{domain of dependence} $\mathcal D(\Sigma)$ of a spacelike hypersurfaces $\Sigma$. Namely, $\mathcal D(\Sigma)$ is the set of points $p\in \R^{n,1}$ such that every inextensible causal curve though $p$ meets $\Sigma$ (Definition \ref{def domain of dependence}). The domain of dependence of any entire CMC hypersurface is a \emph{regular domain} {(Proposition \ref{prop dod of CMC is regular})}, a notion introduced in \cite{Bonsante} (see also \cite{MR2110829}) meaning an open domain obtained as the intersection of at least two future half-spaces bounded by non-parallel null hyperplanes. See Section \ref{sec: preliminaries} for further definitions and explanation.
Let us now state our classification result.

\begin{thmx}\label{thm: existence}\label{thm: uniqueness}
Given any regular domain $\mathcal D$ in $\R^{n,1}$ and any $H>0$, there exists a unique entire hypersurface $\Sigma\subset\R^{n,1}$ of constant mean curvature $H$ such that the domain of dependence of $\Sigma$ is $\mathcal D$. Moreover, {as $H$ varies in $(0,+\infty)$,} the entire hypersurfaces of constant mean curvature $H$ analytically foliate $\mathcal D$.
\end{thmx}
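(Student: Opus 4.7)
The plan is to establish existence, uniqueness, and the foliation property as three separate steps, all resting on maximum principle techniques adapted to the quasilinear elliptic mean curvature operator in Minkowski space.

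For \emph{uniqueness}, suppose two entire CMC-$H$ hypersurfaces $\Sigma_1, \Sigma_2$ share the same domain of dependence $\mathcal{D}$. By the Cheng--Yau convexity result already quoted in the introduction, both are graphs of convex functions $f_1, f_2 \colon \R^n \to \R$. Since $\mathcal{D}$ is regular, it is determined by its past lightlike boundary, so the graphs of $f_1$ and $f_2$ are asymptotic to the same convex function describing $\partial \mathcal{D}$, and their difference $u = f_1 - f_2$ is bounded with $u(x) \to 0$ at infinity in a suitable sense. The function $u$ satisfies the linear elliptic equation obtained by linearising the CMC PDE between $f_1$ and $f_2$. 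Completeness of the induced metrics on $\Sigma_i$ (again Cheng--Yau) allows an application of the Omori--Yau maximum principle at infinity, forcing $u \equiv 0$.

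For \emph{existence}, I would exhaust $\mathcal{D}$ from within by a nested sequence of simpler regular domains $\mathcal{D}_k$, each the intersection of only finitely many future null half-spaces, for which existence of an entire CMC-$H$ hypersurface with domain of dependence $\mathcal{D}_k$ is accessible via the Dirichlet-problem methods of Treibergs and Choi--Treibergs (solving on large Euclidean balls with null-graph barriers built from the finitely many bounding hyperplanes, then letting the radius tend to infinity). Uniform interior gradient and Hessian estimates of Cheng--Yau type, together with elliptic regularity for the analytic CMC operator, let one extract a smooth subsequential limit $\Sigma$, again an entire CMC-$H$ hypersurface. The delicate point, and the main obstacle I would expect, is to verify that $\mathcal{D}(\Sigma) = \mathcal{D}$: one inclusion is automatic from the monotonicity $\mathcal{D}_k \nearrow \mathcal{D}$, but the reverse inclusion requires sandwiching $\Sigma$ near infinity by barriers constructed from each individual null half-space defining $\mathcal{D}$, which is subtle when the null support directions of $\mathcal{D}$ accumulate continuously.

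For the \emph{foliation}, the key is strict monotonicity in $H$. Given $0 < H_1 < H_2$, translate $\Sigma_{H_1}$ in the future timelike direction until first contact with $\Sigma_{H_2}$: if such contact occurred at an interior point, the mean curvature inequality $H_1 < H_2$ would contradict the strong maximum principle, so in fact $\Sigma_{H_1}$ lies strictly in the past of $\Sigma_{H_2}$. That $\bigcup_{H>0} \Sigma_H$ fills all of $\mathcal{D}$ is then established by comparing, through any $p \in \mathcal{D}$, with hyperboloids of arbitrarily small and arbitrarily large mean curvature positioned deep in the past and future of $\mathcal{D}$ respectively. Analyticity of the foliation comes from applying the implicit function theorem to the one-parameter family of analytic solutions $H \mapsto f_H$ of the analytic elliptic CMC equation, now that strict monotonicity and continuity in $H$ are in place.
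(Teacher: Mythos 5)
Your proposal outlines the right framework --- Omori--Yau for uniqueness, barriers for existence, monotonicity plus a continuity argument for the foliation --- but several of the steps as stated would not go through, and the fixes the paper employs are conceptually essential rather than technical.

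\emph{Uniqueness.} You take $u = f_1 - f_2$ and claim $u \to 0$ at infinity. Sharing a domain of dependence only controls the null support function, i.e.\ the asymptotics of $f_i$ along rays in the directions where $\varphi$ is finite; it does \emph{not} give uniform decay of $f_1 - f_2$ (two CMC surfaces with the same $\mathcal{D}$ are merely a bounded Lorentzian distance apart). More seriously, even granting boundedness, the difference $u$ satisfies a linear elliptic equation $a^{ij}D_{ij}u + b^iD_iu = 0$ with \emph{no zeroth-order term of definite sign}, so the Omori--Yau argument gives no contradiction: at an almost-maximum you get $|\nabla u|\to 0$ and $\Delta u \lesssim 1/k$, and the PDE is consistent with $\sup u > 0$. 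What makes Omori--Yau bite in the paper (Lemma~\ref{lem: subsolution}) is the choice of test function: the \emph{Lorentzian distance} from $\Sigma_2$ to $\Sigma_1$, which satisfies the viscosity inequality
$\Delta u \geq \frac{nH_1H_2u}{1-H_1 u} - \frac{n H_2 |\nabla u|}{1 - H_1 u} - \frac{|\nabla u|^2}{u}$,
whose zeroth-order term is uniformly positive when $u$ is close to its supremum and $\nabla u$ is small. That sign is the whole point, and it is not visible from the Euclidean difference of the graph functions. A secondary issue is that you run Omori--Yau ``in $\R^n$'' whereas the Ricci lower bound and completeness that make the lemma applicable are intrinsic to $\Sigma_2$.

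\emph{Existence and foliation.} You correctly identify the weak spot of the exhaustion-from-within approach (passing the equality $\mathcal{D}(\Sigma)=\mathcal{D}$ to the limit), but do not resolve it. The paper sidesteps it entirely by using the cosmological time: the level sets $T^{-1}(0)=\partial\mathcal{D}$ and $T^{-1}(1/H)$ are sub/super-solutions (Lemma~\ref{lm:barr}), so the Choi--Treibergs sandwich produces $\Sigma$ trapped between two surfaces already known to have domain of dependence $\mathcal{D}$, which gives $\mathcal{D}(\Sigma)=\mathcal{D}$ directly. For the foliation, the ``translate until first contact'' argument does not apply on a noncompact hypersurface --- there may be no first contact, only asymptotic merging --- and the paper instead deduces strict separation from the comparison theorem plus the strong maximum principle. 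Your proposed hyperboloid barriers have the wrong domain-of-dependence relationship: to conclude $\Sigma_H \subset J^+(B)$ from the comparison theorem you need $\Sigma_H\subset\mathcal{D}(B)$, but a hyperboloid's domain of dependence is a light cone $I^+(q)$, which for a general regular $\mathcal{D}$ does not contain $\Sigma_H$. The paper instead encloses $\mathcal{D}$ in a \emph{wedge} and uses troughs, whose domain of dependence does contain $\Sigma_H$. Finally, ``apply the implicit function theorem'' leaves unaddressed the substantive step: invertibility of the linearization $L=\Delta-|\II|^2$ on a global H\"older space, for which the paper uses the Cauchy--Schwarz lower bound $|\II|^2\geq nH^2$ to get a maximum principle and uniform Schauder estimates, and then invokes a Siciak-type theorem to upgrade separate to joint analyticity. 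Without that, one gets at best smooth dependence, not analyticity.
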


A first simple example of a regular domain is the cone of future timelike directions from some point $p\in \R^{n,1}$, which is the intersection of all future half-spaces bounded by lightlike hyperplanes containing $p$, and is foliated by hyperboloids. A qualitatively opposite example are \emph{wedges} (Figure \ref{fig:trough}), namely those regular domains obtained as the intersection of precisely two future half-spaces neither of which is contained in the other. These are foliated by \emph{troughs}, that is, entire CMC hypersurfaces which are products of a hyperbola and a $(n-1)$-dimensional spacelike affine subspace.

\begin{figure}[htb]
\centering
\includegraphics[height=4.5cm]{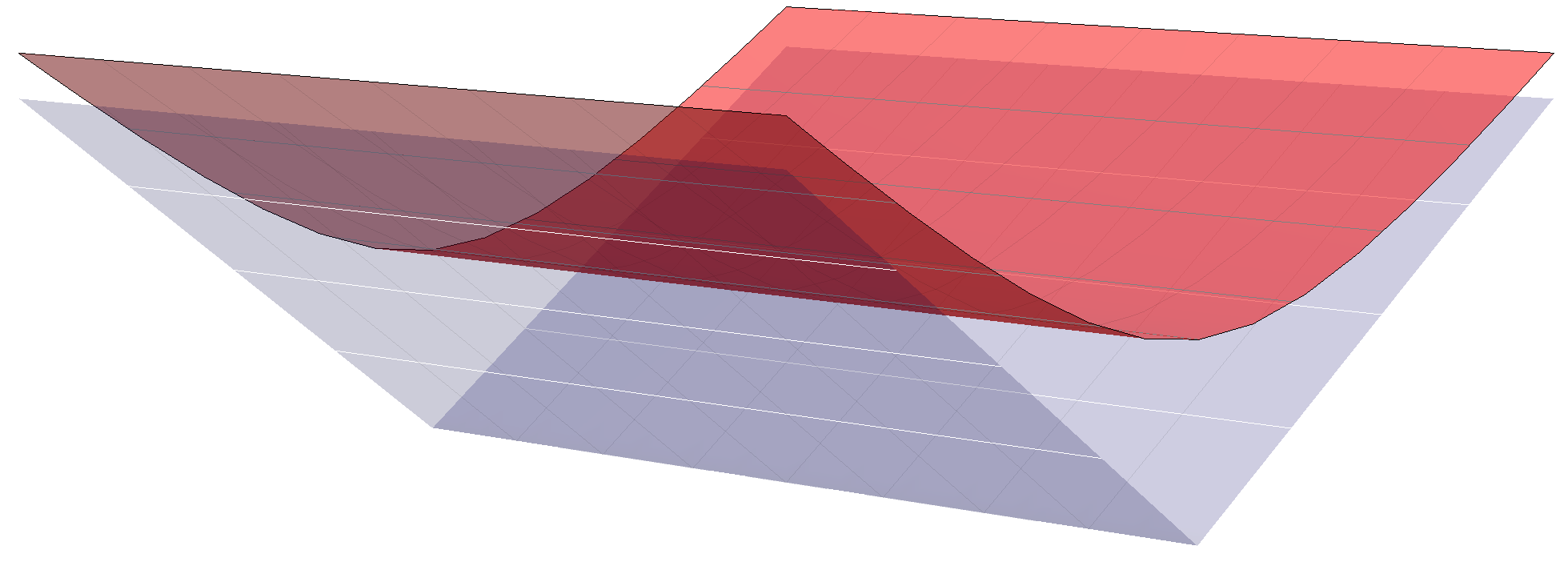}
\caption{The two dimensional trough $T$, whose domain of dependence is a wedge. \label{fig:trough}}
\end{figure}

There is a 1-to-1 correspondence between regular domains in $\R^{n,1}$ and lower semicontinuous functions $\varphi:\mathbb S^{n-1}\to\R\cup\{+\infty\}$ (Proposition \ref{prop: support function}). The correspondence associates to the function $\varphi$ the regular domain which is obtained as the intersection of the half-spaces $\{(\mathbf{x},x_{n+1})\in\R^{n,1}\,:\,x_{n+1}>\mathbf{x}\cdot \mathbf{y}-\varphi(\mathbf{y})\}$, as $\mathbf{y}$ varies in $\mathbb S^{n-1}$. For instance, the hyperboloid centered at the origin corresponds to $\varphi\equiv 0$, whereas wedges correspond to functions $\varphi$ which are finite on exactly two points. 
From this perspective, our classification of entire CMC hypersurfaces can be interpreted as follows.

\begin{thmx} There is a bijective correspondence between the set of entire CMC hypersurfaces in $\R^{n,1}$ and the set of lower semicontinuous functions on $\mathbb S^{n-1}$ finite on at least two points.
\end{thmx}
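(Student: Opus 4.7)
The plan is to derive this statement as a direct reformulation of Theorem~\ref{thm: existence}, combined with the support-function parametrization of regular domains recorded in Proposition~\ref{prop: support function}; no further analytic work is required.

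First I would recall the two ingredients. Proposition~\ref{prop: support function} sets up a bijection between lower semicontinuous functions $\varphi\colon\mathbb{S}^{n-1}\to\R\cup\{+\infty\}$ and regular domains in $\R^{n,1}$, sending $\varphi$ to $\mathcal{D}_\varphi=\bigcap_{\mathbf{y}\in \mathbb{S}^{n-1}}\{(\mathbf{x},x_{n+1}):x_{n+1}>\mathbf{x}\cdot\mathbf{y}-\varphi(\mathbf{y})\}$. The condition that $\varphi$ be finite on at least two points corresponds exactly to the clause in the definition of a regular domain demanding that the intersection involve at least two future half-spaces bounded by non-parallel null hyperplanes. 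On the other side, Proposition~\ref{prop dod of CMC is regular} ensures that the domain of dependence $\mathcal{D}(\Sigma)$ of any entire CMC hypersurface $\Sigma$ is a regular domain, so the assignment $\Sigma\mapsto\varphi_{\mathcal{D}(\Sigma)}$ is well-defined.

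To conclude I would check this assignment is a bijection for each fixed mean curvature $H>0$. Injectivity is the uniqueness part of Theorem~\ref{thm: existence}: two CMC-$H$ hypersurfaces sharing the same domain of dependence must coincide. Surjectivity is its existence part: given any admissible $\varphi$, take the corresponding regular domain $\mathcal{D}_\varphi$ and invoke Theorem~\ref{thm: existence} to produce the desired CMC-$H$ hypersurface with $\mathcal{D}(\Sigma)=\mathcal{D}_\varphi$. The main obstacle is therefore already past---it is Theorem~\ref{thm: existence} itself. The only caveat is to match the ``at least two points'' condition on $\varphi$ precisely with the definition of regular domain, ruling out the degenerate cases where $\varphi$ is finite on zero or one point, whose would-be $\mathcal{D}_\varphi$ is either all of $\R^{n,1}$ or a single half-space, not a regular domain.
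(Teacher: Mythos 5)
Your proposal is correct and is essentially the intended argument: Theorem B is a reformulation of Theorem A once one combines the bijection of Proposition~\ref{prop: support function} (restricted to the subset of functions finite on at least two points, which corresponds to the ``at least two non-parallel null support planes'' clause in Definition~\ref{def: regular domain}) with Proposition~\ref{prop dod of CMC is regular}, and the paper does not give a separate proof beyond this observation. You are also right to supply the small but necessary reading of the statement: as written, ``the set of entire CMC hypersurfaces'' cannot literally be in bijection with null support functions, since each regular domain carries a whole one-parameter family $\{\Sigma_H\}_{H>0}$; the bijection is meant for each fixed $H>0$, exactly as you stipulate. Your two-sentence check that functions finite on zero or one point yield $\R^{n,1}$ or a single null half-space rather than a regular domain is the right way to align the side conditions.
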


In \cite{bon_smillie_seppi} we refer to the lower semicontinous function $\varphi$ as the \emph{null support function} of the CMC hypersurface. There are at least two other notions of asymptotics of an entire surface in $\R^{n,1}$ in the literature: cuts at future null infinity as in \cite{Andersson:1999aa,Stumbles:1981aa}, and blowdown data $(L,f_0)$ as in \cite[Theorem 6.2]{choitreibergs}. In Minkowski space, all three of these notions are equivalent (Propositions \ref{prop: null cut} and \ref{prop: choi treibergs function}).

The result of \cite{choitreibergs} is an important predecessor to our theorem. To translate their result into the language of null support functions, say that a function valued in $\R \cup \{+\infty\}$ is nearly continuous if the set on which it is finite is closed and it continuous when restricted to that set. Then Choi and Treibergs prove that if $\varphi$ is a lower semicontinuous function on $\mathbb S^{n-1}$ which is nearly continuous and finite on at least two points, then there exists an entire CMC hypersurface with null support function $\varphi$. Compared to \cite{choitreibergs}, our contribution is to extend the existence theorem to all lower semicontinuous functions finite on at least two points (Section \ref{sec: existence}) and crucially to prove uniqueness (Section \ref{sec: uniqueness}).

Let us now briefly discuss the ingredients involved, starting with the proof of uniqueness.

\subsubsection*{Uniqueness} 
The proof of the uniqueness statement of Theorem \ref{thm: existence} consists in an application of the Omori-Yau maximum principle. In fact, in Theorem \ref{thm: comparison} we prove a comparison principle: if $\Sigma_1$ and $\Sigma_2$ are two entire CMC hypersurfaces with constant mean curvature $H_1\geq H_2$ and $\Sigma_2\subset \mathcal D(\Sigma_1)$, then  $\Sigma_2$ cannot meet the past of $\Sigma_1$. The uniqueness statement then follows immediately, for if $\Sigma_1$ and $\Sigma_2$ have the same constant mean curvature and the same domain of dependence, then they necessarily coincide.

To prove such a comparison result, we consider the Lorentzian distance (Definition \ref{def: Lorentzian distance}) to $\Sigma_1$ as a function $u$ on $\Sigma_2$. Where $u$ is positive, we derive the estimate (Lemma \ref{lem: subsolution}) \[\Delta u \geq \frac{nH_1H_2u}{1-H_1 u} - \frac{n H_2 |\nabla u|}{1 - H_1 u} - \frac{|\nabla u|^2}{u}~.\]
This shows immediately that $u$ cannot attain a positive maximum on $\Sigma_2$. To prove the stronger result that $u$ can never be positive at all on $\Sigma_2$, we apply the Omori-Yau argument. Namely, we observe that $u$ is bounded from above and that $\Sigma_2$ has an a priori lower bound for its Ricci curvature. This together with the key result of Cheng and Yau that $\Sigma_2$ is complete allows us to construct a supersolution of the same equation in terms of the intrinsic distance on $\Sigma_2$ which touches $u$ from above at a point. Since $u$ is a subsolution, this contradicts the maximum principle, and the contradiction shows that $u$ {cannot be positive anywhere}.



\subsubsection*{A general comparison principle}  We also include in Section \ref{sec: uniqueness} a generalization of the comparison result beyond what we need for the proof of Theorem \ref{thm: existence}. Namely, we relax the assumption of constant mean curvature to merely bounded mean curvature: if $\Sigma_1$ and $\Sigma_2$ are entire spacelike hypersurfaces with the mean curvature of $\Sigma_1$ bounded below by some positive constant $H$ and the mean curvature of $\Sigma_2$ bounded above by $H$, and if furthermore $\Sigma_2 \subset \mathcal{D}(\Sigma_1)$, then $\Sigma_2$ lies weakly in the future of $\Sigma_1$. The essence of the proof is simply to show that the entire hypersurface of constant mean curvature $H$ in $\mathcal{D}(\Sigma_1)$ lies between them. The proof of this general comparison principle thus relies on the existence part.

\subsubsection*{Existence} The ingredients for the proof of the existence of an entire CMC hypersurface in any domain of dependence are mostly contained in the articles \cite{Treibergs:1982aa,choitreibergs}. In fact, if we fix a constant $H>0$, writing an entire hypersurface as the graph of some function $u:\R^n\to\R$, the CMC condition translates to a certain quasi-linear PDE on $u$. The fundamental proposition, stated in \cite[Proposition 6.1]{choitreibergs}, asserts that if one has two functions $v,w\in C^{0,1}(\R^n)$ which are respectively a weak sub- and super-solution  for such a quasi-linear equation with $v\leq w$, then there exists a solution $u$ which is sandwiched between $v$ and $w$.
Although stated in \cite[Proposition 6.1]{choitreibergs}, the cited references \cite{Treibergs:1982aa} and \cite{MR680653} for this proposition do not provide the statement exactly in this form. For this reason, we decided to include {in Section \ref{ss:ctargument} a roadmap to the proof} for convenience of the reader.
 
Applying the above proposition, we use level sets of cosmological time for any regular domain $\mathcal{D}$ as upper and lower barriers to prove the existence part of Theorem \ref{thm: existence}. The cosmological time, $T$, is the function on a regular domain measuring the Lorentzian distance to its boundary, and its relevant properties were described in \cite{Bonsante} (see Proposition \ref{prop:T level sets}). This idea was used in the cocompact case in \cite{MR2216148}, in which the author use the hypersurface $T^{-1}(1/H)$ as a supersolution and $T^{-1}(1/nH)$ as a subsolution. To save some effort, we use $T^{-1}(0)$, the boundary of the domain of dependence, as a subsolution in our proof of existence. This is sufficient to produce an entire CMC hypersurface $\Sigma$ with $\mathcal{D}(\Sigma) = \mathcal{D}$. 


\subsubsection*{Foliation}
It remains to discuss the proof of the fact that the hypersurfaces $\Sigma_H$ having constant mean curvature $H$ and domain of dependence $\mathcal D$ foliate $\mathcal D$ itself. By the comparison theorem which we used to prove uniqueness (Theorem \ref{thm: comparison}), we obtain that the $\Sigma_H$ are pairwise disjoint, and moreover $\Sigma_{H_1}$ is in the past of $\Sigma_{H_2}$ if $H_1>H_2$. It thus remains to show that every point $p\in\mathcal D$ belongs to some $\Sigma_H$, which can be done by rather standard techniques as in \cite{MR2904912,Bonsante:2015vi,MR3789829,nieseppi}. 
In fact, given any point $p\in\mathcal D$, by techniques similar to those we used for the existence part, one shows that the two hypersurfaces defined as the supremum (resp. infimum) of all CMC hypersurfaces in $\mathcal D$ whose future (resp. past) contain $p$ are CMC hypersurfaces with the same constant mean curvature, hence by uniqueness they necessarily coincide and contain $p$ itself. The existence of \emph{some} such CMC hypersurfaces having $p$ in their future/past follows in one case from a simple upper bound on the cosmological time, and in the other from an application of the comparison principle (Theorem \ref{thm: comparison}) using troughs as barriers.

To prove that the foliation is analytic, we apply the analytic inverse function theorem in Banach spaces. To set this up, we fix a leaf $\Sigma$, and consider normal graphs of functions $u$ over $\Sigma$. The mean curvature of the graph of $u$ defines a differential operator on $\Sigma$, which we show by the inverse function theorem is locally invertible near $u=0$ as a map between global H\"older spaces. Consequently, for values of $H'$ near the mean curvature $H$ of $\Sigma$, there is a unique bounded function $u_{H'}$ on $\Sigma$ whose normal graph has mean curvature $H'$. Of course, we already knew this much from the existence of the foliation and the observation that two CMC surfaces share a domain of dependence if and only if they are a bounded distance apart. But since the mean curvature is an analytic differential operator, the analytic inverse function theorem implies that this family of H\"older functions is analytic in the parameter $H'$. Then it follows from classical results on analytic functions that in fact $u_{H'}(x)$ is jointly analytic in $H'$ and $x$, and therefore gives an analytic foliation chart.

%

\subsection*{Applications to hyperbolic surfaces in $\R^{2,1}$}
The final section of this paper focuses on $n=2$, and provides a number of applications of Theorem \ref{thm: existence} 
to surfaces of constant \emph{Gaussian} curvature, in other words surfaces such that the determinant of the shape operator is constant. Taking the constant to be one, by Gauss' equation these surfaces are \emph{hyperbolic}, meaning that the first fundamental form is a hyperbolic metric. The relationship lies in the classical observation that if $\Sigma$ has constant intrinsic curvature -1, then the surface which lies at Lorentzian distance one from $\Sigma$ to the convex side has constant mean curvature $H = 1/2$.

Just as CMC hypersurfaces are characterized as those with harmonic Gauss map, among immersed spacelike surfaces $\Sigma$ in $\R^{2,1}$ \emph{hyperbolic} surfaces are exactly those whose Gauss map $G$ is a minimal Lagrangian local diffeomorphism; that is, the graph of $G$ is a minimal Lagrangian surface in $\Sigma\times\Hyp^2$. If moreover $\Sigma$ is embedded, then $G$ is a diffeomorphism onto its image.

A classification of \emph{entire} surfaces of constant Gaussian curvature has been completed in \cite{bon_smillie_seppi}, after several partial results had been obtained in \cite{Li,schoenetal,barbotzeghib,Bonsante:2015vi}. In short, in \cite{bon_smillie_seppi} we proved that every regular domain which is the intersection of at least three pairwise non-parallel future half-spaces contains a unique entire surface $\Sigma$ of constant Gaussian curvature $K$, for any $K>0$. However, it has been observed (for instance in \cite{hanonomizu}) that an entire surface of constant Gaussian curvature $K>0$ is not necessarily complete, thus highlighting a substantial difference with respect to mean curvature. In other words the first fundamental form of $\Sigma$, being hyperbolic, is \emph{locally} isometric to $\Hyp^2$, but in general not \emph{globally} isometric.

There are thus several questions which arise naturally. For instance:
\begin{enumerate}
\myitem{$i)$}\label{question: complete} When is an entire hyperbolic surface $\Sigma$ in $\R^{2,1}$ complete, in terms of the domain of dependence of $\Sigma$?
\myitem{$ii)$}\label{question: intrinsic} When $\Sigma$ is not complete, to which hyperbolic surface is it intrinsically isometric?
\myitem{$iii)$} \label{question: intrinsic2} 
Conversely, which hyperbolic surfaces can be isometrically embedded in $\R^{2,1}$ with image an entire surface?
\end{enumerate}

Question \ref{question: complete} appears to be the most difficult, and is left for future investigation. In this paper we answer questions \ref{question: intrinsic} and \ref{question: intrinsic2}.

\subsubsection*{Entireness and minimal Lagrangian graphs}
 Let us first introduce a definition. Let $(S,h)$ and $(S',h')$ be simply connected hyperbolic surfaces. We say that
  a smooth map $f:(S,h)\to (S',h')$ is \emph{realizable} in $\R^{2,1}$ if there exists an isometric immersion $\sigma:(S,h)\to\R^{2,1}$ and a local isometry $d:(S',h')\to\Hyp^2$ such that $d\circ f= G_\sigma$ where $G_\sigma:S\to\Hyp^2$ is the Gauss map of $\sigma$. If moreover the immersion is proper, which is equivalent to its image being entire (Proposition \ref{prop:prop embedded}), we say that $f$ is \emph{properly realizable}.

It is known that realizability of $f$ is equivalent to being a minimal Lagrangian local diffeomorphism. The following theorem gives a characterization of properly realizable minimal Lagrangian maps, in terms of their graphs in the Riemannian product of $(S,h)$ and $(S',h')$.

\begin{thmx}\label{cor graph metric}
Let $f:(S,h)\to (S',h')$ be a diffeomorphism between simply connected hyperbolic surface. Then $f$ is properly realizable in $\R^{2,1}$ if and only if the graph of $f$ is a complete minimal Lagrangian surface in $(S\times S', h\oplus h')$. In this case, both $(S,h)$ and $(S',h')$ are isometric to straight convex domains in $\Hyp^2$.
 \end{thmx}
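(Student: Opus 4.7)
The plan is to reduce Theorem~\ref{cor graph metric} to Theorem~\ref{thm: existence} and the Cheng--Yau completeness theorem, via the classical parallel correspondence between hyperbolic surfaces and spacelike CMC $1/2$ surfaces in $\R^{2,1}$. The key identity is that the graph metric of $f$ is bilipschitz to the first fundamental form of the parallel CMC $1/2$ surface of $\sigma(S)$.

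Using realizability, fix an isometric immersion $\sigma \colon (S,h) \to \R^{2,1}$ and a local isometry $d \colon (S',h') \to \Hyp^2$ with $d \circ f = G_\sigma$. Since $d$ is a local isometry, $f^* h' = G_\sigma^* g_{\Hyp^2} = \III$, so the graph metric on $S$ coincides with $h + f^* h' = \I + \III$. Hyperbolicity $\det B = 1$ combined with Cayley--Hamilton gives $I + B^2 = 2HB$ as operators, and hence $\I + \III = 2H\,\II$. The parallel immersion $\tau$ of $\sigma$ at Lorentzian distance one on the convex side is a spacelike CMC $1/2$ immersion whose first fundamental form $I_\tau$ is also a positive multiple of $\II$, with pointwise ratio $I_\tau/(\I + \III) = 1 + 1/|H| \in (1, 2]$ (since $|H| \ge 1$). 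Hence the graph metric is bilipschitz to $I_\tau$ with constant $2$; in particular graph completeness is equivalent to completeness of $(S, I_\tau) = (S, \tau^* g_{\R^{2,1}})$.

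For the forward direction, assume $\sigma$ is proper. I claim that $\tau$ is also proper. If $\tau(s_n) \to q$, then either $N(s_n)$ remains in a compact subset of $\Hyp^2$, in which case $\sigma(s_n)$ is bounded and properness of $\sigma$ forces $s_n$ to converge; or else $N(s_n)$ escapes to the ideal boundary of $\Hyp^2$, which would force $\sigma(s_n)$ to diverge toward a past-null direction, incompatible with the confinement of $\sigma(S)$ to a future-type regular domain of dependence. Hence $\tau(S)$ is an entire CMC $1/2$ surface, so by Cheng--Yau the metric $I_\tau$ is complete, whence the graph metric is complete.

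For the reverse direction, assume the graph is complete, so $\tau$ is a complete spacelike CMC $1/2$ immersion. The proof of the comparison principle (Theorem~\ref{thm: comparison}) relies only on completeness (and a Ricci lower bound) of the compared surface via the Omori--Yau maximum principle, so it extends to merely complete immersions. Applying this extension in $\mathcal D := \mathcal D(\tau(S))$ together with Theorem~\ref{thm: existence}, the completeness of $\tau$ forces it to coincide with the unique entire CMC $1/2$ surface in $\mathcal D$; in particular $\tau$ is proper. Necessarily $\mathcal D$ has at least three non-parallel null directions, for otherwise $\sigma$, as a hyperbolic immersion parallel to $\tau$, would sit in a wedge or half-space, contradicting \cite{bon_smillie_seppi}. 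Then $\mathcal D$ contains a unique entire hyperbolic surface $\bar\Sigma$ by \cite{bon_smillie_seppi}, and by the parallel correspondence $\sigma(S) = \bar\Sigma$; thus $\sigma$ is proper and $f$ is properly realizable. Finally, the conclusion that $(S,h)$ and $(S',h')$ are isometric to straight convex domains in $\Hyp^2$ follows from the intrinsic classification of entire hyperbolic surfaces in \cite{bon_smillie_seppi}, applied to $\sigma(S) = \bar\Sigma$ and to its Gauss map image $d(S') = G_\sigma(S)$. The main technical obstacle is this reverse direction, specifically the extension of Theorem~\ref{thm: comparison} to complete, possibly non-entire CMC immersions.
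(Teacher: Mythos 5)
Your overall strategy (normal flow to the parallel CMC-\nicefrac{1}{2} surface, bi-Lipschitz equivalence of the graph metric with the CMC first fundamental form, and Cheng--Yau completeness) is the same as the paper's. Your bi-Lipschitz computation is a nice sharpening: using Cayley--Hamilton with $\det B=1$ to get $\I+\III=2H\,\II$ and $I_\tau=2(H+1)\II$, so the ratio is $(H+1)/H\in(1,2]$; the paper instead just uses Young's inequality and convexity of $\II$ to get bi-Lipschitz with constant $2$. Your forward direction (properness of $\tau$ from properness of $\sigma$ via a sequential compactness argument on the Gauss map) is heuristic and would need to be made precise, whereas the paper routes this through Proposition~\ref{prop:T level sets}, which directly shows the equidistant surface $\Sigma_+=T^{-1}(1)$ is entire with the same domain of dependence; but these are essentially interchangeable.

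The genuine gap is in your reverse direction. You propose, as the ``main technical obstacle,'' to extend the comparison principle of Theorem~\ref{thm: comparison} to merely complete (possibly non-entire) CMC immersions and apply it in $\mathcal{D}(\tau(S))$. This is both unnecessary and, as stated, circular: the set $\mathcal{D}(\tau(S))$ is only defined once $\tau(S)$ is known to be an achronal hypersurface, which in turn requires knowing $\tau$ is an embedding onto an entire surface --- precisely what you are trying to establish. The tool you want is already Proposition~\ref{lemma complete implies entire} (Bonsante's Lemma 3.1): a spacelike immersion whose first fundamental form is complete is automatically a proper embedding onto an entire hypersurface. Once the graph metric is complete, $I_\tau$ is complete by bi-Lipschitz equivalence, so $\tau$ is entire by that proposition; Corollary~\ref{cor past flow uniqueness} (ruling out the trough, which you do correctly) then identifies $\sigma(S)$ with an entire hyperbolic surface and yields properness of $\sigma$. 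No extension of the Omori--Yau comparison argument to non-entire immersions is needed.
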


The second part of the statement answers question \ref{question: intrinsic}. A \emph{straight convex domain} in $\Hyp^2$ is the interior of the convex hull of a subset of $\partial_\infty\Hyp^2$ consisting of at least 3 points. See also Corollary \ref{cor first fundamental form} below.

Observe that from the definition, it is easy to see that the inverse of a minimal Lagrangian diffeomorphism is again minimal Lagrangian. The following is then a straightforward corollary of Theorem \ref{cor graph metric}:

\begin{corx}\label{thm:inverse min lag}
Let $f:(S,h)\to (S',h')$ be a minimal Lagrangian diffeomorphism between simply connected hyperbolic surface. Then $f$ is properly realizable in $\R^{2,1}$ if and only if $f^{-1}$ is properly realizable in $\R^{2,1}$.
\end{corx}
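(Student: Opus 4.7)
The plan is to deduce the statement directly from Theorem \ref{cor graph metric} by exploiting the symmetry between a diffeomorphism and its inverse at the level of graphs. Since, as noted in the remark preceding the statement, the inverse of any minimal Lagrangian diffeomorphism between simply connected hyperbolic surfaces is again a minimal Lagrangian diffeomorphism, Theorem \ref{cor graph metric} applies to both $f$ and $f^{-1}$. Concretely, $f$ is properly realizable in $\R^{2,1}$ if and only if its graph is a complete minimal Lagrangian surface in $(S\times S', h\oplus h')$, and $f^{-1}$ is properly realizable in $\R^{2,1}$ if and only if its graph is a complete minimal Lagrangian surface in $(S'\times S, h'\oplus h)$.

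The key observation is that the swap map $\tau:(S\times S', h\oplus h') \to (S'\times S, h'\oplus h)$ defined by $\tau(x,y) = (y,x)$ is a Riemannian isometry, and that it sends the graph of $f$, namely $\{(x,f(x)) : x\in S\}$, onto the graph of $f^{-1}$, namely $\{(y,f^{-1}(y)) : y\in S'\}$. Because $\tau$ is an isometry, the induced map between the two graphs (equipped with their induced metrics) is again an isometry, so completeness of one is equivalent to completeness of the other; minimality as a submanifold and the Lagrangian property are likewise preserved (the swap reverses the sign of the split symplectic form on the product, but being Lagrangian is insensitive to this sign).

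Combining the two applications of Theorem \ref{cor graph metric} with the invariance under $\tau$ yields the desired equivalence. There is no substantive obstacle: the entire content of the corollary lies in observing that the characterization of proper realizability furnished by Theorem \ref{cor graph metric} is manifestly symmetric under exchanging the roles of $f$ and $f^{-1}$, which is exactly what the isometry $\tau$ encodes.
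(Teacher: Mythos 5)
Your proposal is correct and is essentially the paper's argument. The paper phrases the key symmetry by pulling the induced metrics on the two graphs back to $S$ and $S'$ respectively, observing that $h'+(f^{-1})^*h=(f^{-1})^*(h+f^*h')$, so one is complete iff the other is; your swap isometry $\tau$ on the ambient product is the same observation stated at the level of the graphs themselves.
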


\subsubsection*{Outline of the proof} Let us spend a few words here to outline the proof of Theorem \ref{cor graph metric}.
The basic observation (Proposition \ref{prop normal flow}) is that for any entire hyperbolic surface $\Sigma$ in $\R^{2,1}$, the surface $\Sigma_+$ at Lorentzian distance one with constant mean curvature $H= 1/2$ is still entire, and the two have the same domain of dependence. A consequence of the uniqueness of Theorem \ref{thm: uniqueness}, together with the main result of \cite{bon_smillie_seppi}, is that the converse is almost always true (Corollary \ref{cor past flow uniqueness}): if $\Sigma_+$ is any entire CMC-\nicefrac{1}{2} surface except for the trough, then the surface $\Sigma$ at Lorentzian distance one to the past is still entire (with the same domain of dependence).
To prove the first part of Theorem \ref{cor graph metric}, it then suffices to observe that the first fundamental form of $\Sigma_+$ is bi-Lipschitz equivalent to the induced metric on the graph of the minimal Lagrangian map in the Riemannian product, and by the Cheng and Yau completeness theorem, entireness of the equidistant CMC-\nicefrac{1}{2} surface $\Sigma_+$ is equivalent to completeness of its first fundamental form.


The second part of Theorem \ref{cor graph metric} is proved by applying \cite[Theorem E]{bon_smillie_seppi}, which states that the image of the Gauss map of any entire hyperbolic surface is a straight convex domain. Alternatively one can apply a similar statement for entire CMC hypersurfaces given in \cite[Theorem 4.8]{choitreibergs}). The symmetry provided by Corollary \ref{thm:inverse min lag} then implies that $(S,h)$ is isometric to a straight convex domain as well.

\subsubsection*{Characterizing the intrinsic metrics}
Let us now conclude by answering question \ref{question: intrinsic2}. 

\begin{corx} \label{cor first fundamental form}
A hyperbolic surface can be embedded isometrically and properly in $\R^{2,1}$ if and only if it is isometric to a straight convex domain.
\end{corx}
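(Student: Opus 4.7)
I will prove the corollary using Theorem \ref{cor graph metric} together with the symmetry contained in Corollary \ref{thm:inverse min lag}; the only other ingredient is the existence part of the classification of entire hyperbolic surfaces from \cite{bon_smillie_seppi}.

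For the forward direction, suppose $(S,h)$ admits a proper isometric embedding $\sigma:(S,h)\to\R^{2,1}$. The image is an entire hyperbolic surface, hence a graph of a function on $\R^2$ by Proposition \ref{prop:prop embedded}, so $(S,h)$ is simply connected. By the discussion in the introduction, the Gauss map $G_\sigma:S\to\Hyp^2$ of an embedded entire hyperbolic surface is a minimal Lagrangian diffeomorphism onto its image, and by Theorem E of \cite{bon_smillie_seppi} this image $U:=G_\sigma(S)$ is a straight convex domain in $\Hyp^2$. Taking $(S',h')=(U,g_{\Hyp^2}|_U)$, the map $f:=G_\sigma$ is a diffeomorphism between simply connected hyperbolic surfaces and is properly realizable, as witnessed by $\sigma$ itself and $d:=\mathrm{id}_U$. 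The second conclusion of Theorem \ref{cor graph metric} then states that $(S,h)$ is isometric to a straight convex domain.

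For the reverse direction, let $U\subset\Hyp^2$ be a straight convex domain, with ideal boundary $A\subset\partial_\infty\Hyp^2\simeq\mathbb S^1$ of cardinality at least three. Choosing any lower semicontinuous $\varphi:\mathbb S^1\to\R\cup\{+\infty\}$ finite exactly on $A$, the main classification of \cite{bon_smillie_seppi} produces an entire hyperbolic surface $\Sigma$ in the regular domain associated to $\varphi$, and Theorem E of \cite{bon_smillie_seppi} identifies the image of its Gauss map with the straight convex domain having ideal boundary $A$, namely $U$ itself. Writing $(S,h)$ for $\Sigma$ equipped with its first fundamental form and $f:=G_\Sigma:(S,h)\to(U,g_{\Hyp^2}|_U)$ for its Gauss map (a diffeomorphism since $\Sigma$ is embedded), the map $f$ is properly realizable via the inclusion $\sigma:\Sigma\hookrightarrow\R^{2,1}$ and $d=\mathrm{id}_U$. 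Corollary \ref{thm:inverse min lag} then says that $f^{-1}:(U,g_{\Hyp^2}|_U)\to(S,h)$ is also properly realizable, which unwraps to a proper isometric immersion $\sigma':(U,g_{\Hyp^2}|_U)\to\R^{2,1}$. Being proper and spacelike, $\sigma'$ has entire image by Proposition \ref{prop:prop embedded} and is therefore an embedding, yielding the desired proper isometric embedding of $U$ into $\R^{2,1}$.

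The main subtle point, and the one I expect to require the most care, is the identification of the Gauss image of $\Sigma$ with $U$ in the reverse direction. This combines Theorem E of \cite{bon_smillie_seppi}, which says the Gauss image of an entire hyperbolic surface is a straight convex domain, with the correspondence between the set on which the null support function is finite and the ideal boundary of that Gauss image. Once that identification is in place, the heart of the argument is the symmetry provided by Corollary \ref{thm:inverse min lag}, which lets us trade the roles of $(U,g_{\Hyp^2}|_U)$ as the Gauss image and as the intrinsic metric of an entire hyperbolic surface.
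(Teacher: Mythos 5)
Your proof is correct and follows essentially the same route as the paper: the forward direction invokes the second conclusion of Theorem \ref{cor graph metric}, and the reverse direction uses the existence result of \cite{bon_smillie_seppi} to produce an entire hyperbolic surface whose Gauss image is the given straight convex domain and then applies Corollary \ref{thm:inverse min lag}. You have simply written out in more detail the steps the paper leaves implicit (setting up the data $(S,h)$, $(S',h')$, $f$ so that the theorem applies, and unwrapping what ``properly realizable'' yields), but the key ingredients and their order of use are identical.
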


Being a necessary condition follows from Theorem \ref{cor graph metric}. To show that the condition is also sufficient, \cite[Theorem A]{bon_smillie_seppi} implies that one can find an entire hyperbolic surface whose Gauss map has image any straight convex domain. Applying again  Corollary \ref{thm:inverse min lag}
 gives the desired statement.

As a final comment, the hypothesis of entireness is clearly essential in Corollary \ref{cor first fundamental form}, as any domain in $\Hyp^2$ can be realized without the entireness assumption. But we remark here that the situation is  even subtler, since also hyperbolic surfaces which are not isometric to a subset of $\Hyp^2$ can be embedded as non-entire surfaces.
In fact, in \cite[Appendix A]{Bonsante:2015vi}, an example of non-entire surface in $\R^{2,1}$ intrinsically isometric to the universal cover of the complement of a point in $\Hyp^2$ is constructed.


 \subsection*{Organization of the paper}
 
 In Section \ref{sec: preliminaries} we introduce the necessary background, and in addition we show the equivalence of several notions of asymptotics. In Section \ref{sec: uniqueness} we prove the uniqueness part of Theorem \ref{thm: uniqueness}, while Section \ref{sec: existence} shows the existence part and Section \ref{sec: foliation} shows the foliation result. Finally, Section \ref{sec: application} gives applications in dimension $2+1$. 
 
\subsection*{Acknowledgements}

The third author would like to thank Jonathan Zhu and Lu Wang for helpful conversations.

\section{Preliminaries} \label{sec: preliminaries}

\subsection{Causality and Entire hypersurfaces}

Minkowski $(n+1)$-space is the Lorentzian manifold $\R^{n,1} = (\R^n, dx_1^2 + \cdots + dx_n^2 - dx_{n+1}^2)$. We say that a vector is \emph{spacelike} if its square norm is positive, \emph{timelike} if its square norm is negative, and \emph{null} if its square norm is zero. A subspace of $\R^{n,1}$ is \emph{spacelike}, \emph{timelike}, or \emph{null} if the restriction of the inner product to it is Euclidean, Lorentzian, or degenerate respectively. We say a timelike or null vector is future oriented if its last coordinate is positive, and past oriented if it is negative. If $p \in \R^{n,1}$ we define the \emph{future}, $I^+(p)$, to be the set of points $p + v$ for $v$ a future oriented timelike vector, and similarly for the past, $I^-(p)$. If $X$ is a set in $\R^{n,1}$, define $I^\pm(X) = \cup_{p \in X} I^\pm(p)$. We also define the \emph{causal future} $J^+(p)$ and $J^+(X)$ the same way, except that we allow the vector $v$ to be timelike or null. Since the zero vector is null, $X \subset J^+(X)$.

A $C^0$ curve in $\R^{n,1}$ is \emph{causal} if each pair of points on it is timelike- or null-separated. A set is \emph{achronal} if each pair of points on it is spacelike- or null-separated. An achronal hypersurface will mean a connected $C^0$ hypersurface which is achronal. Note that a causal curve is locally the graph of a 1-Lipschitz function $\R \to \R^n$, where we decompose $\R^{n+1} = \R^n \oplus \R$, and an achronal surface is locally the graph of a 1-Lipschitz function $\R^n \to \R$. A causal curve is \emph{inextendable} if it is globally the graph of a 1-Lipschitz function, and an achronal surface is \emph{entire} if it is globally the graph of a 1-Lipschitz function. By a \emph{spacelike} hypersurface, we will mean a smooth hypersurface whose tangent space at each point is spacelike, so that it inherits a Riemannian metric. It is easy to verify that an entire spacelike hypersurface is achronal.

The following proposition implies that for a spacelike hypersurface, entire, properly embedded, and properly immersed are all equivalent.

\begin{prop}[{\cite[Proposition 1.10]{bon_smillie_seppi}}] \label{prop:prop embedded}
If a spacelike hypersurface is $\R^{n,1}$ is properly immersed, then it is entire.
\end{prop}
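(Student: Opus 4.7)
The plan is to show that the restriction to $\Sigma$ of the coordinate projection $\pi:\R^{n,1}\to\R^n$, $\pi(x_1,\ldots,x_{n+1})=(x_1,\ldots,x_n)$, is a global diffeomorphism. Writing $\Sigma$ as the image of a proper immersion $\sigma:M\to\R^{n,1}$ with $M$ a connected smooth $n$-manifold, I first note that at every $p\in M$ the tangent image $d\sigma(T_pM)=T_{\sigma(p)}\Sigma$ is spacelike and hence transverse to the timelike line $\R\partial_{n+1}=\ker d\pi$, so $d(\pi\circ\sigma)_p$ is an isomorphism: $\pi\circ\sigma$ is a local diffeomorphism. The goal is then to upgrade this to a covering map; since $\R^n$ is simply connected and $M$ is connected, $\pi\circ\sigma$ will automatically be a global diffeomorphism, realizing $\Sigma$ as the graph of a smooth $u:\R^n\to\R$ with $|Du|<1$, which is exactly the definition of entire.

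The key technical ingredient is a Lipschitz-type estimate extracted from the spacelike condition. For any $C^1$ path $\gamma$ in $M$, write $\sigma\circ\gamma(s)=(\alpha(s),t(s))$ with $\alpha=\pi\circ\sigma\circ\gamma$. The tangent $(\alpha'(s),t'(s))$ is spacelike, so $|t'(s)|<|\alpha'(s)|$; integrating,
\[|t(s_1)-t(s_0)|\;\leq\;\mathrm{length}\bigl(\alpha|_{[s_0,s_1]}\bigr).\]
This estimate, combined with the proper immersion hypothesis, will prevent lifts of paths in $\R^n$ from escaping to infinity along $M$.

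Concretely, fix $p_0\in M$ with $x_0=\pi(\sigma(p_0))$ and let $t_0=\sigma(p_0)_{n+1}$. Given $x_1\in\R^n$, I lift the straight segment $\alpha(s)=(1-s)x_0+sx_1$, $s\in[0,1]$, using the local diffeomorphism property, on a maximal half-open interval $[0,s^*)$. By the Lipschitz estimate, $\sigma\circ\gamma$ remains in the compact slab $\overline{B(x_0,|x_1-x_0|)}\times[t_0-|x_1-x_0|,\,t_0+|x_1-x_0|]$ of $\R^{n,1}$; properness of $\sigma$ yields a subsequential limit of $\gamma(s_n)$ in $M$ whenever $s_n\to s^*$, uniqueness of lifts extends $\gamma$ continuously to $s^*$, and the local diffeomorphism then extends $\gamma$ past $s^*$, contradicting maximality unless $s^*=1$. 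So every straight segment lifts, which immediately gives surjectivity of $\pi\circ\sigma$. Applied to straight segments inside an open ball $B\subset\R^n$, the same procedure evenly covers $B$: for each $p$ in the fiber over the center of $B$, lifting all segments from that center starting at $p$ yields an open set $V_p\subset M$ mapped diffeomorphically onto $B$, and these $V_p$'s form a disjoint open cover of $(\pi\circ\sigma)^{-1}(B)$. Thus $\pi\circ\sigma$ is a covering map and the proof concludes.

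The main obstacle is precisely the path-lifting step above: the spacelike hypothesis's direct consequence is only the Lipschitz estimate on the last coordinate along $C^1$ paths, and the argument hinges on combining it with properness to force each straight segment in $\R^n$ to admit a global lift in $M$. Without this estimate, nothing a priori prevents the lift from running off vertically in $\R^{n,1}$ before reaching $s=1$, so the interplay of the spacelike and proper conditions is exactly what makes the proposition work.
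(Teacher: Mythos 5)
Your argument is correct and is the standard ``developing map'' proof: show the projection $\pi\circ\sigma$ is a local diffeomorphism, establish a Lipschitz bound on the vertical coordinate from the spacelike condition, use properness to lift paths, and conclude $\pi\circ\sigma$ is a covering map onto the simply connected base $\R^n$. Note that the paper itself states this proposition with a citation to an external reference rather than proving it, so there is no in-paper proof to compare against; your approach is almost certainly the one used there.

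One point deserves more care than you give it. You say ``properness of $\sigma$ yields a subsequential limit of $\gamma(s_n)$ in $M$ whenever $s_n\to s^*$, uniqueness of lifts extends $\gamma$ continuously to $s^*$.'' A subsequential limit alone is not quite enough: since $\sigma$ need not be injective a priori, two different subsequences $\gamma(s_n)\to q$ and $\gamma(s'_n)\to q'$ could converge to distinct points with $\sigma(q)=\sigma(q')$, and continuity of $\gamma$ at $s^*$ is what you are trying to prove, not something you can invoke. To close this, pick a neighborhood $V\ni q$ with $\overline V$ compact and $\pi\circ\sigma|_{\overline V}$ injective; if $\gamma$ did not eventually stay in $V$, continuity of $\gamma$ on $[0,s^*)$ would produce a sequence $\gamma(s''_n)\in\partial V$, a subsequential limit $q''\in\partial V$ would satisfy $\pi(\sigma(q''))=\alpha(s^*)$, and injectivity of $\pi\circ\sigma$ on $\overline V$ would force $q''=q\in V$, contradicting $q''\in\partial V$. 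With that, $\gamma(s)\to q$ and the extension goes through. The rest of the write-up — the Lipschitz estimate, the compact slab, surjectivity, and the even-covering of balls — is fine; you may also wish to note explicitly at the end that $\pi\circ\sigma$ being a diffeomorphism makes $\sigma$ injective, and an injective proper immersion is an embedding, so $\Sigma$ really is the graph of a function $u$ with $|Du|<1$.
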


Another condition that implies entireness is completeness of the induced metric:

\begin{prop}[{\cite[Lemma 3.1]{Bonsante}}] \label{lemma complete implies entire} 
Let $\sigma:M^n\to \R^{n,1}$ be a spacelike immersion such that the first fundamental form is a complete Riemannian metric. Then $\sigma$ is an embedding and its image is an entire hypersurface.
\end{prop}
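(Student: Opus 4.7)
The plan is to project $\sigma$ down to $\R^n$ and show that the projection is a covering map. Let $\pi\colon \R^{n,1}\to\R^n$ be the projection onto the first $n$ coordinates and set $f=\pi\circ\sigma\colon M\to\R^n$. The spacelike hypothesis is equivalent to saying that for every nonzero $v\in T_pM$, if we write $d\sigma(v)=(v_1,\dots,v_{n+1})$, then $\sum_{i=1}^n v_i^2>v_{n+1}^2$; in particular $df$ is everywhere injective, so $f$ is a local diffeomorphism. The same computation yields
\[
|df(v)|^2_{\mathrm{eucl}}=\sum_{i=1}^n v_i^2\;\geq\;\sum_{i=1}^n v_i^2-v_{n+1}^2=|v|^2_{\sigma^*g},
\]
so $f$ is distance non-decreasing from $(M,\sigma^*g)$ to $(\R^n,g_{\mathrm{eucl}})$.

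The key step is then the classical fact that a distance non-decreasing local diffeomorphism from a complete Riemannian manifold to $\R^n$ is a covering map. Given $p\in M$ and a smooth path $\alpha\colon[0,1]\to\R^n$ with $\alpha(0)=f(p)$, I would lift $\alpha$ to $\tilde\alpha$ in $M$ starting at $p$ over a maximal subinterval $[0,T)$. The distance non-decreasing property bounds the $\sigma^*g$-length of $\tilde\alpha$ by the Euclidean length of $\alpha|_{[0,T]}$, which is finite; so $\tilde\alpha([0,T))$ lies inside a closed $\sigma^*g$-ball around $p$, which is compact by Hopf--Rinow. Hence $\tilde\alpha$ extends continuously to $t=T$, and since $f$ is a local diffeomorphism near $\tilde\alpha(T)$ one continues the lift slightly past $T$, contradicting maximality unless $T=1$. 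Path-lifting together with simple-connectedness of $\R^n$ then forces $f$ (restricted to each connected component of $M$) to be a diffeomorphism.

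Once $f=\pi\circ\sigma$ is a diffeomorphism, $\sigma$ is automatically injective, hence an embedding, and its image is the global graph of some smooth function $u\colon\R^n\to\R$; the spacelike condition translates to $|Du|<1$, which is exactly the definition of an entire hypersurface. The main obstacle in this plan is the sign bookkeeping: the Lorentzian $-v_{n+1}^2$ makes $\sigma^*g$ smaller than the pullback of the Euclidean metric under $\pi\circ\sigma$, which is precisely what makes the covering argument go through in the right direction. It is essential to project to $\R^n$ rather than work with $\sigma$ as a map into Minkowski space directly, since the indefinite Lorentzian norm could never serve as a comparison metric for a covering argument.
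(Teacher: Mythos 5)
Your proof is correct, and it is essentially the standard argument (the one given in the cited reference \cite{Bonsante}, Lemma 3.1): project to $\R^n$, observe that the spacelike condition makes the vertical projection an expanding local diffeomorphism with respect to the induced metric, and invoke the covering-map criterion for expanding maps from complete manifolds together with simple-connectedness of $\R^n$. One small presentational point: the version of the covering-map theorem you quote (e.g.\ Do Carmo, Ch.~8) usually assumes the local diffeomorphism is already surjective; your path-lifting sketch in fact establishes surjectivity for free (the image is open since $f$ is a local diffeomorphism, and closed because any boundary point is reached by a lift of finite length, which converges by completeness), so the argument is complete, but it is worth stating this explicitly rather than folding it silently into ``the classical fact.''
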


As mentioned in the introduction, the converse of this second proposition is not true without some curvature assumptions; it is easy to construct examples of entire spacelike hypersurfaces such that the induced metric is not complete.


\subsection{Domains of dependence and regular domains}

Our tool for understanding the asymptotics of entire spacelike hypersurfaces will be their domain of dependence. This gives a fairly coarse notion of asymptotics, but it turns out to be exactly what we need for the classification of entire CMC hypersurfaces. References for Propositions 1.5 and 1.6 can be found, with some adaptation, in Section 6.5 of \cite{Hawking:1973aa} or presented in a slightly different order in \cite{bon_smillie_seppi}.

\begin{defi} \label{def domain of dependence}
For an achronal set $\Sigma$ in $\R^{n,1}$, its \emph{domain of dependence} $\mathcal{D}(\Sigma)$ 
is the set of points $p \in\R^{n,1}$ such that every inextendable causal curve through $p$ meets $\Sigma$.
\end{defi}

\begin{defi}
An achronal set $\mathcal{H}$ is a \emph{past horizon} if for any point $p \in \mathcal{H}$, there is a future directed (hence nonzero) null vector $v$ such that $p + v$ is still in $\mathcal{H}$. 
\end{defi}

We note that the empty set is a past horizon. We furthermore define \emph{the} past horizon $\mathcal{H}^-(\Sigma)$ of an achronal set $\Sigma$ to be the part of the boundary of the domain of dependence of $\Sigma$ which lies in the past of $\Sigma$. The compatibility of this terminology is guaranteed by the following proposition.

\begin{prop} The past horizon $\mathcal{H}^-(\Sigma)$ of an achronal set $\Sigma$ is a past horizon. Moreover, every past horizon is the past horizon of itself.
\end{prop}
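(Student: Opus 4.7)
The plan is to handle the two assertions in sequence using deformation arguments with inextendable causal curves, each of which is the graph of a $1$-Lipschitz function $\R\to\R^n$. To show $\mathcal{H}^-(\Sigma)$ is achronal, I argue by contradiction: given $p,q\in\mathcal{H}^-(\Sigma)$ with $q\in I^+(p)$, pick a sequence $q_k\to q$ with $q_k\notin\mathcal{D}(\Sigma)$ and $q_k\in I^+(p)$ (using openness of $I^+(p)$), and for each $k$ an inextendable causal curve $\gamma_k$ through $q_k$ missing $\Sigma$. A timelike segment from $p$ to $q_k$ followed by the past half of $\gamma_k$, together with any inextendable future extension from $p$, yields an inextendable causal curve through $p$ avoiding $\Sigma$; the same construction applies to every point of a small neighborhood of $p$, contradicting $p\in\partial\mathcal{D}(\Sigma)$.

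To produce the future null generator at $p\in\mathcal{H}^-(\Sigma)$, take $p_k\to p$ with $p_k\notin\mathcal{D}(\Sigma)$ and let $\gamma_k$ be an inextendable causal curve through $p_k$ disjoint from $\Sigma$. Writing each $\gamma_k$ as a $1$-Lipschitz graph, Arzela-Ascoli yields a locally uniformly convergent subsequential limit $\gamma$, still an inextendable causal curve through $p$ and disjoint from the closure of $\Sigma$. Every point on the future branch $\gamma^+$ lying in $J^-(\Sigma)$ is a limit of points of $\gamma_k\cap J^-(\Sigma)$ outside $\mathcal{D}(\Sigma)$, so it belongs to $\mathcal{H}^-(\Sigma)$; by the achronality just established, no two such points can be strictly timelike-separated, so $\gamma^+$ is null until it meets $\Sigma$, furnishing the required generator.

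For the second assertion, let $\mathcal{H}$ be a past horizon; the goal is $\mathcal{H}=\mathcal{H}^-(\mathcal{H})$. For $\mathcal{H}\subset\mathcal{H}^-(\mathcal{H})$, trivially $\mathcal{H}\subset\mathcal{D}(\mathcal{H})\cap J^-(\mathcal{H})$, and for $p\in\mathcal{H}$ with null generator $p+tv\in\mathcal{H}$, perturbing $p$ by a null vector $w$ not parallel to $v$ gives a point $p'$ arbitrarily close to $p$ such that the inextendable null line in direction $v$ through $p'$ misses $\mathcal{H}$ by achronality, so $p'\notin\mathcal{D}(\mathcal{H})$, placing $p\in\partial\mathcal{D}(\mathcal{H})$. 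For the reverse $\mathcal{H}^-(\mathcal{H})\subset\mathcal{H}$, the first assertion applies to show that $\mathcal{H}^-(\mathcal{H})$ is itself a past horizon with a future null generator at each point; combining the null-ruled structures of both $\mathcal{H}$ and $\mathcal{H}^-(\mathcal{H})$ with the achronality of each, any $p\in\mathcal{H}^-(\mathcal{H})\setminus\mathcal{H}$ would yield a null ray that either meets $\mathcal{H}$ (creating a pair of horizon points in strict causal relation and violating achronality) or remains in $\mathcal{H}^-(\mathcal{H})$ while staying in $J^-(\mathcal{H})$, contradicting the boundary structure of $\mathcal{D}(\mathcal{H})$. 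I expect this last converse step to be the main obstacle, requiring a delicate interplay between the null generator structure from the first assertion and the achronality of $\mathcal{H}$ itself.
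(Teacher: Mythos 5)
The paper does not prove this proposition; it cites Section~6.5 of Hawking--Ellis and \cite{bon_smillie_seppi} instead. So your proposal must be judged on its own. Its overall strategy (limiting arguments with inextendable causal curves) is the standard one, but there are several genuine problems.

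\emph{Achronality of $\mathcal{H}^-(\Sigma)$.} The curve you describe is not causal as written. You concatenate ``a timelike segment from $p$ to $q_k$ followed by the past half of $\gamma_k$'' with ``an inextendable future extension from $p$.'' Since $q_k\in I^+(p)$, one can travel causally from $p$ up to $q_k$, but from $q_k$ one must then continue along the \emph{future} half of $\gamma_k$, not the past half; and what must be appended at $p$ is a past-inextendable tail ending at $p$, not a future extension. With these orientations corrected, the argument does go through (the segment and the past tail lie in $I^-(\Sigma)$ because $q_k$ and $p$ do, so they miss $\Sigma$ by achronality of $\Sigma$, and the future half of $\gamma_k$ misses $\Sigma$ by choice of $\gamma_k$), and the same construction excludes a whole neighborhood of $p$ from $\mathcal{D}(\Sigma)$. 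But the version you wrote does not assemble into a causal curve.

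\emph{Null generator at $p\in\mathcal{H}^-(\Sigma)$.} The claim that the limit curve $\gamma$ is ``disjoint from the closure of $\Sigma$'' is false in general: a locally uniform limit of curves avoiding $\Sigma$ may well touch or cross $\Sigma$ (what is true, and enough, is that $p\notin\overline{\Sigma}$, so a short initial piece of $\gamma^+$ misses $\Sigma$). More seriously, to conclude that a point $r$ on $\gamma^+$ lies in $\mathcal{H}^-(\Sigma)=\partial\mathcal{D}(\Sigma)\cap I^-(\Sigma)$, you show only that $r\notin\operatorname{int}\mathcal{D}(\Sigma)$ (since $r$ is a limit of $r_k\notin\mathcal{D}(\Sigma)$). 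You still need $r\in\overline{\mathcal{D}(\Sigma)}$, which does not follow from what you wrote; it requires an additional lemma of the form ``if $p\in\overline{\mathcal{D}(\Sigma)}\cap I^-(\Sigma)$ and $r\in I^+(p)\cap I^-(\Sigma)$ then $r\in\operatorname{int}\mathcal{D}(\Sigma)$,'' applied to points $p'\in\mathcal{D}(\Sigma)$ near $p$ and $r'\to r$. Once that lemma is in place, the same lemma also forces $r\notin I^+(p)$, yielding the nullity of $r-p$; your appeal to achronality for this is on the right track but the intermediate step is missing.

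\emph{$\mathcal{H}\subset\mathcal{H}^-(\mathcal{H})$.} This is where the argument actually breaks down. You claim that for $p'=p+\epsilon w$ (with $w$ null, not parallel to $v$), ``the inextendable null line in direction $v$ through $p'$ misses $\mathcal{H}$ by achronality.'' That is not correct. A direct computation of $\langle p'+tv-p,\ p'+tv-p\rangle=2\epsilon t\langle w,v\rangle$ and $\langle p'+tv-q,\ p'+tv-q\rangle=2\epsilon(t-1)\langle w,v\rangle$ (with $q=p+v$) shows that achronality relative to $p$ and $q$ only forbids the line from meeting $\mathcal{H}$ on a half-line of parameter values, never on all of $\mathbb{R}$; on the remaining half the separations are spacelike or null, which achronality permits. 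So the conclusion that $p'\notin\mathcal{D}(\mathcal{H})$ is unsupported. (Indeed, to exclude the entire null line one essentially needs the full ruling $\{p+tv:t\geq0\}\subset\mathcal{H}$, which is only guaranteed for \emph{entire} past horizons.)

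\emph{$\mathcal{H}^-(\mathcal{H})\subset\mathcal{H}$.} You acknowledge yourself that this step is not completed. The sketch as stated also misuses achronality: ``a pair of horizon points in strict causal relation'' does \emph{not} violate achronality if the relation is null rather than timelike -- null generators are precisely such pairs.

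In short, the plan resembles the standard Hawking--Ellis argument, but as written there is an orientation error in the first step, a missing lemma in the second, an incorrect appeal to achronality in the third, and an admitted gap in the fourth.
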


We define the future horizon $\mathcal{H}^+(\Sigma)$ analogously, but we will focus on past horizons in this paper. If $\Sigma$ is an entire achronal hypersurface, then its past horizon is either empty or is itself entire. We state some elementary properties of entire past horizons:

\begin{prop} \label{prop: entire past horizons}
Let $\mathcal{H}$ be an entire past horizon. Then
\begin{itemize}
\item $\mathcal{H}$ is convex.
\item If $p \in \mathcal{H}$ and $v$ is a future null vector such that $p + v \in \mathcal{H}$, then the entire geodesic ray $\{p + t v \,|\, t \in [0, \infty)\}$ is contained in $\mathcal{H}$.
\item $\mathcal{H}$ is the envelope of its null support planes.
\end{itemize}
\end{prop}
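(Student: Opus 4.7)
The plan is to establish items (2) and (3) directly and deduce (1) from (3). Throughout I write $\mathcal H$ as the graph of a $1$-Lipschitz function $u:\R^n\to\R$, using the splitting $\R^{n,1}=\R^n\oplus\R$.

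For (2), set $p=(x,u(x))$ and $v=(w,|w|)$, so that $p+v\in\mathcal H$ becomes $u(x+w)=u(x)+|w|$. Applying the Lipschitz bound to $[x,x+tw]$ and to $[x+tw,x+w]$ squeezes $u(x+tw)=u(x)+t|w|$ for every $t\in[0,1]$, giving the ray up to $t=1$. To extend further, apply the past horizon hypothesis at $p+v$ to find a future null $v'$ with $p+v+v'\in\mathcal H$; since $\langle v+v',v+v'\rangle=2\langle v,v'\rangle$ and two future null vectors satisfy $\langle v,v'\rangle\le 0$ with equality only when positively proportional, achronality of $\mathcal H$ forces $v'=\lambda v$ for some $\lambda>0$. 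This extends the ray to $[0,1+\lambda]$, and a sup argument combined with continuity of $u$ (i.e.\ closedness of $\mathcal H$) then extends it to all of $[0,+\infty)$.

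For (3), fix $p\in\mathcal H$ and, after rescaling, let $v=(w,1)$ with $|w|=1$ be the future null direction from part (2). Consider the affine null hyperplane
\[\Pi_p=\{(y,z)\in\R^{n,1}\,:\,z=u(x)+w\cdot(y-x)\},\]
which contains $\{p+tv : t\geq 0\}$ and is null because $(w,1)=v$ is a null tangent vector. I claim $\mathcal H$ lies on the closed future side of $\Pi_p$. For any $q=(y,u(y))\in\mathcal H$ and any $t\geq 0$, achronality applied to $q$ and $p+tv$ gives
\[|y-x-tw|^2\geq (u(y)-u(x)-t)^2.\]
The $t^2$ contributions cancel because $|w|^2=1$, and rearranging yields
\[|y-x|^2-(u(y)-u(x))^2\geq 2t\bigl(w\cdot(y-x)-(u(y)-u(x))\bigr)\]
for all $t\geq 0$. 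Letting $t\to+\infty$ forces $u(y)-u(x)\geq w\cdot(y-x)$, which is exactly the statement that $q$ lies on the future side of $\Pi_p$; hence $\Pi_p$ is a null support plane of $\mathcal H$ at $p$.

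Part (1) is then immediate: the inequality $u(y)\geq u(x)+w_x\cdot(y-x)$ produced in (3) at each $x$ exhibits $u$ as the pointwise supremum of affine minorants, hence $u$ is convex. Moreover the epigraph of $u$ equals the intersection of the closed future half-spaces bounded by the null support planes $\Pi_p$, so $\mathcal H$ is their envelope, simultaneously giving (1) and (3). The main technical obstacle I anticipate is the achronality-forces-parallelism step in (2), which prevents the null ray from turning as it is extended; everything else reduces to careful bookkeeping with the $1$-Lipschitz graph description.
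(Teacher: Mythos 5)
Your proof is correct. The paper itself does not prove this proposition but only cites Hawking--Ellis Section 6.5 and \cite{bon_smillie_seppi}, so there is no internal proof to compare against; your argument supplies a clean, self-contained one.

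All three steps check out. In part (2) the interpolation via the $1$-Lipschitz bound and the rigidity step (achronality of $\{p, p+v+v'\}$ forces $\langle v,v'\rangle=0$, hence $v'$ positively proportional to $v$) are exactly right, and the sup/closedness argument soundly pushes the ray to $[0,\infty)$. In part (3) the cancellation of the $t^2$ terms after applying achronality between $q$ and $p+tv$ is the crucial observation, and sending $t\to+\infty$ cleanly yields the support inequality $u(y)\geq u(x)+w\cdot(y-x)$; the plane $\Pi_p$ is indeed null since its tangent space $\{(\xi,w\cdot\xi)\}$ is degenerate along $\xi=w$. Part (1) then follows because $u$ is exhibited as the pointwise supremum of affine minorants, each attained. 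One thing worth making explicit when you write this up: in the sup argument for (2), at the supremum $T$ you first need closedness of $\mathcal{H}$ to place $p+Tv$ in $\mathcal{H}$, then the rigidity step and the Lipschitz interpolation to get past $T$, contradicting maximality. You gesture at this but it deserves a sentence. Overall, the route through the graph description and elementary achronality estimates is natural, elementary, and fully rigorous.
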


Since $\mathcal{H}$ is a convex graph, it is determined by its locus of support planes. If $\mathcal{H}$ is the graph of $f$, the locus of support planes is described by the Legendre transform $f^*: \R^n \to \R \cup \{+\infty\}$, defined by $f^*(\mathbf y) = \sup_{\mathbf y \in \R^n} \mathbf x \cdot \mathbf y - f(\mathbf y)$. In general, the Legendre transform $f^*$ is a lower semicontinuous function which may take the value $+ \infty$. The third point of the proposition says that $\mathcal{H}$ is determined by the restriction of $f^*$ to the unit sphere, which corresponds to the null support planes. We summarize this as:

\begin{prop} \label{prop: support function}
Past horizons are in bijection with lower semicontinous functions on the sphere taking values in $\R \cup \{ + \infty \}$.
\end{prop}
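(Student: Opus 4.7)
The plan is to construct the bijection explicitly via the Legendre transform, using the second and third bullets of Proposition~\ref{prop: entire past horizons} as the main inputs. For the forward direction, given a non-empty entire past horizon $\mathcal H$, write it as the graph of a $1$-Lipschitz convex function $f\colon\R^n\to\R$ and set $\varphi:=f^*|_{\mathbb S^{n-1}}$, where $f^*(\mathbf y)=\sup_{\mathbf x}(\mathbf x\cdot\mathbf y-f(\mathbf x))$ is the Legendre transform. As a supremum of affine functions, $f^*$ is convex and lower semicontinuous on $\R^n$ with values in $\R\cup\{+\infty\}$, and the same holds for its restriction to the sphere. The empty horizon is assigned $\varphi\equiv+\infty$.

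For the inverse map, given a lower semicontinuous $\varphi\colon\mathbb S^{n-1}\to\R\cup\{+\infty\}$ not identically $+\infty$, define
\[
f(\mathbf x)\;:=\;\sup_{\mathbf y\in\mathbb S^{n-1}}\bigl(\mathbf x\cdot\mathbf y-\varphi(\mathbf y)\bigr),
\]
which is finite because $-\varphi$ is bounded above on the compact sphere. As a supremum of $1$-Lipschitz affine functions, $f$ is $1$-Lipschitz and convex, so its graph is an achronal convex hypersurface. To verify the past horizon condition, fix $\mathbf x$: the map $\mathbf y\mapsto\mathbf x\cdot\mathbf y-\varphi(\mathbf y)$ is upper semicontinuous on the compact sphere and thus attains its supremum at some $\mathbf y_0$. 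Then $v=(\mathbf y_0,1)$ is future null, and since $|\mathbf y_0|=1$,
\[
f(\mathbf x+t\mathbf y_0)\;\geq\;(\mathbf x+t\mathbf y_0)\cdot\mathbf y_0-\varphi(\mathbf y_0)\;=\;f(\mathbf x)+t\qquad(t\geq 0),
\]
while the $1$-Lipschitz property yields the reverse inequality. Hence the whole null ray through $(\mathbf x,f(\mathbf x))$ in direction $(\mathbf y_0,1)$ lies on the graph.

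The final step is to check the two constructions are mutually inverse. Starting from $\mathcal H=\operatorname{graph}(f)$ and producing $\varphi=f^*|_{\mathbb S^{n-1}}$, recovering $f$ as $\mathbf x\mapsto\sup_{\mathbf y\in\mathbb S^{n-1}}(\mathbf x\cdot\mathbf y-\varphi(\mathbf y))$ is a direct restatement of the third bullet of Proposition~\ref{prop: entire past horizons}. Conversely, starting from $\varphi$, constructing $f$, and evaluating $f^*$ at $\mathbf y_0\in\mathbb S^{n-1}$, the inequality $f^*(\mathbf y_0)\leq\varphi(\mathbf y_0)$ is immediate from $f(\mathbf x)\geq\mathbf x\cdot\mathbf y_0-\varphi(\mathbf y_0)$. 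For the reverse inequality, evaluating along the ray $\mathbf x=t\mathbf y_0$ gives
\[
f^*(\mathbf y_0)\;\geq\;t-f(t\mathbf y_0)\;=\;\inf_{\mathbf y\in\mathbb S^{n-1}}\bigl(\varphi(\mathbf y)+t(1-\mathbf y_0\cdot\mathbf y)\bigr),
\]
and for any $\varepsilon>0$, lower semicontinuity produces a neighborhood $U$ of $\mathbf y_0$ with $\varphi>\varphi(\mathbf y_0)-\varepsilon$ on $U$, while on $\mathbb S^{n-1}\setminus U$ the factor $1-\mathbf y_0\cdot\mathbf y$ is bounded below by some $c>0$; for $t$ large, the infimum is therefore realized in $U$ and is at least $\varphi(\mathbf y_0)-\varepsilon$. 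Letting $t\to+\infty$ and $\varepsilon\to 0$ gives $f^*(\mathbf y_0)\geq\varphi(\mathbf y_0)$.

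I expect the main subtlety to be precisely the two places where lower semicontinuity, rather than continuity, of $\varphi$ is used — verifying the horizon condition at every point, and recovering $\varphi$ at its points of discontinuity in the second composition. Both arguments rely on the compactness of $\mathbb S^{n-1}$ in an essential way, and they are exactly what would fail for arbitrary $1$-Lipschitz convex functions on $\R^n$ whose support directions need not include any null ones.
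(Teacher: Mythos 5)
Your construction via the Legendre transform restricted to $\mathbb S^{n-1}$ is exactly the approach the paper takes: the paper states Proposition \ref{prop: support function} as a summary of the preceding discussion, in which the third bullet of Proposition \ref{prop: entire past horizons} gives injectivity. Your proposal is correct, and usefully supplies the surjectivity direction and the two lower-semicontinuity-plus-compactness arguments (producing the null ray through each point of the constructed graph, and recovering $\varphi=f^*|_{\mathbb S^{n-1}}$) that the paper leaves implicit.
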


This lower semi-continuous function is called the \emph{null support function} of the past horizon. In fact, one may speak of the null support function of any entire achronal set, meaning simply the null support function of its past horizon. 

If $\Sigma$ is a spacelike hypersurface, then the following proposition states that its domain of dependence is really a domain (i.e. it is open).

\begin{prop}{\cite[Lemma 1.14]{bon_smillie_seppi}} \label{prop: D is open} If $\Sigma$ is a spacelike hypersurface, then 
\begin{itemize}
\item For any $p \in \mathcal{D}(\Sigma)$, there is a compact subset $K \subset \Sigma$ such that $p \in \mathcal{D}(K)$.
\item $\mathcal{D}(\Sigma)$ is open.
\end{itemize}
\end{prop}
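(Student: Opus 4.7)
My approach is to prove the first bullet directly by constructing $K$ as the set $\Sigma \cap J(p)$, and then to deduce the second bullet by a compactness-plus-limit argument based on a slight enlargement of this $K$.

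For the first bullet, fix $p \in \mathcal{D}(\Sigma)$ (the case $p \in \Sigma$ is trivial with $K=\{p\}$, so assume $p \notin \Sigma$) and set $K := \Sigma \cap J(p)$ with $J(p) := J^+(p) \cup J^-(p)$. I claim $K$ is compact in $\Sigma$. Let $S^{\mathrm{fc}}$ denote the set of future causal vectors of unit Euclidean length; this is a compact, connected, closed hemisphere. For each $v \in S^{\mathrm{fc}}$ the straight line $L_v := \{p + tv : t \in \R\}$ is an inextendable causal curve through $p$, so by hypothesis it meets $\Sigma$. Since $\Sigma$ is spacelike while $L_v$ is causal, this intersection is transverse; and by achronality it reduces to a single point $f(v)$ (two intersections would be causally related along $L_v$, and in the null case one also uses that an entire spacelike hypersurface admits no null-separated pairs of points, via a mean value argument on its $1$-Lipschitz defining function). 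The implicit function theorem then makes $f : S^{\mathrm{fc}} \to \Sigma$ continuous, so its image is compact. Because any $q \in \Sigma \cap J(p) \setminus \{p\}$ lies on $L_v$ for $v = \pm(q-p)/\|q-p\|_E \in S^{\mathrm{fc}}$, this image is exactly $K$. Finally $p \in \mathcal{D}(K)$: any inextendable causal curve through $p$ lies in $J(p)$ and meets $\Sigma$, hence meets $\Sigma \cap J(p) = K$.

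For the second bullet, choose $K'$ compact with $K \subset \mathrm{int}_\Sigma K' \subset K' \subset \Sigma$ (possible since $\Sigma$ is a manifold). I claim a full $\R^{n,1}$-neighborhood of $p$ is contained in $\mathcal{D}(K') \subset \mathcal{D}(\Sigma)$, proved by contradiction. Suppose $p_k \to p$ and $p_k \notin \mathcal{D}(K')$, realized by inextendable causal curves $\gamma_k$ through $p_k$ with $\gamma_k \cap K' = \emptyset$. Every causal curve in $\R^{n,1}$ reparametrizes as the graph over the $x_{n+1}$-axis of a $1$-Lipschitz function $\R \to \R^n$; so Arzel\`a--Ascoli yields a subsequential uniform-on-compacts limit which is an inextendable causal curve $\gamma_\infty$ through $p$. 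By the first bullet $\gamma_\infty$ meets $K$ at some $q \in \mathrm{int}_\Sigma K'$. Around $q$, $\Sigma$ is a local spacelike graph that separates some small open box $U \ni q$ into a past and a future component, with $\Sigma \cap U \subset K'$. The causal curve $\gamma_\infty$ enters $U$ from the past component and leaves through the future component; uniform convergence forces $\gamma_k$ to do the same for all large $k$, and so $\gamma_k$ must meet $\Sigma \cap U \subset K'$, contradicting $\gamma_k \cap K' = \emptyset$.

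The main technical point I expect to require care is formalizing this topological crossing argument at the end: since the $\gamma_k$ are merely $1$-Lipschitz and we have only $C^0$ convergence, one cannot invoke smooth transversality, but must instead use that a spacelike hypersurface locally separates its ambient space into a past and a future half and that any inextendable causal curve passing from one to the other must cross the hypersurface, a property which is stable under uniform $C^0$ limits.
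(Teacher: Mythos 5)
The paper does not prove this proposition --- it is quoted from an external reference --- so there is no in-text argument to compare with; I evaluate your proof on its own merits. Your strategy (compactness of $K := \Sigma \cap J(p)$ via the continuous map from the compact set of future causal unit directions, then openness via an Arzel\`a--Ascoli limiting argument combined with a topological crossing step) is sound, and your proof of the second bullet is correct, including the local-separation-plus-$C^0$-stability step you flag as the delicate point.

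There is, however, a gap in the first bullet. To show that a causal line $L_v$ through $p$ meets $\Sigma$ in exactly one point, you dispose of the timelike case by achronality, but for the null case you invoke the fact that an \emph{entire} spacelike hypersurface admits no null-separated pair of points, via the mean value theorem on its globally defined strictly $1$-Lipschitz function. The proposition is stated for an arbitrary spacelike hypersurface (implicitly achronal, so that $\mathcal{D}(\Sigma)$ is defined), which need not be entire: $\Sigma$ could be a strictly $1$-Lipschitz graph over a non-convex proper open subset of $\R^n$, in which case the straight segment joining the spatial projections of two points of $\Sigma$ need not lie in the domain of the graph function, and the mean value argument does not apply. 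A fix that works in general is a short causality argument: if $L_v$ meets $\Sigma$ at $q_1$ and then again at a later point $q_2$, then for small $\epsilon > 0$ the point $q_1 + \epsilon v$ lies strictly above the local graph of $\Sigma$ near $q_1$ (since $|Df|<1$ locally), hence $q_1 + \epsilon v \in I^+(q')$ for some $q' \in \Sigma$ near $q_1$; then $q_2 \in J^+(q_1 + \epsilon v) \subset I^+(q')$, so $q_2$ and $q'$ are two timelike-separated points of $\Sigma$, contradicting achronality. With this replacement, your argument goes through as written.
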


If $\Sigma$ is an entire spacelike hypersurface, then $\mathcal{H}^{\pm}$ are entire or empty, and it follows from the proposition that $\mathcal{D}(\Sigma)$ is the open region between them. A case of particular interest is when $\Sigma$ is an entire convex spacelike surface. For us, a convex hypersurface will always mean one that is the graph of a convex function (in particular, the past connected component of the two sheeted hyperboloid is not called convex). For a convex hypersurface $\Sigma$, it is not hard to see that its future horizon is empty, so that $\mathcal{D}(\Sigma) = I^+(\mathcal{H}^-(\Sigma))$ if $\mathcal{H}^-(\Sigma)$ is nonempty, and $\mathcal{D}(\Sigma) = \R^{n,1}$ otherwise.

Having broken the time reversal symmetry, we make the following definition.
\begin{defi} \label{def: regular domain}
A \emph{regular domain} is an open domain which is the future of an entire past horizon with at least one spacelike support plane.
\end{defi}
Equivalently, it must have at least two non-parallel null support planes. Under the correspondence between entire past horizons and lower semicontinous functions on the sphere, this just excludes the function that is identically equal to $+ \infty$ and functions which are finite at a single point. 

A regular domain has an important canonical function on it called cosmological time, which we now discuss.

\begin{defi} \label{def: Lorentzian distance} For $q$ in the causal future of $p$ (written $q \in J^+(p)$), define the Lorentzian distance $d(p,q) = \sqrt{-\langle q-p, q-p\rangle}$. More generally, if $\Sigma_1$ and $\Sigma_2$ are two achronal sets such that there exists at least one future directed causal geodesic from a point in $\Sigma_1$ to a point in $\Sigma_2$, define the Lorentzian distance, which may be infinite, by
\[d(\Sigma_1, \Sigma_2) = \sup_{\substack{p \in \Sigma_1 \\ q \in J^+(p) \cap \Sigma_2}} d(p,q)~.\]
\end{defi}

If $\mathcal{D}$ is a regular domain with past boundary $\mathcal{H}$, the \emph{cosmological time} is a function $T$ defined on $\mathcal{D}$ by $T(p) = d(\mathcal{H}, p)$. The assumption that $\mathcal{H}$ has at least one spacelike support plane guarantees that $T$ is finite. More generally, we have

\begin{prop} [{\cite[Proposition 4.3 and Corollary 4.4]{Bonsante}}] \label{prop:T level sets}
Let $\Sigma$ be a convex entire achronal hypersurface with at least one spacelike support plane.
\begin{itemize}
\item The function $T(p) = d(\Sigma, p)$ is a $C^1$ function on $I^+(\Sigma)$.
\item The function $T$ tends to zero as $p$ approaches $\Sigma$. It is convex and unbounded along any timelike geodesic.
\item The level sets $T^{-1}(r)$ for $r > 0$ are convex entire spacelike hypersurfaces, each of which has the same domain of dependence as $\Sigma$.
\end{itemize}
\end{prop}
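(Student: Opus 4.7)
The approach is to encode everything via the \emph{foot map} $q : I^+(\Sigma) \to \Sigma$ that assigns to each $p$ the point of $\Sigma$ realizing the supremum in $T(p)$, and then to extract every stated property from the identity $\nabla T(p) = -(p-q(p))/T(p)$. For existence of the foot, the spacelike support plane $\Pi$ forces $\Sigma \subset J^+(\Pi)$, so $\Sigma \cap J^-(p)$ is contained in the compact lens $J^+(\Pi)\cap J^-(p)$, on which $d(p,\cdot)$ is upper semicontinuous and therefore attains its supremum. For uniqueness, I would argue by contradiction: two optimal feet $q_1 \ne q_2$ produce non-parallel future-timelike vectors $p-q_1,\, p-q_2$ of equal Lorentzian norm $T(p)$, and strict reverse Cauchy--Schwarz applied to them shows $d(p,m) > T(p)$ for their Minkowski midpoint $m=(q_1+q_2)/2$; convexity of the epigraph of $\Sigma$ then produces a point $q' \in \Sigma$ directly below $m$ in the time direction with $d(p,q') \ge d(p,m) > T(p)$, contradicting the definition of $T$.

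A standard compactness argument combined with uniqueness shows $p \mapsto q(p)$ is continuous. Once continuity is in hand, for each $p_0 \in I^+(\Sigma)$ the smooth function $p \mapsto d(p,q(p_0))$ lies below $T$ and agrees with it at $p_0$, so by an envelope argument $T$ is differentiable at $p_0$ with $\nabla T(p_0) = -(p_0-q(p_0))/T(p_0)$, a unit past-pointing timelike vector; continuity of $q(\cdot)$ then delivers $T \in C^1$. As $p \to \bar p \in \Sigma$ through $I^+(\Sigma)$, any accumulation point of $q(p_n)$ must lie in the singleton $\Sigma \cap J^-(\bar p) \cap J^+(\bar p) = \{\bar p\}$ by achronality of $\Sigma$, whence $T(p) \to 0$. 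Along a future-timelike geodesic $\gamma$, the reverse triangle inequality $T(\gamma(s_2)) \geq T(\gamma(s_1)) + d(\gamma(s_1),\gamma(s_2))$ gives unbounded linear growth, while the convexity statement reduces to a direct second-derivative computation using the formula for $\nabla T$ and the variational identity that $p-q(p)$ is Lorentz-orthogonal to $T_{q(p)}\Sigma$.

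Because $\nabla T$ is unit timelike everywhere, $T^{-1}(r)$ is a smooth spacelike hypersurface for each $r>0$, and it is entire since $T$ takes every positive value along each future-inextendible timelike geodesic in $I^+(\Sigma)$. Its convexity comes from the normal-offset description $q \mapsto q + r\nu(q)$ at smooth points of $\Sigma$ and from approximation elsewhere. Equality of domains of dependence is cleanest at the level of null support functions: this datum is asymptotic at null infinity and is therefore insensitive to the bounded Lorentzian offset that produces $T^{-1}(r)$ from $\Sigma$, so the two hypersurfaces share the same past horizon and hence the same domain of dependence.

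The main obstacle is the interplay between the regularity of the foot map and the fact that $\Sigma$ is only assumed $C^0$ achronal and convex: one cannot a priori expect $q(\cdot)$ to be better than continuous. The convexity computation along timelike geodesics and the smoothness of the level sets both really demand upgrading this to $C^{1,1}$ regularity of $T$, which is obtained from semiconcavity arguments exploiting the anti-Pythagorean Hessian inequalities for Lorentzian distance functions in Minkowski space; this is essentially the technical content of Proposition 4.3 and Corollary 4.4 of \cite{Bonsante}.
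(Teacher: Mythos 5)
The paper does not prove this proposition; it cites it verbatim from Bonsante's earlier work, so there is no in-paper argument to compare against. That said, your foot-map/envelope strategy is the standard and correct one for Lorentzian distance functions, and most of the sketch is sound: the compactness argument for existence of the foot using the spacelike support plane, the strict reverse Cauchy--Schwarz argument for uniqueness using convexity of $\Sigma$, the Danskin-type envelope argument for $C^1$ regularity, the argument that $T$ vanishes near $\Sigma$, and the reverse-triangle-inequality argument for unbounded (indeed linear) growth are all essentially correct.

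The genuine problem is the convexity claim. You defer it to ``a direct second-derivative computation,'' but if you actually carry out that computation in the simplest model case you find the \emph{opposite} sign. Take $\Sigma$ to be the future light cone of the origin; then $T(p) = d(0,p)$, and along the timelike line $\gamma(s) = (a,0,\dots,0,s)$ one has $T(\gamma(s)) = \sqrt{s^2 - a^2}$, with $T''(s) = -a^2 (s^2 - a^2)^{-3/2} < 0$. So $T$ is \emph{concave}, not convex, along timelike geodesics. This is in fact the sign one should expect: concavity of $T$ is what makes the superlevel sets $\{T \ge r\} = J^+(T^{-1}(r))$ convex, i.e. what makes the level hypersurfaces $T^{-1}(r)$ convex as stated in the third bullet. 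So the word ``convex'' in the paper's second bullet appears to be a slip for ``concave,'' and your sketch should have surfaced this the moment you tried to compute. As written, the ``direct second-derivative computation'' would fail to establish what the statement claims and you would be left stuck or, worse, proving something false.

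Two smaller gaps worth flagging. First, your normal-offset description $q \mapsto q + r\nu(q)$ of $T^{-1}(r)$ and the Lorentz-orthogonality identity $(p - q(p)) \perp T_{q(p)}\Sigma$ presuppose differentiability of $\Sigma$ at the foot, which a $C^0$ convex achronal hypersurface need not have; the ``approximation elsewhere'' you invoke is precisely where the semiconcavity/$C^{1,1}$ machinery of Bonsante's Proposition 4.3 earns its keep, and it cannot be waved away. Second, the equal-domain-of-dependence claim is cleanest if stated as: $\Sigma$ and $T^{-1}(r)$ have the \emph{same} null support planes (for a convex surface, the null support planes of a level set of the Lorentzian distance function are unchanged, as one sees already for the hyperboloid inside the light cone), hence the same past horizon, hence the same regular domain. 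Your appeal to ``bounded Lorentzian offset'' points in the right direction but does not substitute for this argument.
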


This proposition applies in particular to the case where $\Sigma = \mathcal{H}$ is a past horizon, $I^+(\mathcal{H})$ is a regular domain, and $T$ is the cosmological time.

\subsection{CMC hypersurfaces}

Any spacelike hypersurface $\Sigma$ has a future unit normal vector field which we will call $\nu$. Parallel transporting the vector field $\nu$ to the origin gives the \emph{Gauss map} $G: \Sigma \to \Hyp^n$, where $\Hyp^n$ is identified with the component of the hyperboloid of future unit timelike vectors.

The shape operator of $\Sigma$ is denoted $B = d\nu$, viewed as an endomorphism of the tangent bundle, and we define the mean curvature with the convention
\[
H = \frac{1}{n} \mathrm{tr}B~.
\]
We will use the notation $\I, \II, \III$ for the first, second, and third fundamental forms: $I$ is the induced metric, $\II(\cdot, \cdot) = \I(B\cdot, \cdot)$, and $\III(\cdot, \cdot) = \I(B \cdot, B \cdot)$.

The following classical theorem holds in $\R^{n,1}$ just as in Euclidean space:
\begin{theorem}[see {\cite[Theorem 1.2]{choitreibergs}}] \label{lemma harmonic}
Let $\Sigma$ be a spacelike hypersurface in $\R^{n,1}$ and let $\I$ be its first fundamental form. Then the Gauss map $G:(\Sigma,\I)\to \Hyp^n$ is harmonic if and only if $\Sigma$ has constant mean curvature.
\end{theorem}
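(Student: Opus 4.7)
The approach I would take is the Minkowski analogue of the classical Ruh--Vilms theorem. The heart of the matter is computing the tension field of $G$ and showing it equals $n \nabla H$, so that harmonicity of $G$ (vanishing of the tension field) is equivalent to $\nabla H = 0$, that is, $H$ constant.

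The first observation is a canonical identification: at $p \in \Sigma$, we have $T_p \Sigma = \nu(p)^\perp$ (orthogonal with respect to the Minkowski inner product, since $\nu$ is the Minkowski-unit normal), and $T_{\nu(p)} \Hyp^n$ is also $\nu(p)^\perp$, since $\Hyp^n$ sits inside $\R^{n,1}$ as the hyperboloid of future unit timelike vectors. Thus $dG(X) = \nabla^{\R^{n,1}}_X \nu = B(X)$ lies naturally in both tangent spaces, with no distinction. Moreover, the Levi-Civita connection of $\Hyp^n$ is obtained from $\nabla^{\R^{n,1}}$ by projecting onto $\nu^\perp$; since $T_p \Sigma = \nu(p)^\perp$, this is identical to how we obtain the Levi-Civita connection of $(\Sigma, \I)$ from $\nabla^{\R^{n,1}}$.

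Next I would compute the tension field. Pick $p \in \Sigma$ and an $\I$-orthonormal frame $\{e_i\}$ at $p$ with $\nabla^\Sigma_{e_i} e_i = 0$ at $p$. Then, using the Gauss formula $\nabla^{\R^{n,1}}_X Y = \nabla^\Sigma_X Y + \II(X,Y)\,\nu$ and projecting to $\nu^\perp$, one gets
\[
\tau(G)(p) = \sum_i \bigl( \nabla^{\R^{n,1}}_{e_i} B(e_i) \bigr)^{\nu^\perp} - dG(\nabla^\Sigma_{e_i} e_i) = \sum_i (\nabla^\Sigma_{e_i} B)(e_i)~.
\]
The key algebraic step is then to apply the Codazzi equation for $\Sigma$ in the flat ambient space $\R^{n,1}$, which says $(\nabla_X B)(Y) = (\nabla_Y B)(X)$. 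Pairing $\tau(G)$ with an arbitrary tangent vector $Y$ and using the symmetry of the endomorphism $\nabla_Y B$ together with Codazzi yields
\[
\I(\tau(G), Y) = \sum_i \I\bigl((\nabla_{e_i} B)(e_i), Y\bigr) = \sum_i \I\bigl(e_i, (\nabla_Y B)(e_i)\bigr) = \mathrm{tr}(\nabla_Y B) = Y(\mathrm{tr}\, B) = n\, Y(H)~.
\]
So $\tau(G) = n\,\nabla H$, and the theorem follows immediately: $G$ is harmonic if and only if $\nabla H \equiv 0$ on $\Sigma$, i.e.\ $H$ is constant.

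There is no real obstacle here; the only subtle point worth being careful about is bookkeeping signs in the Lorentzian setting when projecting $\nabla^{\R^{n,1}}$ onto the two-fold role of $\nu^\perp$ (both as $T\Sigma$ and as $T\Hyp^n$). Once that identification is made cleanly, the Gauss formula, the flatness of $\R^{n,1}$ (which makes Codazzi take the simple form above), and the symmetry of the shape operator do all the work.
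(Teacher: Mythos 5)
Your argument is correct and is the standard Ruh--Vilms computation adapted to the Lorentzian setting. The paper gives no proof of its own for this statement --- it simply cites \cite[Theorem 1.2]{choitreibergs} --- and the proof there proceeds exactly as you describe: identify $T_p\Sigma$ and $T_{G(p)}\Hyp^n$ with $\nu(p)^\perp$, use the Gauss formula to reduce the tension field to $\tau(G)=\sum_i(\nabla_{e_i}B)(e_i)$, and then invoke the Codazzi equation together with the self-adjointness of $B$ (and hence of $\nabla_X B$) to obtain $\tau(G)=n\,\nabla H$.
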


The foundational result about spacelike hypersurfaces with constant mean curvature in Minkowski space is the following:

\begin{theorem}[\cite{chengyau}] \label{thm: Cheng-Yau}
If $\Sigma \subset \R^{n,1}$ is an entire spacelike hypersurface with constant mean curvature then $\Sigma$ is intrinsically complete with non-positive Ricci curvature.
\end{theorem}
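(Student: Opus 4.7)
The plan is to deduce both conclusions from an intermediate structural result, namely the convexity of $\Sigma$ (up to reversing time orientation). I would organize the argument in three steps.

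\textbf{Step 1 (Convexity).} I would first show that after possibly applying a time-reversing isometry, $\Sigma$ is the graph of a convex function $u:\R^n\to\R$. Writing the CMC equation in divergence form $\mathrm{div}\bigl(Du/\sqrt{1-|Du|^2}\bigr)=nH$, one obtains a quasilinear elliptic PDE that is uniformly elliptic on sets where $|Du|$ is bounded away from $1$. The tensorial identity of Simons-type applied to the shape operator $B$ of an entire CMC hypersurface yields a linear elliptic differential inequality satisfied by the smallest eigenvalue of $B$; combined with a maximum principle at infinity, this forces all principal curvatures to have the same sign, and one fixes their sign to be nonnegative by orienting so that $H>0$.

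\textbf{Step 2 (Ricci bound).} Given convexity, the Ricci bound is immediate from the Gauss equation. In flat Lorentzian ambient space with timelike unit normal $\langle\nu,\nu\rangle=-1$, the Gauss equation acquires a sign change relative to the Riemannian case:
\[ R(X,Y,Z,W) \;=\; -\bigl(\I(BX,Z)\I(BY,W)-\I(BX,W)\I(BY,Z)\bigr). \]
Tracing in a principal orthonormal frame with eigenvalues $\lambda_1,\ldots,\lambda_n$ summing to $nH$ gives $\mathrm{Ric}(e_i,e_i)=\lambda_i^2-nH\lambda_i=-\lambda_i\sum_{j\ne i}\lambda_j$. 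Step 1 ensures $\lambda_i\ge 0$, so each such expression is $\le 0$.

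\textbf{Step 3 (Completeness).} This is the main obstacle. The induced metric $\I=(\delta_{ij}-u_iu_j)\,dx^i\,dx^j$ is complete if and only if $|Du|$ does not approach $1$ too fast along curves escaping $\Sigma$. I would set up a Cheng-Yau style gradient estimate: apply the maximum principle on $\Sigma$ to an auxiliary quantity of the form $\log\bigl(1/\sqrt{1-|Du|^2}\bigr)+\eta$, where $\eta$ is a cutoff or a geodesic-distance-like barrier adapted to the convexity of $\Sigma$. A Bochner computation combined with the CMC equation and the Ricci lower bound (a consequence of Step 1, since $\lambda_i\le nH$ makes the Ricci bounded below by $-n^2H^2/4$) produces a differential inequality for this quantity, from which one extracts a quantitative bound on how fast $|Du|$ may approach $1$. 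This bound integrates to give infinite intrinsic length along any escaping curve. An alternative route is a barrier argument: use convexity to sandwich $\Sigma$ between translates of the hyperboloid of mean curvature $H$, whose completeness is known, and transfer the lower bound on intrinsic distance.

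The main difficulty is Step 3: convexity and the CMC equation each separately allow $|Du|\to 1$ along sequences, and the delicate point is to combine them to get a quantitative rate compatible with completeness. The Bochner-type identity, where Step 2's Ricci bound enters as a uniform lower curvature bound needed to close the maximum principle argument, is the technical crux I would expect to absorb most of the work.
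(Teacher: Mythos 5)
The paper does not prove this theorem: it is cited from \cite{chengyau} and used as a black box. The only proof-like content the paper offers is the short post-theorem discussion deriving, from the Gauss equation $\mathrm{Ric}_a^b = -(nHB_a^b - B^b_cB^c_a)$, that nonpositive Ricci is equivalent to nonnegativity of the principal curvatures --- which is precisely your Step 2. So your Step 2 recovers the paper's remark; Steps 1 and 3 are an attempt to re-derive the cited Cheng--Yau result itself, which the paper never does.

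That attempted re-derivation has a genuine circularity. Your Step 1 invokes a ``maximum principle at infinity'' on a Simons-type differential inequality for the shape operator. On a noncompact hypersurface the tools that make such an argument rigorous are either the Omori--Yau maximum principle (which requires completeness and a Ricci lower bound) or an a priori bound on $|\II|$; you have neither at that point. Completeness only appears in your Step 3, and your Step 3 explicitly feeds on the Ricci lower bound coming from Steps 1--2 (``$\lambda_i \le nH$ makes the Ricci bounded below''). So Step 1 needs Step 3, and Step 3 needs Step 1. The actual Cheng--Yau argument resolves this by running in the opposite order: the gradient estimate is an interior PDE estimate on the tilt function valid on any spacelike graph with $C^1$-bounded mean curvature (as the paper's remark after the theorem emphasizes --- completeness holds without constancy of $H$, hence without convexity), and it produces a bound on $|\II|$ first. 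That bound then yields completeness and the Ricci lower bound, after which Omori--Yau plus the Simons identity gives semidefiniteness of $\II$. Your alternative route in Step 3 (sandwiching between translated hyperboloids) also does not work: intrinsic distance bounds do not transfer between hypersurfaces that are merely causally or Euclideanly sandwiched, and in any case a convex CMC hypersurface such as a trough is not sandwiched between hyperboloids.
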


Two comments about this theorem are in order. First, on the question of completeness, the result of Cheng and Yau is somewhat stronger: if instead of constant mean curvature we assume only a $C^1$ bound on the mean curvature function, then $\Sigma$ is still complete. Second, non-positive Ricci curvature is equivalent to convexity, as we now explain.

The Gauss equation for a spacelike hypersurface in $\R^{n,1}$ with second fundamental form $\II$ reads
\[
\mathrm{R}_{abcd} = -(\II_{ac}\II_{bd} - \II_{ad}\II_{bc})~,
\]
and tracing once, with $H = \mathrm{tr}(\II)/n$, gives either of the equivalent equations
\begin{align*}
\mathrm{Ric}_{ab}&= -(nH\II_{ab} - \III_{ab}) \\
\mathrm{Ric}_a^b &= -(nHB_a^b - B^b_cB^c_a)~.
\end{align*}
This shows that the second fundamental form and the Ricci tensor are simultaneously diagonalizable. Moreover, if $\lambda$ is an eigenvalue of $B$, the corresponding eigenvalue $\mu$ of $\mathrm{Ric}$ is given by
\[
\mu = \lambda^2 - n H \lambda~.
\]
We see that the Ricci curvature is nonpositive if and only if every eigenvalue of $B$ is between 0 and $nH$. Since the sum of the eigenvalues is $nH$, this is the same as saying that every eigenvalue is at least 0. Hence, $\Sigma$ is convex, up to time reversal. Furthermore, going back to the untraced Gauss equation, we see that nonpositive Ricci curvature implies nonpositive sectional curvature. 
We also observe that with or without the nonpositivity hypothesis, the smallest $\mu$ can be is $-n^2H^2/4$. We record these facts for later application.

\begin{prop} \label{prop: Ricci lower bound}
If $\Sigma$ is a spacelike hypersurface with mean curvature $H$ at a point $p$, then its Ricci curvature at $p$ is bounded below by $-n^2H^2/4$ times the metric.
\end{prop}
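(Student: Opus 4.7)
The proposition is essentially a one-line consequence of the eigenvalue formula already derived in the paragraphs just above the statement, so my plan is simply to point to that computation and minimize a quadratic.

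The plan is to diagonalize. From the traced Gauss equation recorded above, the shape operator $B$ and the Ricci tensor $\mathrm{Ric}$ share a common eigenbasis, and the eigenvalues are related by
\[
\mu = \lambda^2 - n H \lambda,
\]
where $\lambda$ is an eigenvalue of $B$ and $\mu$ the corresponding eigenvalue of $\mathrm{Ric}$. No sign assumption on $\lambda$ is needed here: the formula holds as an identity for every principal direction, whether or not $\Sigma$ is convex.

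Now I would simply minimize the real quadratic $f(\lambda) = \lambda^2 - nH\lambda$ over $\lambda \in \R$. Its unique critical point is at $\lambda = nH/2$, and
\[
f\!\left(\tfrac{nH}{2}\right) = \tfrac{n^2H^2}{4} - n H\cdot \tfrac{nH}{2} = -\tfrac{n^2 H^2}{4}.
\]
Hence every eigenvalue $\mu$ of $\mathrm{Ric}$ at $p$ satisfies $\mu \geq -n^2H^2/4$. Since $\mathrm{Ric}$ is symmetric with respect to the positive definite first fundamental form $\I$, this eigenvalue bound is equivalent to the tensor inequality $\mathrm{Ric} \geq -\tfrac{n^2 H^2}{4}\,\I$ at $p$, which is the claim.

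There is essentially no obstacle: the only thing worth being careful about is that the bound is pointwise and purely algebraic, so no completeness or global convexity assumption is needed (the same bound is used later even when the maximum principle is applied to a possibly non-convex comparison surface). In particular, the estimate does not require $\Sigma$ to be CMC or Ricci nonpositive; it is a universal lower bound on $\mathrm{Ric}$ in terms of the value of $H$ at the given point.
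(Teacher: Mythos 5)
Your argument is correct and is exactly the one the paper gives: the eigenvalue identity $\mu = \lambda^2 - nH\lambda$ is minimized at $\lambda = nH/2$, yielding the lower bound $-n^2H^2/4$ for each eigenvalue of $\mathrm{Ric}$. Nothing to add.
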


\begin{prop} \label{prop: nonpos sec curv}
If $\Sigma$ is a spacelike hypersurface with non-positive Ricci curvature, then it has non-positive sectional curvature.
\end{prop}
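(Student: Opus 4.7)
The plan is to reduce the statement to a pointwise linear-algebra fact about the shape operator $B$, by exploiting the Gauss equation already written down in the excerpt and a Lagrange-type identity.

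Fix a point $p\in\Sigma$. Since $B$ is $\I$-symmetric, choose an $\I$-orthonormal basis $e_1,\dots,e_n$ of $T_p\Sigma$ consisting of eigenvectors of $B$, with $Be_i=\lambda_ie_i$. By the discussion in the paragraphs preceding the proposition (trace of the Gauss equation and the observation that the eigenvalues of $B$ sum to $nH$), non-positivity of $\mathrm{Ric}$ at $p$ is equivalent to $0\le\lambda_i\le nH$ for every $i$; in particular every $\lambda_i$ is non-negative.

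For orthonormal $X=\sum_iX_ie_i$ and $Y=\sum_iY_ie_i$ in $T_p\Sigma$, the diagonalization of $B$ yields $\II(X,X)=\sum_i\lambda_iX_i^2$, $\II(Y,Y)=\sum_i\lambda_iY_i^2$, and $\II(X,Y)=\sum_i\lambda_iX_iY_i$. Contracting the Gauss equation $R_{abcd}=-(\II_{ac}\II_{bd}-\II_{ad}\II_{bc})$ with $X^aY^bY^cX^d$ yields the familiar relation expressing the sectional curvature of the plane spanned by $X$ and $Y$. Setting $a_i:=\sqrt{\lambda_i}\,X_i$ and $b_i:=\sqrt{\lambda_i}\,Y_i$ (legitimate because $\lambda_i\ge 0$) and applying Lagrange's identity, one obtains
\[
\II(X,X)\,\II(Y,Y)-\II(X,Y)^2=\Bigl(\sum_ia_i^2\Bigr)\Bigl(\sum_ib_i^2\Bigr)-\Bigl(\sum_ia_ib_i\Bigr)^2=\sum_{i<j}\lambda_i\lambda_j\bigl(X_iY_j-X_jY_i\bigr)^2\ge 0.
\]
In the sign convention fixed by the Gauss equation displayed above (the same one that, when traced, produced the Ricci formula $\mathrm{Ric}_a^b=-(nHB_a^b-B_c^bB_a^c)$ already used in the excerpt), the sectional curvature of the plane spanned by $X$ and $Y$ equals $-\bigl(\II(X,X)\,\II(Y,Y)-\II(X,Y)^2\bigr)$, and is therefore non-positive.

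No genuine obstacle arises: the proof is pointwise linear algebra once $B$ is diagonalized, and Lagrange's identity delivers the required inequality in one line. The only bookkeeping concerns the sign in the Gauss equation, but this is already pinned down by the formulas stated earlier in the excerpt.
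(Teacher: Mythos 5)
Your proof is correct and fleshes out exactly the argument the paper leaves implicit: the paper's terse remark "going back to the untraced Gauss equation, we see that nonpositive Ricci curvature implies nonpositive sectional curvature" rests precisely on the chain you spell out — nonpositive Ricci $\Rightarrow$ all eigenvalues of $B$ nonnegative $\Rightarrow$ $\II$ positive semidefinite $\Rightarrow$ $\II(X,X)\II(Y,Y)-\II(X,Y)^2\ge 0$ by Cauchy--Schwarz (here in the guise of Lagrange's identity) $\Rightarrow$ nonpositive sectional curvature. Same approach, just written out in full.
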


We will also need the following splitting theorem.

\begin{theorem} [{\cite[Theorem 3.1]{MR1216578}}] \label{thm:splitting theorem}
Suppose that $\Sigma$ is an entire hypersurface in $\R^{n,1}$ with constant mean curvature $H$, and second fundamental form $\II$. If there is a point $p \in \Sigma$ and a tangent vector $v \in T_p\Sigma$ such that $\II(v,v) = 0$, then $\Sigma$ splits extrinsically as the product of a line and an $n-1$ dimensional submanifold $\Sigma'$. In other words, there is a hypersurface $\Sigma' \subset v^\perp \cong \R^{n-1,1}$ of constant mean curvature $\frac{nH}{n-1}$ such that $\Sigma = \Sigma' \times \R v$.
\end{theorem}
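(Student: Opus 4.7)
The plan is to show that the direction $v$ extends to a globally parallel tangent vector field on $\Sigma$, and that this field is in fact a constant vector in ambient Minkowski space; translation invariance of $\Sigma$ in that direction then yields the desired extrinsic splitting. To begin, Theorem \ref{thm: Cheng-Yau} combined with the discussion immediately after it shows that the shape operator $B$ of $\Sigma$ is positive semi-definite (orienting so that $\Sigma$ is future-convex). Hence $\II(v,v) = \langle B v, v\rangle = 0$ forces $B v = 0$, so $v$ belongs to $\ker B$ at $p$.

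The central analytic step is to show that the minimum eigenvalue function $\lambda_1 := \lambda_{\min}(B)$ vanishes identically on $\Sigma$. Codazzi and flatness of the ambient space give a Simons-type identity of the schematic form $\Delta B = nH B^2 - |B|^2 B$ as an endomorphism of $T\Sigma$. Taking a unit $\lambda_1$-eigenvector $v_1$ at a point and extending it to a smooth unit field $V$ parallel at that point, the smooth function $\phi_V := \langle BV, V\rangle$ satisfies $\phi_V \geq \lambda_1$ globally with equality at the point, and at that point its Laplacian evaluates to $\lambda_1(nH\lambda_1 - |B|^2)$. Working in the viscosity sense to bypass issues at points of eigenvalue multiplicity, this identifies $\lambda_1$ as a non-negative viscosity supersolution of
\[
\Delta u \;\leq\; u\bigl(nH\,u - |B|^2\bigr) \;\leq\; C u,
\]
where $C := n^2 H^2$ and we have used $0 \leq \lambda_1 \leq nH$. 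Since the zero-order coefficient $-C$ of the operator $\Delta - C$ has the correct (non-positive) sign, the Hopf strong maximum principle, together with $\lambda_1(p) = 0$ and connectedness of $\Sigma$, forces $\lambda_1 \equiv 0$.

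With $\lambda_1 \equiv 0$ the distribution $\ker B$ has positive rank, and I claim it is $\nabla$-parallel. On the one hand, differentiating $BV = 0$ for a smooth local section $V$ of $\ker B$ gives $(\nabla_X B)V = -B\,\nabla_X V \in \mathrm{Im}(B) = (\ker B)^\perp$. On the other hand, the equality case $\Delta \lambda_1 \equiv 0$ in the eigenvalue perturbation formula forces the complementary inclusion $(\nabla_X B)V \in \ker B$, so combining yields $(\nabla_X B)V = 0$ and hence $\nabla_X V \in \ker B$ for all $X$. Pick a unit parallel section $V$ of $\ker B$, globally defined since $\Sigma$ is diffeomorphic to $\R^n$ and hence simply connected. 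The Gauss--Weingarten formula
\[
\nabla^{\R^{n,1}}_X V \;=\; \nabla_X V + \II(X, V)\,\nu \;=\; 0
\]
then shows $V$ is constant in the ambient Minkowski space, necessarily spacelike. By intrinsic completeness of $\Sigma$ (Theorem \ref{thm: Cheng-Yau}), the flow of $V$ on $\Sigma$ is complete; this flow coincides with translation by $\R V$ in the ambient space, so $\Sigma$ is invariant under $\R V$-translations, giving $\Sigma = \Sigma' \times \R v$ with $\Sigma' \subset V^\perp \cong \R^{n-1,1}$. Tracing the shape operator over the splitting $T\Sigma = \R V \oplus T\Sigma'$ and using $BV = 0$ gives $nH = 0 + (n-1)H_{\Sigma'}$, so $H_{\Sigma'} = nH/(n-1)$, as claimed. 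The main obstacle is the viscosity-theoretic handling of $\lambda_1$ and the clean extraction of $(\nabla_X B)V \in \ker B$ at points where the multiplicity of the zero eigenvalue jumps; these are addressed by standard techniques, aided by the Ricci lower bound of Proposition \ref{prop: Ricci lower bound} and the completeness from Theorem \ref{thm: Cheng-Yau}, after which the remaining steps are classical.
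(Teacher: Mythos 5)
The paper does not prove this theorem---it is cited directly from \cite{MR1216578}---so your proof must stand on its own, and there is no argument in the paper to compare against. Your overall strategy is the natural one and is essentially correct in outline: use Cheng--Yau (Theorem \ref{thm: Cheng-Yau} and the discussion that follows it) to get $B \geq 0$, combine Simons' identity with a strong maximum principle to show $\lambda_{\min}(B) \equiv 0$, and then produce an ambient-constant parallel vector field in $\ker B$ to split off a Euclidean factor.

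Two points need attention. First, since $\lambda_1$ already achieves its minimum value $0$ at the interior point $p$, only the \emph{local} Hopf strong maximum principle (for viscosity supersolutions of linear equations with locally bounded coefficients) is needed at this step; the Ricci lower bound of Proposition \ref{prop: Ricci lower bound} and the Omori--Yau style appeal to completeness are not doing work here. Completeness from Theorem \ref{thm: Cheng-Yau} \emph{is} essential later, to guarantee the $V$-flow is a global translation. Second, and more substantively: the derivation of ``$(\nabla_X B)V \in \ker B$ from the equality case $\Delta\lambda_1 \equiv 0$'' is correct only where the zero eigenvalue of $B$ is simple. There a smooth local unit eigenfield $V$ exists and one has the second-order formula
\[
\Delta\lambda_1 = \langle(\Delta B)V, V\rangle - 2\sum_i\langle B\nabla_{e_i}V, \nabla_{e_i}V\rangle,
\]
whose left side and first right-hand term both vanish identically (the latter because $BV = B^2V = 0$, so the Simons term is zero regardless of sign conventions); since $B \geq 0$, each $\langle B\nabla_{e_i}V, \nabla_{e_i}V\rangle$ must vanish, giving $\nabla_X V \in \ker B$. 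Where $\lambda_1 = 0$ has multiplicity greater than one this formula is unavailable, and more importantly $\operatorname{rank}(\ker B)$ need not be constant, so you cannot directly ``pick a unit parallel section of $\ker B$, globally defined.'' This is the genuine gap: passing from $\lambda_1 \equiv 0$ to a global parallel field requires either establishing that the nullity of $B$ is constant, or working on the open set of minimal nullity (where the distribution is parallel), invoking the de Rham decomposition to extract a parallel line subbundle there, and arguing that this extends across the set of higher nullity. Calling this ``standard techniques'' without any indication of the mechanism leaves the central difficulty of the theorem unaddressed. The remaining steps---Gauss--Weingarten showing $V$ is ambient-constant, completeness turning the flow of $V$ into a genuine translation of $\Sigma$, and the mean-curvature computation giving $H_{\Sigma'} = nH/(n-1)$---are correct as written.
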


As the only entire CMC hypersurface in $\R^{1,1}$ is the hyperbola, a consequence of this theorem is that every entire CMC surface in $\R^{2,1}$ which is not a trough has positive definite second fundamental form everywhere.

Finally, we state here for reference a special case of Corollary \ref{cor: dom of dep regular domain}, which we will prove later.
\begin{prop} \label{prop dod of CMC is regular}
If $\Sigma$ is entire with constant mean curvature $H$, positive with respect to its future unit normal, then $\mathcal{D}(\Sigma)$ is a regular domain.
\end{prop}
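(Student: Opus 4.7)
The plan is to show that $\Sigma$ is contained in the causal future of some point $c\in\R^{n,1}$, which immediately yields null support planes of $\mathcal H^-(\Sigma)$ in every direction of $\mathbb S^{n-1}$. By Theorem~\ref{thm: Cheng-Yau} and the Gauss-equation analysis preceding Proposition~\ref{prop: Ricci lower bound}, $\Sigma$ is intrinsically complete and the eigenvalues of its shape operator lie in $[0,nH]$; since $H>0$ relative to the future normal, $\Sigma$ is convex, hence the graph of a convex function $f:\R^n\to\R$ with $|Df|<1$. In particular $\mathcal H^+(\Sigma) = \emptyset$, and $\mathcal D(\Sigma) = I^+(\mathcal H^-(\Sigma))$ whenever $\mathcal H^-(\Sigma) \neq \emptyset$, so the task reduces to exhibiting a spacelike support plane for $\mathcal H^-(\Sigma)$, equivalently showing that the null support function $\varphi = f^*|_{\mathbb S^{n-1}}$ is finite on at least two distinct points.

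I would argue by induction on $n$. The base case $n=1$ is immediate: entire CMC curves in $\R^{1,1}$ are hyperbolas, whose domains of dependence are future cones from a point. For the induction step, first consider the case where the second fundamental form $\II$ of $\Sigma$ admits a null tangent direction at some point. The splitting theorem (Theorem~\ref{thm:splitting theorem}) then gives a decomposition $\Sigma = \Sigma'\times\R v$ with $\Sigma'\subset v^\perp\cong\R^{n-1,1}$ an entire CMC hypersurface of positive mean curvature $\tfrac{nH}{n-1}$. A direct causality check (using that $v$ is spacelike, so projection onto $v^\perp$ preserves causality and inextendability of causal curves) shows $\mathcal D(\Sigma) = \mathcal D_{\R^{n-1,1}}(\Sigma')\times\R v$; regularity is then inherited from the inductive hypothesis, since the trivial extension in the $v$-direction preserves spacelike support planes of the past horizon.

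Otherwise, $\II$ is strictly positive definite and $\Sigma$ is strictly convex. Fix $q_0\in\Sigma$ with future unit normal $\nu_0$ and set $c = q_0 - \tfrac{1}{H}\nu_0$. Consider the function $\phi(p) = \langle p-c, p-c\rangle$ on $\Sigma$. The standard restriction formula for Laplacians on a spacelike hypersurface gives
\[
\Delta_\Sigma\phi = \operatorname{tr}_{T\Sigma}\text{Hess}^{\R^{n,1}}\phi + nH\,\nu(\phi) = 2n + 2nH\langle p-c,\nu(p)\rangle.
\]
Decomposing $p-c = a\nu + w$ with $w\in T_p\Sigma$, one obtains $\phi = -a^2 + |w|^2$ and $|\nabla^\Sigma\phi|^2 = 4|w|^2$, so at any critical point $w=0$ and hence $\phi = -a^2 \leq 0$ there. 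Since $\Sigma$ is complete and its Ricci is bounded below by $-\tfrac{n^2H^2}{4}g_\Sigma$ (Proposition~\ref{prop: Ricci lower bound}), the Omori--Yau maximum principle applies. If $\sup_\Sigma \phi > 0$, it produces a sequence $(p_k)$ with $\phi(p_k)\to\sup\phi$ and $|\nabla^\Sigma\phi|(p_k)\to 0$; then $|w(p_k)|^2\to 0$ forces $a(p_k)^2 = |w(p_k)|^2 - \phi(p_k) \to -\sup\phi < 0$, contradicting $a^2\geq 0$. Thus $\phi\leq 0$ on $\Sigma$, i.e., $\Sigma\subset J^+(c)$; writing $c = (\mathbf c, c_{n+1})$, this reads $f(\mathbf x)\geq c_{n+1} + |\mathbf x-\mathbf c|$, whence $f^*(\mathbf y)\leq \mathbf y\cdot\mathbf c - c_{n+1} < \infty$ for every $\mathbf y\in\mathbb S^{n-1}$. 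Therefore $\mathcal D(\Sigma)$ is bounded by null hyperplanes in every direction, so it is a regular domain.

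The main obstacle lies in the Omori--Yau step in the strictly convex case when $\sup_\Sigma\phi = +\infty$: the standard form of Omori--Yau requires a finite supremum, so one must either extract an a priori upper bound on $\phi$ from strict convexity (perhaps via intrinsic distance estimates on $\Sigma$ combined with the shape operator bound $B\leq nH\cdot\mathrm{id}$) or adapt the argument using a bounded monotone transform of $\phi$ (such as $\arctan$) and carefully track the additional terms arising in the gradient and Laplacian. The splitting step is precisely what rules out the obvious counterexamples (such as troughs, for which $\phi$ is genuinely unbounded in the translational direction and for which no point $c$ with $\Sigma\subset J^+(c)$ exists), so it is plausible that strict convexity suffices in the remaining case.
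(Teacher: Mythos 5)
There is a genuine gap, and it is not a technical one that can be patched: the conclusion your argument is aiming for is false. In the strictly convex case you try to show $\Sigma\subset J^+(c)$, which is equivalent to the null support function $\varphi$ being finite \emph{everywhere} on $\mathbb{S}^{n-1}$. But the statement only requires $\varphi$ to be finite at (at least) two points, and for most entire CMC hypersurfaces $\varphi$ is $+\infty$ on a large set. The splitting theorem only removes the case where $\varphi$ is finite at exactly two points (the trough/wedge case); it does nothing to prevent, say, $\varphi$ being finite on exactly three points of $\mathbb{S}^1$ when $n=2$. Such a domain is not a wedge, so the CMC surface $\Sigma$ inside it has strictly positive definite $\II$ by Theorem~\ref{thm:splitting theorem}, and yet $\Sigma$ escapes to spacelike infinity in every direction where $\varphi=+\infty$ (equivalently, $u(r\theta)/r\to V(\theta)<1$ there, so $\langle p-c,p-c\rangle\sim r^2(1-V(\theta)^2)\to+\infty$). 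Thus $\sup_\Sigma\phi=+\infty$ and there is \emph{no} point $c$ with $\Sigma\subset J^+(c)$. You correctly flag the unboundedness of $\phi$ as the obstacle, but your proposed remedies (an a priori bound from strict convexity, or a bounded monotone transform) cannot work, because $\phi\leq 0$ is simply not true; no amount of care with Omori--Yau will prove a false inequality, and strict convexity does not suffice, contrary to what you speculate.

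The paper's proof (Lemma~\ref{lem: C0 bound} and Corollary~\ref{cor: dom of dep regular domain}) sidesteps this by bounding a different quantity: for $p\in\mathcal{D}(\Sigma)\cap I^-(\Sigma)$ it shows $d(p,\Sigma)<1/H$. This bound genuinely holds, and the argument for it is elementary precisely because $p\in\mathcal{D}(\Sigma)$ guarantees (Lemma~\ref{lem: minattained}) that the infimum of the square distance to $p$ is attained on a compact subset of $\Sigma$, so the maximum principle applies at an interior minimizer, with no Omori--Yau needed. The regularity of $\mathcal{D}(\Sigma)$ then follows by contradiction: if the past horizon had zero or one null support plane, the past hyperboloids about any $q_0\in\Sigma$ would all meet $\mathcal{D}(\Sigma)$, producing points $p\in\mathcal{D}(\Sigma)\cap I^-(\Sigma)$ with $d(p,q_0)$ (hence $d(p,\Sigma)$) arbitrarily large, contradicting the bound; one then separately rules out a nonempty future horizon. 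The key conceptual difference from your attempt: the paper moves the basepoint $p$ around the domain of dependence and bounds the distance from $p$ to $\Sigma$, rather than fixing an external apex $c$ and trying to bound the distance from $c$ to all of $\Sigma$ (which is unbounded). Your splitting-theorem induction step, on the other hand, is correct and would be a reasonable reduction if it actually isolated the failure, but it does not: the unbounded case you need to worry about is not the trough.
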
 

\subsection{Asymptotics}

We end the preliminary section by comparing the null support function with the other two notions of asymptotics of an entire spacelike hypersurface that appear in the literature. We start by introducing the data $(L, f_0)$ used in \cite{choitreibergs}. Here $L$ is a closed subset of $\mathbb S^{n-1}$ and $f_0$ is a function on $L$. Given a entire spacelike hypersurface expressed as the graph of a function $u: \R^n \to \R$, define
\begin{align*}
L = \{\theta \in &\mathbb S^{n -1} | \lim_{r \to +\infty} \tfrac{u(r\theta)}{r} = 1 \} \qquad \textrm{and}\\
f_0(\theta) = &\lim_{r \to +\infty} r- u(r\theta)  \qquad\qquad \textrm{for $\theta \in L$.}
\end{align*}
We remark that $L$ is closed: indeed, if we define $V(\theta) = \lim_{r \to +\infty} \frac{u(r\theta)}{r}$, then $V$ is the limit of 1-Lipschitz functions, so it is continuous, and $V^{-1}(1)$ is closed. Also, $f_0$ may in general take the value $+\infty$. 

We now show that the data $(L, f_0)$ determines the null support function $\varphi$, and so long as the mean curvature is bounded below, $\varphi$ determines $(L, f_0)$. {Recall that the null support function is defined on $\mathbb S^{n-1}$ as
$\varphi(\theta)=\sup_{\mathbf y \in \R^n} \langle\theta , \mathbf y\rangle - u(\mathbf y)$.}

\begin{prop} \label{prop: choi treibergs function} Let $u$ be a function on $\R^n$ whose graph is entire and spacelike, let $f_0$ and $L$ be defined as above, and let $\varphi$ be the null support function of $u$. 
Then 
\[
\varphi(\theta) = \begin{cases}
	f_0(\theta)& \text{for } \theta \in L\\
	+ \infty& \text{for } \theta \notin L
\end{cases}
\]
Moreover, if the graph of $u$ has mean curvature bounded below by a positive constant $H$, then $L$ is the closure of the set $\{\theta \,|\, \varphi(\theta) < + \infty\}$.
\end{prop}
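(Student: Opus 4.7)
The plan is to prove the two claims separately, both using the $1$-Lipschitz estimate $|u(\mathbf{x})-u(\mathbf{y})|\leq|\mathbf{x}-\mathbf{y}|$ from the spacelike condition $|Du|<1$. For the equality $\varphi=f_0$ on $L$, both inequalities are elementary. The lower bound $\varphi(\theta)\geq f_0(\theta)$ for $\theta\in L$ follows by taking $\mathbf{y}=r\theta$ in the supremum defining $\varphi$ and letting $r\to+\infty$. For the upper bound, fix any $\mathbf{y}\in\R^n$; the Lipschitz estimate gives $u(r\theta)-u(\mathbf{y})\leq|r\theta-\mathbf{y}|=r-\langle\theta,\mathbf{y}\rangle+O(1/r)$, and rearranging and letting $r\to+\infty$ yields $\langle\theta,\mathbf{y}\rangle-u(\mathbf{y})\leq f_0(\theta)$, whence $\varphi(\theta)\leq f_0(\theta)$ after taking the supremum in $\mathbf{y}$. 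For $\theta\notin L$, the Lipschitz property forces $\limsup_{r}u(r\theta)/r\leq 1$, so there exist $r_k\to+\infty$ and $\delta>0$ with $u(r_k\theta)/r_k<1-\delta$; plugging $\mathbf{y}=r_k\theta$ into the supremum gives $\varphi(\theta)\geq r_k-u(r_k\theta)\geq\delta r_k\to+\infty$.

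For the second claim, the inclusion $\overline{\{\varphi<+\infty\}}\subset L$ is immediate from the first claim together with the closedness of $L$. For the reverse inclusion, I would first establish it for the past horizon $\mathcal{H}=\mathcal{H}^-(\Sigma)$ and then transfer the conclusion to $\Sigma$ using the mean curvature hypothesis. The graph function of $\mathcal{H}$ is $u_\mathcal{H}(\mathbf{x})=\sup_{\theta\in\mathbb{S}^{n-1}}\langle\theta,\mathbf{x}\rangle-\varphi(\theta)$, and since $\varphi$ is bounded below on the compact sphere by lower semicontinuity, a direct estimate on $u_\mathcal{H}(r\theta_0)/r=\sup_{\varphi(\theta)<+\infty}[\langle\theta,\theta_0\rangle-\varphi(\theta)/r]$ shows
\[
V_\mathcal{H}(\theta_0):=\lim_{r\to+\infty}\frac{u_\mathcal{H}(r\theta_0)}{r}=\sup_{\theta\in\overline{\{\varphi<+\infty\}}}\langle\theta,\theta_0\rangle.
\]
Since $\theta,\theta_0\in\mathbb{S}^{n-1}$, this supremum equals $1$ if and only if $\theta_0\in\overline{\{\varphi<+\infty\}}$, so $L_\mathcal{H}=\overline{\{\varphi<+\infty\}}$.

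The final step is to upgrade to $L_\Sigma=L_\mathcal{H}$ by showing that $u-u_\mathcal{H}$ is bounded; this forces $V_\Sigma=V_\mathcal{H}$ and hence the two sets coincide. The inequality $u\geq u_\mathcal{H}$ is automatic since $\mathcal{H}$ lies in the past of $\Sigma$. For the upper bound, assuming $\mathcal{D}(\Sigma)$ is a regular domain, Theorem \ref{thm: existence} provides the entire CMC-$H$ hypersurface $\Sigma_H\subset\mathcal{D}(\Sigma)$ with $\mathcal{D}(\Sigma_H)=\mathcal{D}(\Sigma)$, and the general comparison principle of Section \ref{sec: uniqueness} applied to $\Sigma_1=\Sigma$ and $\Sigma_2=\Sigma_H$ gives $u\leq u_{\Sigma_H}$. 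Since the existence argument places $\Sigma_H$ weakly below the level set $T^{-1}(1/H)$ of cosmological time, at each $\mathbf{x}$ the timelike vertical segment from $(\mathbf{x},u_\mathcal{H}(\mathbf{x}))\in\mathcal{H}$ to $(\mathbf{x},u_{\Sigma_H}(\mathbf{x}))\in\Sigma_H$ realizes $u_{\Sigma_H}(\mathbf{x})-u_\mathcal{H}(\mathbf{x})\leq T(\mathbf{x},u_{\Sigma_H}(\mathbf{x}))\leq 1/H$, and combining yields $u-u_\mathcal{H}\leq 1/H$. The main technical obstacle I foresee is justifying the regularity of $\mathcal{D}(\Sigma)$ under the weaker hypothesis of a lower bound on the mean curvature, which should follow by adapting Proposition \ref{prop dod of CMC is regular}: Cheng--Yau convexity holds under a $C^1$ bound on the mean curvature, and a divergence-theorem argument on $\mathrm{div}(Du/\sqrt{1-|Du|^2})\geq nH$ rules out $|Du|$ being uniformly bounded away from $1$, producing at least two non-parallel null support planes.
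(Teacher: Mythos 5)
Your proof of the first part is correct and close in spirit to the paper's, though with one genuine variation: for the bound $\varphi(\theta)\leq f_0(\theta)$ the paper argues geometrically via null support planes (the past of the null line through the asymptotic ray is a null hyperplane lying below the graph), whereas you derive it purely analytically from the Lipschitz estimate $u(r\theta)-u(\mathbf{y})\leq|r\theta-\mathbf{y}|=r-\langle\theta,\mathbf{y}\rangle+O(1/r)$. Both are fine; yours is a bit more elementary. The $\theta\notin L$ case via $\liminf u(r\theta)/r<1$ is also correct.

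For the second part you take a genuinely different and substantially heavier route than the paper. The paper's argument is local and direct: if $\varphi\equiv+\infty$ on a neighborhood of $\theta_0$, apply a boost in $SO(n,1)$ so that this neighborhood contains the hemisphere centered at $\theta_0$; then $\mathcal D(\Sigma)$ contains the spacelike ray $\{(r\theta_0,c):r\geq 0\}$, Lemma \ref{lem: C0 bound} bounds $u$ above along the ray, hence $V(\theta_0)\leq 0$ and $\theta_0\notin L$. Your argument instead computes $L_{\mathcal H}$ for the past horizon via the Legendre transform (which is correct and a nice observation, since convexity of $u_{\mathcal H}$ makes $V_{\mathcal H}$ well-defined), and then transfers the identity $L=\overline{\{\varphi<+\infty\}}$ to $\Sigma$ by showing $0\leq u-u_{\mathcal H}\leq 1/H$. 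This strategy works, but the particular mechanism you use to get the upper bound is an unnecessary detour: you invoke the existence theorem to produce $\Sigma_H$, then the comparison Theorem \ref{thm: comparison}, then the cosmological-time bound on $\Sigma_H$. You can instead apply Lemma \ref{lem: C0 bound} directly to the point $p_\epsilon=(\mathbf{x},u_{\mathcal H}(\mathbf{x})+\epsilon)\in\mathcal D(\Sigma)\cap I^-(\Sigma)$ to get $u(\mathbf{x})-u_{\mathcal H}(\mathbf{x})\leq 1/H$ immediately, with no appeal to Sections 2--3 beyond Lemma \ref{lem: C0 bound} (which the paper's proof also uses). Finally, the closing paragraph about proving regularity of $\mathcal D(\Sigma)$ is misplaced: Corollary \ref{cor: dom of dep regular domain} already establishes exactly this under the hypothesis of a positive lower bound on the mean curvature, and your proposed alternative is imprecise --- Cheng--Yau's convexity result is specific to constant mean curvature, not a one-sided bound.
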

\begin{proof}(See also Section 2.3 of \cite{Bonsante:2015vi}) 
First note that since $V(\mathbf x) := \lim_{r\to +\infty} \frac{u(r \mathbf x)}{r}$ is 1-Lipschitz with $V(0) = 0$, its value is at most 1 at all $\theta \in \mathbb S^{n-1}$, so we have $V(\theta) < 1$ for $\theta \notin L$. It is harmless to extend the definition of $f_0$ to all $\theta$, in which case by the previous sentence we see that $f_0(\theta) = + \infty$ for $\theta \notin L$. We now show that $f_0 = \varphi$.

Since $u$ is strictly 1-Lipschitz, the function $r - u(r\theta)$ is an increasing function of $r$, so we can replace the ``limit'' in the definition of $f_0$ with a supremum over $r$. Since the function $r - u(r\theta)$ is the restriction of $\langle \theta, \mathbf x \rangle - u(\mathbf x)$ to the ray in the direction $\theta$, we see that the definition of $f_0$ is analogous to the definition of $\varphi$ except that the supremum is taken over a smaller set. Hence, $f_0 \leq \varphi$.

On the other hand, if $\ell$ is a null line in the past of the graph of $u$, then the past of $\ell$ must also lie in the past of the graph of $u$. Since the past of $\ell$ is the same as the past of the unique null plane through $\ell$, this plane must also lie in the past of the graph of $u$. Applying this observation to the half-line $\{(r\theta_0,r - f_0(\theta_0))\in\R^{n,1}\,|\,r\geq 0\}$, we conclude that $u(\mathbf x) \geq \langle \theta_0, \mathbf x \rangle - f_0(\theta_0)$, and hence that $f_0 \geq \varphi$. This completes the proof of the theorem up to the final statement.

For the final statement, suppose that the graph of $u$ has mean curvature bounded below by $H$ and $\varphi = +\infty$ on an open set containing $\theta_0$. The linear isometry group $SO(n,1)$ acts on the sphere of null directions by conformal transformations, so up to the action of this group, we may assume that the open set contains an entire hemisphere centered at $\theta_0$. Then the domain of dependence of the graph of $u$ contains a spacelike ray $\{(r\theta_0,c)\in\R^{n,1}\,|\,r\geq 0\} = 0$ for some sufficiently large $c$. By Lemma \ref{lem: C0 bound}, the function $u$ is bounded above by $1/H$ along this ray, so $V(\theta_0) \leq 0$ and in particular $\theta_0 \notin L$. Since we have already seen that $L$ is a closed set containing $\{\theta \,|\, \varphi(\theta) < + \infty\}$, this completes the proof.
\end{proof}

The other commonly used notion of the asymptotics of an entire spacelike hypersurface $\Sigma$ is its asymptotic cut at future null infinity, which we now define. Introduce coordinates $\{t',t_a,\theta\}$ on the complement of the $x^{n+1}$ axis in $\R^{n,1}$ as follows: if $\{r > 0,\;\theta\in \mathbb S^{n-1}\}$ are spherical coordinates on $\R^n$, then set $t' = x^{n+1} - r$ and $t_a = x^{n+1} + r$. The function $t'$ is known as retarded time, and $t_a$ advanced time. Fixing $\theta$ defines a half-plane in $\R^{n,1}$, which meets the hypersurface $\Sigma$ along a spacelike curve. For $t_a$ large enough, this curve is the graph of a decreasing function $t' = f_\theta(t_a)$. The asymptotic cut at future null infinity of $\Sigma$ is defined to be the graph of the upper semicontinuous function $\psi(\theta) = \lim_{t_a \to +\infty} f_\theta(t_a): \mathbb S^{n-1} \to \R$. This definition becomes more geometrically intuitive if we identify the cylinder $\mathbb S^{n-1} \times \R$ with the component $\mathscr{I}^+$ of the boundary of the Penrose compactification of $\R^{n,1}$ as described in \cite[Section 5.1]{Hawking:1973aa}; then the closure of $\Sigma$ in the compactification meets $\mathscr{I}^+$ in the closure of the graph of $\psi$. 

We remark that this is a generalization of the traditional notion of a cut at future null infinity. Traditionally, a cut means the intersection of the closure of the null cone of a point in $\R^{n,1}$ with $\mathscr{I}^+$, which are just the graphs of affine functions on $\mathbb S^{n-1}$. In \cite{Stumbles:1981aa}, this is generalized to a ``BMS super-translated'' cut, meaning the graph of a sufficiently smooth function. According to the following proposition, our further generalization to upper semicontinuous functions is a very natural one:

\begin{prop} \label{prop: null cut} Let $\Sigma$ be an entire spacelike hypersurface in $\R^{n,1}$ with null support function $\varphi$. Then $\Sigma$ is asymptotic to the cut at future null infinity given by the graph of $-\varphi$.
\end{prop}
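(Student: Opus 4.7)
The plan is to compute directly the curve in which each half-plane $\{r\theta : r \geq 0\} \times \R$ meets $\Sigma$, and read off its asymptotic retarded-time value.

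Write $\Sigma$ as the graph of a 1-Lipschitz function $u : \R^n \to \R$ and parametrize the slice over $\theta \in \mathbb S^{n-1}$ by $r \mapsto (r\theta, u(r\theta))$ for $r \geq 0$. In retarded/advanced time coordinates this becomes
\[ t'(r) = u(r\theta) - r = -(r - u(r\theta)), \qquad t_a(r) = u(r\theta) + r. \]
Because $|Du| < 1$ pointwise, $t_a$ is strictly increasing and $t'$ strictly decreasing in $r$, so the slice is indeed the graph of a strictly decreasing function $t' = f_\theta(t_a)$ on the range of $t_a$.

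The main observation is that $r \mapsto r - u(r\theta)$ is non-decreasing by the 1-Lipschitz property of $u$, so its supremum over $r \geq 0$ coincides with its limit as $r \to +\infty$. By restriction, that supremum is at most $\varphi(\theta) = \sup_{\mathbf{y} \in \R^n} \langle \theta, \mathbf{y} \rangle - u(\mathbf{y})$; and the reverse inequality is precisely the null-hyperplane argument in the proof of Proposition \ref{prop: choi treibergs function} (a null half-line in the past of $\Sigma$ extends to an entire null hyperplane in the past of $\Sigma$). Hence
\[ \lim_{r \to +\infty} t'(r) = -\varphi(\theta), \]
allowing the value $-\infty$.

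When $\varphi(\theta) < +\infty$, the identity $t_a(r) = 2r - (r - u(r\theta))$ gives $t_a(r) \to +\infty$, so $\psi(\theta) = \lim_{t_a \to +\infty} f_\theta(t_a) = -\varphi(\theta)$, as claimed. The only point requiring interpretation is the case $\varphi(\theta) = +\infty$: then $t'(r) \to -\infty$, while $t_a(r) \to +\infty$ may or may not hold, depending on how fast $u(r\theta) + r$ grows (both possibilities can occur even under the strict bound $|Du|<1$). I expect this interpretive issue, not any further calculation, to be the main difficulty: one reads the graph of $-\varphi$ as an upper semicontinuous function $\mathbb S^{n-1} \to \R \cup \{-\infty\}$, and the slices where $\varphi(\theta) = +\infty$ either limit to ``$t' = -\infty$'' on $\mathscr{I}^+$ or exit directly through future timelike infinity $i^+$, both consistent with the value $-\varphi(\theta) = -\infty$.
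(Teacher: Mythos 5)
Your proof is correct and takes the same route as the paper's: the whole content is the identity $t'(r) = -(r - u(r\theta))$, which shows $\psi = -f_0$, after which the result is exactly the first part of Proposition~\ref{prop: choi treibergs function}. Your additional remark about the case $\varphi(\theta) = +\infty$ correctly flags a genuine subtlety that the paper's one-line proof passes over silently: the definition of the cut implicitly assumes $t_a \to +\infty$ along each slice, and while this always holds when $\Sigma$ is convex (the radial derivative of $u$ along $\theta$ is non-decreasing and $> -1$, so $u(r\theta) + r \to +\infty$), it can fail for a general entire spacelike hypersurface (e.g.\ the past unit hyperboloid $u = -\sqrt{1+|\mathbf{x}|^2}$, where $t_a(r) = r - \sqrt{1+r^2} \to 0$ and the slice exits through $\mathscr{I}^-$ rather than $\mathscr{I}^+$). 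This doesn't change the conclusion (both interpretations are consistent with $\psi(\theta) = -\infty$), but it is a point worth naming, as you did.
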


\begin{proof} Since $t' = x^{n+1}-r$, the definition of $\psi$ is the same as the definition of $f_0$ above up to a sign: $\psi = - f_0$. Hence the proposition follows immediately from the first part of Proposition \ref{prop: choi treibergs function}.
\end{proof}

\section{Uniqueness} \label{sec: uniqueness} 

In this section, we prove several comparison principles for entire hypersurfaces with bounds on their mean curvature. As a corollary, we obtain the uniqueness part of Theorem \ref{thm: existence}.

\begin{theorem} \label{thm: comparison}
Let $\Sigma_1$ and $\Sigma_2$ be entire spacelike hypersurfaces in $\R^{n,1}$. Suppose that $\Sigma_1$ has mean curvature bounded below by $H_1 > 0$, $\Sigma_2$ has constant mean curvature $H_2$, and $H_1 \geq H_2$. Suppose also that $\Sigma_2 \subset \mathcal{D}(\Sigma_1)$. Then $\Sigma_2 \subset J^+(\Sigma_1)$.
\end{theorem}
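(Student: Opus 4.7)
The plan is to define $u : \Sigma_2 \to [0, \infty)$ by $u(p) = d(p, \Sigma_1)$, the Lorentzian distance from $p$ to $\Sigma_1$. Since $u(p) > 0$ precisely on $\Sigma_2 \cap I^-(\Sigma_1)$, the conclusion $\Sigma_2 \subset J^+(\Sigma_1)$ is equivalent to proving $u \equiv 0$. The hypothesis $\Sigma_2 \subset \mathcal{D}(\Sigma_1)$ will ensure that on $\{u > 0\}$ the supremum in the definition of $d(p, \Sigma_1)$ is realized by a unique future-pointing timelike geodesic orthogonal to $\Sigma_1$; together with the a priori bound $u < 1/H_1$ (which rules out focal points), this makes $u$ smooth on $\{u > 0\}$.

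The first step is to establish the subsolution inequality stated in the introduction:
\[
\Delta u \;\geq\; \frac{nH_1 H_2\, u}{1 - H_1 u} \;-\; \frac{n H_2\, |\nabla u|}{1 - H_1 u} \;-\; \frac{|\nabla u|^2}{u}
\]
on $\{u > 0\}$. I would extend $u$ to an ambient function $\hat u$ defined on a neighborhood in $\R^{n,1}$ as Lorentzian distance to $\Sigma_1$. Its level sets are equidistant hypersurfaces whose second fundamental forms evolve along a Riccati equation with initial data $\II_{\Sigma_1}$; the one-sided bound $H_1$ on the mean curvature of $\Sigma_1$ propagates to a lower bound $H_1/(1 - H_1 \hat u)$ on the mean curvature of these level sets. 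Restricting to $\Sigma_2$ and decomposing $\Delta_{\Sigma_2} u$ into its ambient-Hessian part and a contribution from the mean curvature vector $nH_2\, \nu_{\Sigma_2}$ of $\Sigma_2$ then yields the displayed inequality after a direct computation. Note that at an interior positive maximum one would have $|\nabla u| = 0$ and $\Delta u \leq 0$, while the right-hand side equals $nH_1 H_2 u/(1 - H_1 u) > 0$, a contradiction; so no interior positive maximum can exist.

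To promote this to $u \equiv 0$ on all of $\Sigma_2$, I would invoke the Omori-Yau maximum principle. Its three hypotheses all hold: $(\Sigma_2, \I)$ is complete by Theorem \ref{thm: Cheng-Yau}; its Ricci tensor is bounded below by $-n^2 H_2^2/4$ times the metric by Proposition \ref{prop: Ricci lower bound}; and $u$ is bounded above by $1/H_1$ via the $C^0$ bound (Lemma \ref{lem: C0 bound}). Omori-Yau then produces a sequence $p_k \in \Sigma_2$ with $u(p_k) \to \sup u$, $|\nabla u|(p_k) \to 0$ and $\limsup_k \Delta u(p_k) \leq 0$. Assuming for contradiction $\sup u > 0$ (so necessarily $\sup u < 1/H_1$), passing to the limit in the subsolution inequality gives
\[
0 \;\geq\; \frac{n H_1 H_2 \sup u}{1 - H_1 \sup u} \;>\; 0,
\]
the desired contradiction. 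Equivalently, one can follow the roadmap in the introduction and construct an explicit radial supersolution built from the intrinsic distance on $\Sigma_2$ that touches $u$ from above at a chosen point, using the Ricci lower bound to control the Laplacian of the intrinsic distance.

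The main technical obstacle will be the derivation of the subsolution inequality itself, which is a Lorentzian Hessian-comparison argument for the distance-to-hypersurface function under a one-sided mean curvature bound, together with careful bookkeeping of ambient versus intrinsic geometric quantities on $\Sigma_2$. The rest of the argument is essentially off-the-shelf once completeness (Cheng-Yau) and the Ricci bound (Gauss equation) for $\Sigma_2$ are in hand. Note that $H_1 > 0$ is used in two essential ways: through the strict positivity of the leading term $nH_1 H_2 u/(1 - H_1 u)$ at a would-be maximum, and through the $C^0$ bound $u < 1/H_1$ that keeps the denominator from degenerating.
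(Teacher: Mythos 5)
Your outline matches the paper's strategy at the level of structure: define $u(p)=d(p,\Sigma_1)$ on $\Sigma_2$, establish the displayed differential inequality, and deduce $u\equiv 0$ from an Omori--Yau type argument using completeness (Cheng--Yau), the Ricci lower bound from the Gauss equation, and the $C^0$ bound $u<1/H_1$. The inequality you write is exactly Lemma~\ref{lem: subsolution}. However, there is a genuine gap. Your ambient/Riccati route requires $u$ to be $C^2$ on $\{u>0\}$, and you justify this by claiming that the a priori bound $u<1/H_1$ rules out focal points. That is not correct: the hypothesis only controls the \emph{mean} curvature of $\Sigma_1$, not individual principal curvatures, so an eigenvalue $\lambda_{\max}$ of the shape operator may exceed $H_1$ and put a focal point at distance $1/\lambda_{\max}<1/H_1$. (The correct statement is the second-order minimality condition $u\lambda_i\leq 1$ at the realizing point $q_0$, but that is a different argument and it is only non-strict.) More seriously, even absent focal points one must rule out the cut locus --- points $p$ for which the maximizer $q_0$ on $\Sigma_1$ is not unique --- and the $C^0$ bound does nothing for this; at such $p$, $u$ is merely Lipschitz. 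Since the ``sequence'' form of Omori--Yau you invoke requires $u\in C^2$, this regularity gap propagates through your whole plan. The Riccati step also quietly needs Jensen's inequality for $x\mapsto x/(1-ux)$ to pass from a bound on $\mathrm{tr}\, B$ to a bound on the mean curvature of the equidistant level set; this works, but you should say so.

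The paper sidesteps both issues by never differentiating $u$ directly. It shows $u$ is a \emph{viscosity} subsolution by constructing, at each $p_0$ with $u(p_0)>0$, a smooth lower comparison $\underline u(p)=d(p,q(p))$ where $q:\Sigma_2\to\Sigma_1$ has differential at $p_0$ equal to an isometric Lorentz boost $T_{p_0}\Sigma_2\to T_{q_0}\Sigma_1$ scaled by an optimized factor $\mu$; computing $\Delta\langle q-p,q-p\rangle$ by the chain rule on $\Sigma_2\times\Sigma_1$ produces the inequality directly, using only the trace of $B$ on $\Sigma_1$ and requiring no regularity of $u$. For the maximum principle it then builds by hand a $C^2$ function $\phi=u(p_1)-\epsilon r(p_1)^2+\epsilon r^2$ (smooth because $\Sigma_2$ has non-positive sectional curvature and hence no conjugate points) which touches $u$ from above at $p_1$, rather than citing the $C^2$ Omori--Yau as a black box. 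Your final alternative --- ``construct an explicit radial supersolution built from the intrinsic distance on $\Sigma_2$ that touches $u$ from above'' --- is precisely this, and is the route you should actually develop; as written, the Riccati-plus-sequence-Omori--Yau plan does not close.
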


We recall that $J^+(\Sigma_1) = \Sigma_1 \cup I^+(\Sigma_1)$ is the causal future. The uniqueness of solutions in a regular domain is an immediate corollary:

\begin{cor*}[Uniqueness part of Theorem \ref{thm: existence}]
 For any regular domain $\mathcal{D}$, there is at most one entire hypersurface of constant mean curvature $H$ whose domain of dependence is $\mathcal{D}$.
\end{cor*}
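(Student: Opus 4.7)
The corollary follows immediately from Theorem \ref{thm: comparison}. Indeed, if $\Sigma_1$ and $\Sigma_2$ are two entire hypersurfaces of the same constant mean curvature $H>0$ with $\mathcal{D}(\Sigma_1)=\mathcal{D}(\Sigma_2)=\mathcal{D}$, then each is contained in the other's domain of dependence (any entire spacelike hypersurface lies in its own), so applying Theorem \ref{thm: comparison} with the roles of $\Sigma_1$ and $\Sigma_2$ swapped yields both $\Sigma_2\subset J^+(\Sigma_1)$ and $\Sigma_1\subset J^+(\Sigma_2)$. Writing $\Sigma_i$ as the graph of a $1$-Lipschitz function $u_i:\R^n\to\R$, and using the fact that $J^+(\Sigma_i)$ is exactly the epigraph of $u_i$, these inclusions translate respectively into $u_2\geq u_1$ and $u_1\geq u_2$, hence $\Sigma_1=\Sigma_2$. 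The real task is therefore Theorem \ref{thm: comparison}.

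My plan for Theorem \ref{thm: comparison} is to consider the Lorentzian distance to $\Sigma_1$ as a function $u:\Sigma_2\to[0,+\infty)$ defined by $u(p)=\sup\{d(p,q):q\in\Sigma_1\cap J^+(p)\}$, so that $u(p)>0$ exactly when $p\in\Sigma_2$ lies in the strict past of $\Sigma_1$. The conclusion $\Sigma_2\subset J^+(\Sigma_1)$ is equivalent to $u\equiv 0$ on $\Sigma_2$, which I argue by contradiction, supposing $s:=\sup_{\Sigma_2}u>0$.

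At any point $p\in\Sigma_2$ where $u(p)>0$ and $p$ is not in the cut locus of $\Sigma_1$, the supremum in the definition of $u$ is attained by a past-directed timelike geodesic $\gamma$ from $p$ to some $q\in\Sigma_1$ of length $u(p)$, orthogonal to $\Sigma_1$ at $q$. I extend $u$ smoothly to a neighborhood of $p$ in $\R^{n,1}$ via the same formula, so that its ambient gradient is the unit tangent of $\gamma$. Along $\gamma$ the shape operator of the equidistant hypersurfaces $\{u=t\}$ evolves by a matrix Riccati equation; using the hypothesis that $\Sigma_1$ has mean curvature bounded below by $H_1$ as initial data gives the lower bound $H_1/(1-H_1 t)$ for the mean curvature of $\{u=t\}$. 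Combining this with the CMC-$H_2$ hypothesis on $\Sigma_2$, the trace formula for the induced Laplacian $\Delta_{\Sigma_2}u$, after splitting $\nabla u$ into its $T_p\Sigma_2$-tangential and normal parts, yields the key inequality
\[\Delta u \geq \frac{nH_1H_2\,u}{1-H_1 u}-\frac{nH_2|\nabla u|}{1-H_1 u}-\frac{|\nabla u|^2}{u}.\]
The same Riccati argument forces the focusing time from $\Sigma_1$ to be at most $1/H_1$, so automatically $u\leq 1/H_1$ on $\Sigma_2$.

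The conclusion is by the Omori-Yau maximum principle. Theorem \ref{thm: Cheng-Yau} gives that $(\Sigma_2,\I)$ is complete, and Proposition \ref{prop: Ricci lower bound} gives a uniform lower bound $-n^2H_2^2/4$ on its Ricci tensor; these are precisely the hypotheses for Omori-Yau applied to the bounded function $u$. It produces a sequence $p_k\in\Sigma_2$ with $u(p_k)\to s$, $|\nabla u(p_k)|\to 0$, and $\limsup\Delta u(p_k)\leq 0$. Plugging into the inequality and using $H_2>0$ together with $s<1/H_1$, one gets $0\geq nH_1H_2s/(1-H_1 s)>0$, a contradiction. Hence $u\equiv 0$, proving the theorem. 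The main obstacle I anticipate is making the derivation of the differential inequality rigorous at points where the Lorentzian distance function to $\Sigma_1$ is nonsmooth, where one must pass to viscosity-type upper barriers obtained by locally approximating $\Sigma_1$ by a smaller smooth piece, and verify that both the Riccati-based equidistant mean curvature bound and the trace identity pass to this approximation; this interplay between the ambient equidistant foliation from $\Sigma_1$ and the intrinsic geometry of $\Sigma_2$ is the technical heart of the argument.
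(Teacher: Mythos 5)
Your deduction of the Corollary from Theorem \ref{thm: comparison} is exactly right and is the same one-line argument the paper uses implicitly: apply the comparison theorem twice with the two roles swapped, and use that for an entire spacelike graph $J^+(\Sigma_i)$ is the epigraph of $u_i$, to conclude $u_1\le u_2\le u_1$.

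Your outline of Theorem \ref{thm: comparison} shares the paper's overall architecture --- the Lorentzian distance $u(p)=d(p,\Sigma_1)$ on $\Sigma_2$, the a priori bound $u<1/H_1$ (this is Lemma \ref{lem: C0 bound}), the differential inequality of Lemma \ref{lem: subsolution}, and an Omori-Yau-type argument using Cheng-Yau completeness and the Ricci lower bound $-n^2H_2^2/4$ --- but the route to the key inequality is genuinely different. You obtain it by flowing the equidistant hypersurfaces from $\Sigma_1$, bounding their mean curvature by $H_1/(1-H_1t)$ via the Riccati equation, and tracing the ambient Hessian of $u$ over $T\Sigma_2$. The paper instead introduces a smooth comparison map $q:\Sigma_2\to\Sigma_1$ whose derivative at $p_0$ is a Lorentzian boost of $T_{p_0}\Sigma_2$ onto $T_{q_0}\Sigma_1$ followed by a dilation by $\mu$, sets $\underline{u}(p)=d(p,q(p))$, and computes $\Delta\underline{u}$ directly via the chain rule and Equation \eqref{eqn: Delta q z}, optimizing $\mu=\tfrac{1}{1-H_1u}$ at the end. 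This $\underline{u}$ is smooth and touches $u$ from below, so the inequality for $u$ is obtained purely in the viscosity sense without ever needing the equidistant flow to be regular.

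This is where your plan has a genuine gap that the paper's construction is specifically designed to close. The classical Omori-Yau statement you quote (producing $p_k$ with $u(p_k)\to s$, $|\nabla u(p_k)|\to0$, $\limsup\Delta u(p_k)\le0$) requires $u\in C^2(\Sigma_2)$, but $d(\cdot,\Sigma_1)$ is only Lipschitz in general and can fail to be smooth at the cut/focal locus of $\Sigma_1$ --- and since $\Sigma_1$ is only assumed to have mean curvature bounded below (not CMC, so not necessarily convex), you cannot rule this out. You acknowledge the difficulty, but ``locally approximating $\Sigma_1$ by a smaller smooth piece'' produces lower barriers for $u$, which establishes the viscosity-subsolution property (as in Lemma \ref{lem: subsolution}), yet does not by itself give a sequence along which $|\nabla u|$ and $\Delta u$ are controlled: once $u$ is merely a viscosity subsolution, the Omori-Yau step itself has to be rerun in viscosity form. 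The paper does this by hand: it chooses $p_0$ with $u(p_0)>m-\epsilon$, considers $u-\epsilon r^2$ on the unit ball about $p_0$ (using the Ricci bound and \cite[Lemma 1]{MR431040} to control $\Delta r^2$, and nonpositive sectional curvature from Proposition \ref{prop: nonpos sec curv} to guarantee $r^2$ is smooth), and lets $\phi=u(p_1)-\epsilon r(p_1)^2+\epsilon r^2$ touch $u$ from above at the interior maximum $p_1$; the three explicit smallness conditions on $\epsilon$ then make $\phi$ a strict supersolution, contradicting the viscosity subsolution property at $p_1$. You would need to supply an argument of exactly this type to close your outline.
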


The essential point of the proof of Theorem \ref{thm: comparison} is to apply the maximum principle to the distance between the hypersurfaces, but some care has to be taken because we don't have enough a priori control over the hypersurfaces at infinity. In particular, the containment $\Sigma_2 \subset \mathcal{D}(\Sigma_1)$, which implies $\mathcal{D}(\Sigma_2) \subset \mathcal{D}(\Sigma_1)$, does not a priori mean that $\Sigma_2$ is asymptotically in the future of $\Sigma_1$. 
However, it tells us the following:

\begin{lemma} \label{lem: minattained}
If $\Sigma$ is an entire spacelike hypersurface and $p$ is a point in $\mathcal{D}(\Sigma)$, then the square distance $\langle q - p, q - p \rangle$ attains its nonpositive minimum over $q \in \Sigma$.
\end{lemma}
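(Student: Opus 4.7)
My plan is to combine Proposition~\ref{prop: D is open} with the achronality of $\Sigma$ to reduce the minimization to a compact set. First I would apply Proposition~\ref{prop: D is open} to obtain a compact subset $K \subset \Sigma$ with $p \in \mathcal{D}(K)$. The key intermediate claim, which I would then establish, is the containment
\[
\Sigma \cap \bigl(I^+(p) \cup I^-(p)\bigr) \subset K.
\]
To verify this, suppose $q \in \Sigma$ with $q - p$ timelike. The line $t \mapsto p + t(q-p)$, $t \in \R$, is an inextendible timelike (hence causal) curve through $p$, so by $p \in \mathcal{D}(K)$ it must meet $K$ at some point $q'$. Both $q$ and $q'$ lie in $\Sigma$ and on the same timelike line, hence by achronality of $\Sigma$ they necessarily coincide, proving $q \in K$.

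With this containment in hand, I would split into two cases. If $p \in \Sigma$, then achronality forces $\langle q-p, q-p\rangle \geq 0$ for every $q\in\Sigma$, with equality at $q = p$, so the nonpositive minimum is $0$ and is attained at $p$. If $p \notin \Sigma$, then since $p \in \mathcal{D}(\Sigma)$ any inextendible timelike line through $p$ meets $\Sigma$ in a point $q_0$ with $q_0 - p$ timelike, giving $\langle q_0 - p, q_0 - p\rangle < 0$. The sublevel set
\[
\{q \in \Sigma : \langle q-p, q-p\rangle \leq \langle q_0 - p, q_0 - p\rangle\}
\]
is then contained in $\Sigma \cap \bigl(I^+(p) \cup I^-(p)\bigr) \subset K$, and is closed in $\R^{n+1}$, because $\Sigma$ is a closed Lipschitz graph and the square distance is continuous. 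Being closed and bounded in $\R^{n+1}$, it is compact, so the continuous function $q \mapsto \langle q - p, q - p\rangle$ attains its minimum there, which is necessarily the global infimum over $\Sigma$.

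I expect the main obstacle to be proving the containment $\Sigma \cap \bigl(I^+(p) \cup I^-(p)\bigr) \subset K$. A priori $\Sigma$ extends to infinity and one must rule out the possibility of timelike separations from $p$ far out in $\Sigma$; the argument above leans crucially on the achronality of all of $\Sigma$, and not merely of the compact piece $K$ provided by Proposition~\ref{prop: D is open}. Once the containment is granted, the rest is a routine compactness and case analysis argument.
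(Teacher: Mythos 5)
Your proof is correct and follows essentially the same approach as the paper: both invoke Proposition~\ref{prop: D is open} to obtain a compact $K \subset \Sigma$ with $p \in \mathcal{D}(K)$, then use achronality of $\Sigma$ together with the domain-of-dependence property to reduce the minimization to $K$. The paper argues slightly more directly by observing that $K = \Sigma \cap \mathcal{D}(K)$, so the square distance is strictly positive on $\Sigma \setminus K$, which avoids your case split on whether $p \in \Sigma$ and the sublevel-set compactness step.
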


\begin{proof}
By Proposition \ref{prop: D is open}, there is a compact set $K \subset \Sigma$ such that $p \in \mathcal{D}(K)$. Since any pair of points in $\Sigma$ is spacelike separated, $K = \Sigma \cap \mathcal{D}(K)$. Hence, for points $q$ in $\Sigma$ but outside $K$, there is no causal geodesic from $p$ to $q$, so the square distance from $p$ to $q$ is positive. Since $p \in \mathcal{D}(K)$, there is some point in $K$ which is connected to $p$ by a causal geodesic, so the square distance to $p$ is nonpositive. Since $K$ is compact and the square distance is continuous, it attains its nonpositive minimum over $\Sigma$ at some point of $K$.
\end{proof}

For two points $p$ and $q$ in $\R^{n,1}$, we will write $z = \langle q-p, q-p \rangle$, where we view $z$ as a function of $p$ and $q$. Recall (Definition \ref{def: Lorentzian distance}) that if $q \in J^+(p)$, the Lorentzian distance is defined by $d(p,q) = \sqrt{-z}$, and if $\Sigma$ is an achronal set with $\Sigma \cap J^+(p) \neq \emptyset$ then 
\[
d(p, \Sigma) = \sup_{q \in \Sigma \cap J^+(p)} d(p,q)~.
\]

Now we can state the second lemma we need in the proof of the comparison theorem.

\begin{lemma} \label{lem: C0 bound}
If $\Sigma$ is an entire spacelike hypersurface with mean curvature bounded below by a positive constant $H_0$ and $p \in \mathcal{D}(\Sigma) \cap I^-(\Sigma)$, then $d(p, \Sigma) < 1/H_0$.
\end{lemma}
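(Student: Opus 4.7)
The plan is to apply the maximum principle to the Lorentzian square-distance function $z(q) = \langle q - p, q - p\rangle$ restricted to $\Sigma$, in order to deduce the non-strict bound $d(p, \Sigma) \leq 1/H_0$, and then to upgrade this to the strict bound by exploiting the openness of $\mathcal{D}(\Sigma)$ together with the reverse triangle inequality for Lorentzian distance.

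First, by Lemma \ref{lem: minattained} the function $z$ attains its nonpositive minimum on $\Sigma$ at some point $q_0$. Since $p \in I^-(\Sigma)$, there exists $q' \in \Sigma$ with $q' - p$ future timelike, and hence $z(q_0) \leq z(q') < 0$. The achronality of $\Sigma$ prevents $p$ from lying simultaneously in $I^-(\Sigma)$ and $I^+(\Sigma)$, so $q_0 - p$ must be future (rather than past) timelike; setting $\alpha := d(p, q_0) > 0$, the minimality of $z$ on $\Sigma$ together with the identity $d(p, q) = \sqrt{-z(q)}$ on $\Sigma \cap J^+(p)$ yields $d(p, \Sigma) = \alpha$.

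Next I would use that $q_0$ is an interior critical point of $z|_\Sigma$: the ambient gradient $2(q_0 - p)$ must be normal to $T_{q_0}\Sigma$, which together with the future-timelike direction forces $q_0 - p = \alpha\,\nu(q_0)$. Differentiating $z$ twice along a curve $\gamma \subset \Sigma$ through $q_0$ with initial velocity $X$, and decomposing the Minkowski acceleration $\ddot\gamma(0)$ into its tangential part $\nabla_X X$ and its normal part $\II(X, X)\,\nu$, together with $\langle \nu, \nu\rangle = -1$, shows the intrinsic Hessian of $z|_\Sigma$ at $q_0$ to be $2\bigl(\langle X, X\rangle - \alpha\,\II(X, X)\bigr)$. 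Minimality at $q_0$ forces this quadratic form to be nonnegative, and tracing with respect to an orthonormal basis of $T_{q_0}\Sigma$ gives $n\bigl(1 - \alpha H(q_0)\bigr) \geq 0$, so $\alpha \leq 1/H(q_0) \leq 1/H_0$.

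Finally, to upgrade this to a strict inequality, assume toward contradiction that $d(p, \Sigma) = 1/H_0$. By Proposition \ref{prop: D is open}, $\mathcal{D}(\Sigma)$ is open, so I can pick $p' \in I^-(p) \cap \mathcal{D}(\Sigma)$, and transitivity of the chronological order places $p'$ in $I^-(\Sigma)$ as well. The reverse triangle inequality for Lorentzian distance, applied to the chronological chain $p' < p < q_0$, then yields
\[
d(p', \Sigma) \geq d(p', q_0) \geq d(p', p) + d(p, q_0) = d(p', p) + \tfrac{1}{H_0} > \tfrac{1}{H_0},
\]
which contradicts the non-strict bound of the previous paragraph applied with $p'$ in place of $p$. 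The step I expect to require the most care is the verification in the second paragraph that the minimizer $q_0$ of $z$ actually realizes the Lorentzian distance $d(p, \Sigma)$; this hinges on $q_0 - p$ being future rather than past timelike, which is precisely where the achronality of $\Sigma$ enters in an essential way.
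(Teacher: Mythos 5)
Your proof is correct and takes essentially the same approach as the paper: find the minimizer $q_0$ of the squared distance via Lemma \ref{lem: minattained}, trace the second-order condition at $q_0$ to get the non-strict bound $d(p,\Sigma)\leq 1/H(q_0)\leq 1/H_0$, and then perturb $p$ into its own past using the openness of $\mathcal{D}(\Sigma)$ to upgrade to strict inequality. The only cosmetic differences are that the paper computes $\Delta z$ via the identity $\Delta q = nH\nu$ while you compute the Hessian along curves and trace (these are the same calculation), and the paper moves $p$ along the specific direction $q_0-p$ while you take an arbitrary $p'\in I^-(p)\cap\mathcal{D}(\Sigma)$ and invoke the reverse triangle inequality; both perturbation arguments are correct. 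Your explicit achronality argument for why $q_0 - p$ must be future (not past) timelike is a nice touch that the paper leaves implicit.
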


\begin{proof} {This is a straightforward application of the maximum principle, which we describe in some detail, as we will build off of the computation in the proof of the next lemma.} By Lemma \ref{lem: minattained}, there is a point $q_0 \in \Sigma$ at which the square distance to $p$ attains its negative minimum, and thus the Lorentzian distance attains its positive maximum: $d(p, q_0) = d(p, \Sigma)$. We compute the intrinsic Laplacian on $\Sigma$ of the square distance to $p$ at the point $q_0$.

For a hypersurface embedded in $\R^{n,1}$ by a map $q$ with mean curvature $H(q)$ with respect to its unit normal $\nu$, the intrinsic Laplacian on the hypersurface of the embedding is given by
\begin{equation} \label{eqn: Delta q}
\begin{split}
\Delta q = n H(q) \nu.
\end{split}
\end{equation}
If we call the square distance to $p$, as a function on $\Sigma$, by $z(q) = \langle q - p, q - p \rangle$, then
\begin{equation} \label{eqn: Delta q z}
\begin{split}
\Delta z &= 2\langle \Delta q, q-p \rangle + 2 \langle \nabla_i q, \nabla^i q \rangle\\
& = 2nH(q) \langle \nu, q-p \rangle + 2 n.
\end{split}
\end{equation}
Here the term $\langle \nabla_i q, \nabla^i q \rangle$ is the pointwise Dirichlet energy of the embedding, written in Einstein notation. Since the embedding is isometric, the pointwise Dirichlet energy is equal to the rank, $n$.

Since the point $q_0$ is a critical point for $z$, the future normal vector $\nu$ at this point is parallel to $q_0-p$. More precisely, \[
q_0-p = d(p, q_0) \nu.
\]
Therefore, using that $\langle \nu, \nu \rangle = -1$, we get
\begin{equation} \label{eqn: Delta p rho}
\begin{split}
\Delta z (q_0)&=  2n(-H(q_0) d(p,q_0) + 1)\\
\end{split}
\end{equation}
Since $q_0$ is a minimizer for the square distance $z$, we must have $\Delta z (q_0) \geq 0$, and hence $d(p, \Sigma) = d(p,q_0) \leq 1/H(q_0)\leq  1/H_0$.

It remains only to rule out equality. By Proposition \ref{prop: D is open}, $\mathcal{D}(\Sigma)$ is open, so for $\epsilon$ small enough, the point $p - \epsilon(q_0 - p)$ is still in $\mathcal{D}(\Sigma)$. Running the same argument with $p$ replaced by $p - \epsilon(q_0 - p)$, we conclude that the inequality must be strict. 
\end{proof}

This bound has the following important consequence:

\begin{cor}\label{cor: dom of dep regular domain}
If $\Sigma$ is an entire hypersurface with mean curvature bounded below by a positive constant, then the domain of dependence of $\Sigma$ is a regular domain.
\end{cor}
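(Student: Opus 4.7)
My plan is to verify that $\mathcal{D}(\Sigma)$ satisfies the two defining features of a regular domain: its past horizon $\mathcal{H}^-(\Sigma)$ is a nonempty entire past horizon, and it admits at least one spacelike support plane. Let $H_0>0$ denote a lower bound for the mean curvature of $\Sigma$.

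First I would show that $\mathcal{H}^-(\Sigma)$ is nonempty. Fix any $q_0 \in \Sigma$ with future unit normal $\nu$, and for $t \geq 1/H_0$ consider the past point $p_t := q_0 - t\nu$. Then $p_t \in I^-(q_0) \subset I^-(\Sigma)$ and $d(p_t, \Sigma) \geq d(p_t, q_0) = t \geq 1/H_0$, so the contrapositive of Lemma \ref{lem: C0 bound} forces $p_t \notin \mathcal{D}(\Sigma)$. Hence $\mathcal{D}(\Sigma) \neq \R^{n,1}$, and since $\Sigma$ is entire, $\mathcal{H}^-(\Sigma)$ is nonempty and entire, with $\mathcal{D}(\Sigma) = I^+(\mathcal{H}^-(\Sigma))$.

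Next I would argue by contradiction to obtain the spacelike support plane. By Proposition \ref{prop: support function} together with the characterization preceding Definition \ref{def: regular domain}, if $\mathcal{H}^-(\Sigma)$ admits no spacelike support plane then its null support function is finite at exactly one point of $\mathbb{S}^{n-1}$, so $\mathcal{H}^-(\Sigma)$ is a single null hyperplane $\Pi$. After applying an isometry of $\R^{n,1}$, normalize $\Pi = \{t = x_1\}$ so that $\mathcal{D}(\Sigma) = \{t > x_1\}$; writing $\Sigma$ as the graph of $u : \R^n \to \R$, one has $u(\mathbf{x}) > x_1$ everywhere. For any $\mathbf{x}_0 \in \R^n$ and any $c > (\mathbf{x}_0)_1$, the point $p := (\mathbf{x}_0, c)$ lies in $\mathcal{D}(\Sigma)$. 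If $u(\mathbf{x}_0) > c$ then $p \in I^-(\Sigma)$, and Lemma \ref{lem: C0 bound} applied to $p$ together with $(\mathbf{x}_0, u(\mathbf{x}_0)) \in \Sigma$ gives $u(\mathbf{x}_0) - c \leq d(p, \Sigma) < 1/H_0$; the case $u(\mathbf{x}_0) \leq c$ is trivial. Letting $c \to (\mathbf{x}_0)_1^+$ I would conclude $u(\mathbf{x}) \leq x_1 + 1/H_0$ pointwise, so $\Sigma$ is trapped in the null slab $\{x_1 < t \leq x_1 + 1/H_0\}$.

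The hard part will be deriving a contradiction from this slab-trapped configuration. My strategy is to exploit the null translational symmetry $T_a(p) := p + a(e_1 + e_{n+1})$, which preserves both the slab and $\mathcal{D}(\Sigma)$: each translate $\Sigma_a := T_a(\Sigma)$ is an entire spacelike hypersurface with the same domain of dependence and the same mean curvature lower bound $H_0$, and the strict 1-Lipschitz control of $u$ along $e_1$ (from $|\nabla u| < 1$) forces distinct translates to be pairwise disjoint. As $a \to -\infty$, the graphing functions $u_a(\mathbf{y}) = u(\mathbf{y} - a e_1) + a$ collapse locally uniformly onto the null height $\mathbf{y} \mapsto y_1$, so the future unit normals of $\Sigma_a$ degenerate to the null direction parallel to $\Pi$. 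I expect to conclude from this degeneracy, in combination with the uniform positive lower bound $H_0$ on the mean curvature of every $\Sigma_a$, the desired contradiction.
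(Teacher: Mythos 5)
Your opening steps are on the right track and closely parallel the paper's: you show $\mathcal{H}^-(\Sigma)$ is nonempty using Lemma \ref{lem: C0 bound}, reduce the ``no spacelike support plane'' case to $\mathcal{H}^-(\Sigma)$ being a single null hyperplane $\Pi$, and derive the slab confinement $x_1 < u(\mathbf{x}) \leq x_1 + 1/H_0$. But the proof is not complete, for two reasons.

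First, and most seriously, the ``hard part'' is only a sketch and the sketch has a gap you cannot easily close. You claim that $u_a(\mathbf{y}) = u(\mathbf{y} - a e_1) + a$ collapses to $\mathbf{y}\mapsto y_1$ as $a \to -\infty$, but the slab bound only gives $u_a(\mathbf{y}) \in (y_1, y_1 + 1/H_0]$, and since $u_a$ is monotone in $a$ the pointwise limit could well be $y_1 + c$ for some $c \in (0,1/H_0]$ (indeed, translation-invariance of the limit only forces $\lim u_a(\mathbf{y}) - y_1$ to be constant along $e_1$, not to be zero). Moreover, even granting some degeneration of normals, passing to a limit surface requires compactness estimates on the second fundamental form that you have not established, and it is not a priori clear that a mean curvature lower bound is incompatible with the family degenerating to a null plane. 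So as written, there is no contradiction.

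Second, you assert $\mathcal{D}(\Sigma) = I^+(\mathcal{H}^-(\Sigma))$ without justification. This amounts to saying the future horizon $\mathcal{H}^+(\Sigma)$ is empty, and the paper does give a separate (short) argument for it. For the slab containment itself you only apply Lemma \ref{lem: C0 bound} to points $p \in I^-(\Sigma) \cap I^+(\Pi)$, which indeed lie in $\mathcal{D}(\Sigma)$ by definition of $\mathcal{H}^-$, so that piece is fine if stated more carefully; but the final conclusion that $\mathcal{D}(\Sigma)$ is the future of its past horizon still needs the future horizon to vanish, and you never address that.

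The paper's route is both complete and much more direct. Instead of translating and degenerating, it fixes $q_0 \in \Sigma$ and considers the past hyperboloid $\{p : d(p,q_0) = r\}$ for any $r > 1/H_0$. This hyperboloid is asymptotic to the past null cone of $q_0$, and the past null ray from $q_0$ in the null generator direction of $\Pi$ stays at a fixed positive height above $\Pi$. Going far enough out along the hyperboloid toward this direction, one finds a point $p$ with $d(p,q_0) = r$ that still lies in $I^+(\Pi) \cap I^-(\Sigma) \subset \mathcal{D}(\Sigma)$, so $d(p,\Sigma) \geq r > 1/H_0$, contradicting Lemma \ref{lem: C0 bound} in one stroke. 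This handles both the ``single null hyperplane'' case and the ``empty past horizon'' case simultaneously. The paper then dispatches the future horizon with a sandwich argument: a nonempty entire future horizon lies in the past of some null hyperplane, while a past horizon of a regular domain has a spacelike support plane, and no entire hypersurface can sit between a spacelike plane and a null plane. You should adopt this more direct argument in place of the degeneration sketch, and add the future-horizon step.
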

\begin{proof}
By Proposition \ref{prop: entire past horizons}, the past horizon of $\Sigma$ is either empty, a single null hyperplane, or the past horizon of a regular domain. We show that in either of the first two cases, there would exist points $p \in \mathcal{D}(\Sigma) \cap I^-(\Sigma)$ such that $d(p,\Sigma)$ was arbitrarily large. For any point $q_0 \in \Sigma$, we have $d(p, \Sigma) \geq d(p, q_0)$. The level sets of the Lorentzian distance to $q_0$ are hyperboloids asymptotic to its past null cone; if the past horizon is empty or a single null hyperplane, each of these hyperboloids meets $\mathcal{D}(\Sigma)$, so we can make $d(p, q_0)$ arbitrarily large for $p \in \mathcal{D}(\Sigma)$. 

Hence, the past horizon $\mathcal{H}^-(\Sigma)$ is equal to the past horizon of some regular domain. We complete the proof by showing that future horizon of $\Sigma$ is empty. Otherwise, it would be an entire future horizon lying entirely in the future of $\mathcal{H}^-(\Sigma)$. Since $\mathcal{H}^-(\Sigma)$ is the past horizon of a regular domain, it has a spacelike support hyperplane. But by Proposition \ref{prop: entire past horizons} applied to future horizons, every nonempty entire future horizon is in the past of some null hyperplane. Clearly, no entire hypersurface can be sandwiched between a spacelike plane and a null plane. Hence, $\mathcal{H}^+(\Sigma)$ is empty and $\mathcal{D}(\Sigma) = I^+(\mathcal{H}^-(\Sigma))$, which is a regular domain.
\end{proof}

Now suppose that $\Sigma_1$ and $\Sigma_2$ are as in the statement of Theorem \ref{thm: comparison}; that is to say, $\Sigma_1$ and $\Sigma_2$ are entire spacelike hypersurfaces with $\Sigma_2 \subset \mathcal{D}(\Sigma_1)$, $\Sigma_1$ has mean curvature bounded below by $H_1$, $\Sigma_2$ has constant mean curvature $H_2$, and $H_1 \geq H_2 > 0$. Suppose for the sake of contradiction that $\Sigma_2$ meets the past of $\Sigma_1$. For $p \in \Sigma_2 \cap I^-(\Sigma_1)$, define
\[
u(p) = d(p, \Sigma_1)
\]
Since $\Sigma_2 \subset \mathcal{D}(\Sigma_1)$, we know by Lemma \ref{lem: C0 bound} that the function $u$ is bounded above by $1/H_1$.

\begin{lemma} \label{lem: subsolution}
 The inequality
\begin{equation} \label{eqn: laplacian of d}
\begin{split}
\Delta u \geq \frac{nH_1H_2u}{1-H_1 u} - \frac{n H_2 |\nabla u|}{1 - H_1 u} - \frac{|\nabla u|^2}{u}
\end{split}
\end{equation}
holds in the viscosity sense (i.e. $u$ is a viscosity subsolution).
\end{lemma}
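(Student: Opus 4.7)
The plan is to verify the inequality in the viscosity sense by exhibiting, for each $p_0 \in \Sigma_2 \cap I^-(\Sigma_1)$, a smooth local sub-barrier $\tilde u$ with $\tilde u(p_0) = u(p_0)$, $\tilde u \le u$ in a neighborhood of $p_0$ in $\Sigma_2$, and for which the claimed pointwise inequality holds at $p_0$. Once this is achieved, the viscosity condition follows automatically: if a $C^2$ function $\phi$ touches $u$ from above at $p_0$, then $\phi - \tilde u \ge 0$ with equality at $p_0$, which forces $\nabla \phi(p_0) = \nabla \tilde u(p_0)$ and $\Delta \phi(p_0) \ge \Delta \tilde u(p_0)$, so the inequality transfers from $\tilde u$ to $\phi$.

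The natural barrier to use is the Lorentzian Fermi distance from $\Sigma_1$. By Lemma \ref{lem: minattained} there exists $q_0 \in \Sigma_1$ with $d(p_0, q_0) = u(p_0) =: u_0$; at such a maximum, $q_0 - p_0 = u_0\,\nu_1(q_0)$, where $\nu_1$ denotes the future unit normal of $\Sigma_1$. By Lemma \ref{lem: C0 bound} one has $u_0 < 1/H_1$, and combined with Cheng--Yau convexity of $\Sigma_1$ (Theorem \ref{thm: Cheng-Yau}), the past normal exponential from $\Sigma_1$ is a local diffeomorphism near $p_0$. I would then define $\tilde u(p)$ to be the Fermi time of this flow, i.e.\ $p = q(p) - \tilde u(p)\,\nu_1(q(p))$ for a smooth foot-of-normal map $q$. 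Since $q(p) \in \Sigma_1 \cap J^+(p)$, one has $\tilde u(p) = d(p, q(p)) \le u(p)$, with equality at $p_0$.

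Differentiation of this defining identity gives $\nabla^{\R^{n,1}}\tilde u = \nu_1 \circ q$, so in the decomposition $\nu_1 = a\,\nu_2 + W$ at $p_0$, with $W \in T_{p_0}\Sigma_2$ and $a = -\langle \nu_1,\nu_2\rangle = \sqrt{1+|W|^2}$, we get $|\nabla^{\Sigma_2}\tilde u| = |W|$. A second differentiation yields the Fermi-type ambient Hessian $\Hess^{\R^{n,1}}\tilde u(X,Y) = \langle A\,P_V X,\,P_V Y\rangle$, where $A := (I - u_0 B_1)^{-1}B_1$ is a self-adjoint operator on $V := \nu_1^\perp$, $B_1$ is the shape operator of $\Sigma_1$ at $q_0$, and $P_V$ is the Lorentzian projection onto $V$. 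Combined with the standard splitting $\Delta^{\Sigma_2}f = \tr_{T\Sigma_2}(\Hess^{\R^{n,1}} f) + nH_2\langle \nu_2,\nabla^{\R^{n,1}} f\rangle$ and expansion in an orthonormal frame of $T_{p_0}\Sigma_2$ adapted to $W$, this produces an explicit formula for $\Delta^{\Sigma_2}\tilde u$ at $p_0$ as a combination of $\tr_V(A)$, a $W$-weighted spectral term $|W|^2\langle A\bar e,\bar e\rangle$ for a distinguished unit vector $\bar e \in V$, and the angle contribution $-nH_2 a$.

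The pointwise inequality will then follow from three ingredients. Jensen's inequality applied to the convex function $x \mapsto x/(1 - u_0 x)$ on $[0, 1/u_0)$ and the nonnegative eigenvalues of $B_1$ (with mean $H_1$) gives $\tr_V(A) \ge nH_1/(1 - H_1 u_0)$; positive semi-definiteness of $A$ controls the $|W|^2$-weighted spectral term; and the identity $a - 1 = |W|^2/(a+1)$ reconciles the angle mismatch between $\nu_1$ and $\nu_2$ with the $|\nabla u|$ and $|\nabla u|^2/u$ terms on the right of the inequality. I expect the main obstacle to be this final algebraic reconciliation: the $|W|$-linear and $|W|^2$ contributions on the right-hand side must be extracted precisely from the interplay between $a$, $|W|$, the Jensen bound, and the denominators $1 - H_1 u$, so that the coefficient of $-|W|^2/u_0$ comes out to exactly $1$ and the angle-mismatch term matches $-nH_2 |W|/(1 - H_1 u_0)$.
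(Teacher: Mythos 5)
Your proposal takes a genuinely different route from the paper, and the differences matter. The paper does \emph{not} use the Fermi (nearest-point) projection. Instead, it builds the comparison sub-barrier $\underline u(p)=d(p,q(p))$ from a map $q:\Sigma_2\to\Sigma_1$ whose one-jet at $p_0$ is \emph{chosen freely}: the derivative $\partial q/\partial p$ is taken to be the isometric boost $T_{p_0}\Sigma_2\to T_{q_0}\Sigma_1$ scaled by a constant $\mu$. This introduces a free parameter which is optimized at the end ($\mu=1/(1-H_1 u)$), and it has the crucial feature that the resulting estimate depends only on $\mathrm{tr}\,B_1 = nH(q_0)\ge nH_1$, never on the individual eigenvalues of $B_1$. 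Your Fermi choice forces $\partial q/\partial p$ to have the direction-dependent scale $(1-u_0\lambda_i)^{-1}$, so you are committed to controlling the full spectrum of $B_1$, which is what pushes you to Jensen.

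Several pieces of your argument do not go through as stated, and they trace back to one misconception: \textbf{$\Sigma_1$ is not assumed to have constant mean curvature}. In Theorem~\ref{thm: comparison} (and hence in this lemma) $\Sigma_1$ only has mean curvature bounded below by $H_1>0$. Therefore Theorem~\ref{thm: Cheng-Yau} does \emph{not} apply to $\Sigma_1$, and you cannot conclude that $B_1\ge 0$. Concretely this affects you in three places. First, your invocation of ``Cheng--Yau convexity of $\Sigma_1$'' to get the local diffeomorphism is wrong; the fact you actually need — that $I-u_0B_1\ge 0$ — comes from $q_0$ being a minimizer of the square-distance restricted to $\Sigma_1$, not from convexity of $\Sigma_1$. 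But even this only gives $I-u_0B_1\ge 0$, not $>0$, so the normal exponential may be degenerate and your foot-of-normal map $q(p)$ may fail to be locally defined. The paper's boost-plus-scale construction sidesteps this: since $q(p)$ is chosen, not forced, its smoothness near $p_0$ is automatic. Second, your Jensen step is phrased as applying convexity of $x\mapsto x/(1-u_0x)$ to ``nonnegative eigenvalues of $B_1$ (with mean $H_1$)''; the eigenvalues need not be nonnegative and their mean is $H(q_0)$, not $H_1$. (The Jensen argument can in fact be salvaged because $x\mapsto x/(1-u_0x)$ is convex and increasing on all of $(-\infty,1/u_0)$, but you should say so rather than invoke a false positivity.) Third, and most seriously, your claim that ``positive semi-definiteness of $A$ controls the $|W|^2$-weighted spectral term'' has no justification: $A=(I-u_0B_1)^{-1}B_1$ inherits negative eigenvalues from $B_1$ whenever $B_1$ has them, so $A$ need not be positive semi-definite. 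Without an argument that the sign of the $W$-weighted term actually helps you, this is a gap.

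Finally, you explicitly flag that the ``algebraic reconciliation'' — matching the $|W|$-linear and $|W|^2$ contributions to the $-nH_2|\nabla u|/(1-H_1u)$ and $-|\nabla u|^2/u$ terms — is unverified. In the paper's version this reconciliation is essentially trivial: the cross term is bounded by $-4n\mu$ using the simple estimate $|\langle\nu_1,\nu_2\rangle|\le 1+|\nabla\underline u|$, the other terms are quadratic in $\mu$, and optimization in $\mu$ plus the identity $z=-\underline u^2$ produces the right-hand side of~\eqref{eqn: laplacian of d} in one step. If you want to pursue the Fermi route you will need (i) a perturbation argument (as in the end of the proof of Lemma~\ref{lem: C0 bound}) or some other device to avoid the degenerate case $1/u_0\in\mathrm{spec}\,B_1$, (ii) an honest sign analysis of the $W$-weighted term without assuming $A\ge 0$, and (iii) to actually carry out the reconciliation — none of which is done here.
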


We recall the definition of a viscosity subsolution. Let $a^{ij}(x, u, D u) D_{ij} u = F(x, u, D u)$ be an elliptic quasilinear differential equation, in the sense that $a^{ij}$ is a positive definite symmetric matrix. We say a function $\phi$ touches $u$ from above at a point $p$ if $\phi(p) = u(p)$ and $\phi \geq u$ in a neighborhood of $p$. 

\begin{defi} An upper semicontinuous function $u$ is a \emph{viscosity subsolution} of the equation $a^{ij}(x, u, D u) D_{ij} u = F(x, u, D u)$ if for any point $p_0$ and any $C^2$ function $\phi$ which touches $u$ from above at $p_0$, the inequality $a^{ij}(x, \phi, D \phi) D_{ij} \phi \geq F(x, \phi, D \phi)$ holds at the point $p_0$. It is a \emph{strict} viscosity subsolution if for any $\phi$ as above, strict inequality holds.
\end{defi}


\begin{proof}[Proof of Lemma \ref{lem: subsolution}]
Let $p_0$ be an arbitrary point in $\Sigma_2$ at which $u > 0$. By Lemma \ref{lem: minattained}, there is some point $q_0 \in \Sigma_1$ for which $d(p_0, q_0) = d(p_0, \Sigma_1)$ (Figure \ref{fig: p and q}). To estimate $\Delta u$ from below in the viscosity sense at $p_0$, we will find a smooth comparison subsolution $\underline{u}$ which touches $u$ from below at $p_0$ in the sense that $\underline{u} \leq u$ and $\underline{u}(p_0) = u(p_0)$. Let $q(p): \Sigma_2 \to \Sigma_1$, to be determined, be a smooth map with $q(p_0) = q_0$. In this way the function
\[
\underline{u}(p) = d(p, q(p))
\]
touches $u$ from below at $p_0$. Of course, it will be sufficient to define the germ of $q$ at $p_0$. Define also
\[
z = - \underline{u}^2 = \langle q-p, q-p \rangle~.
\]
\begin{figure}[!htb] 
\begin{center}
\begin{tikzpicture}
  \draw (.5,0) .. controls (2.5,-2) and (4.5,-2) .. (6.5,0);
  \draw (1,1) .. controls (3,-2) and (5,-3) .. (7,-1);
 \fill[black] (5,-2.05) circle (.05);
  \fill[black] (5.5,-.85) circle (.05);
  \node [above] (p0) at  (5,-2.05) {$p_0$};
    \node [above] (q0) at  (5.5,-.85) {$q_0$};
       \node [right] (S1) at  (6.5,0) {$\Sigma_1$};
           \node [right] (S2) at  (7,-1) {$\Sigma_2$};
  \end{tikzpicture}
\end{center}
\caption{\label{fig: p and q} The relative positions of $p_0$ and $q_0$.}
\end{figure}
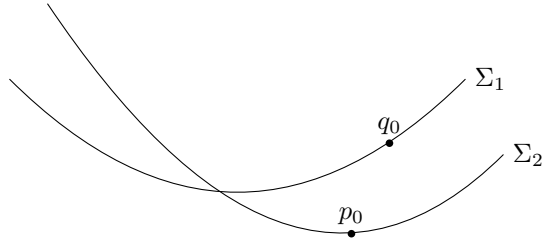

We first apply the chain rule to compute the Laplacian of $z$ at $p_0$. In Equations (\ref{eqn: chain rule 1}) and (\ref{eqn: chain rule}) below, the function $z$ on the left hand side should be interpreted as function on $\Sigma_2$, and on the right hand side as a function on $\Sigma_2 \times \Sigma_1$. The second partial derivatives should be interpreted as covariant derivatives or alternatively in normal coordinates $p^a$ on $\Sigma_2$ and $q^i$ on $\Sigma_1$ at the points $q_0\in\Sigma_1$ and $p_0\in\Sigma_2$.
\begin{equation} \label{eqn: chain rule 1}
\begin{split}
\frac{d z}{d p^a} = \frac{\partial z}{\partial p^a} + \frac{\partial z}{\partial q^i} \frac{\partial q^i}{\partial p^a}
\end{split}
\end{equation}
\begin{equation} \label{eqn: chain rule}
\begin{split}
\Delta z = \sum_a \left( \frac{\partial^2 z}{\partial p^a \partial p^a} + 2 \frac{\partial^2 z}{\partial p^a \partial q^i} \frac{\partial q^i}{\partial p^a} + \frac{\partial^2 z}{\partial q^i \partial q^j} \frac{\partial q^i}{\partial p^a} \frac{\partial q^j}{\partial p^a} + \frac{\partial z}{\partial q^i}\frac{\partial^2 q^i}{\partial p^a \partial p^a} \right)
\end{split}
\end{equation}

We now need to choose the function $q(p)$ to give a good upper bound for $\Delta z$. To motivate this choice, we begin with a couple of observations. First, the only term in Equation (\ref{eqn: chain rule}) which involves second derivatives of $q$ with respect to $p$ is the final term. Luckily, this term vanishes at $q_0$ because $q_0$ minimizes the square distance to $p_0$, and so $\frac{\partial z}{\partial q^i}(p_0,q_0) = 0.$ Therefore $\Delta z(p_0)$ depends only on the one-jet of the map $q(p)$. 

Second, by Equation (\ref{eqn: Delta q z}), the mean curvature is related to the intrinsic Laplacian of the square distance function. So, if our estimate for $\Delta z$ is to depend on the mean curvature $H_1$ as well as $H_2$, we had better have the operator $\frac{\partial^2}{\partial q^i \partial q^j} \frac{\partial q^i}{\partial p^a} \frac{\partial q^j}{\partial p^a}$ be a multiple of the Laplacian on $\Sigma_1$. In other words, we need the derivative $\frac{\partial q^i}{\partial p^a}$ to be an isometry up to scale.

Finally, given this constraint, we wish to minimize the cross term $2 \frac{\partial^2 z}{\partial p^a \partial q^i} \frac{\partial q^i}{\partial p^a}$, to give the best possible upper bound for $\Delta z$. 
\begin{figure}[!htb] 
\begin{center}
\begin{tikzpicture}
  \draw (-4,0) -- (4,0) -- (5.5,2) -- (-2.5,2) -- cycle;
  \draw (-4.25,-1) -- (4.25,1) -- (5.75,3) -- (-2.75,1) -- cycle;
  \draw (0,0) -- (1.5,2);
  \draw[thick, ->] (3.5,.1) .. controls (3.5,.4) .. (3.6,.7);
  \draw[thick, ->] (-3.3,-.1) .. controls (-3.3,-.4) .. (-3.4,-.7);
  \node[right] (TS2) at (4.5,.5) {$T_{p_0} \Sigma_2$};
  \node[right] (TS1) at (5.5,2.5) {$T_{q_0} \Sigma_1$};
\end{tikzpicture}
\end{center}
\caption{\label{fig: boost} An isometric boost between two spacelike hyperplanes.}
\end{figure}
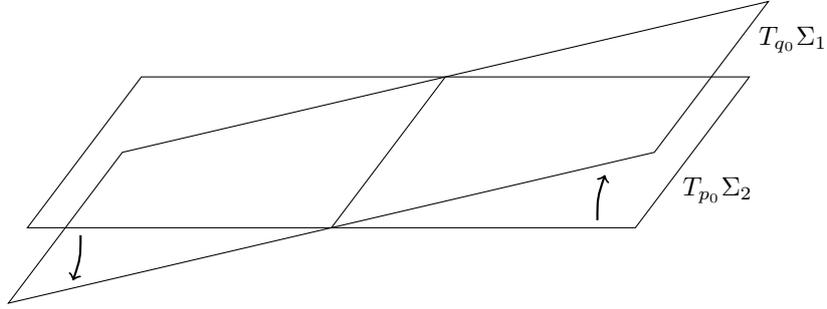
For a constant $\mu$ to be chosen in a minute, we choose the derivative $\frac{\partial q^i}{\partial q^a}$ to be the linear map $T_{p_0} \Sigma_2 \to T_{q_0}\Sigma_1$ which first isometrically boosts $T_{p_0} \Sigma_2$ onto $T_{q_0}\Sigma_1$ as in Figure \ref{fig: boost} and then scales by a factor of $\mu$. Since this map is in particular $\mu$ times an isometry, the third term in Equation (\ref{eqn: chain rule}) simplifies to
\[
\sum_a \frac{\partial^2 z}{\partial q^i \partial q^j} \frac{\partial q^i}{\partial p^a} \frac{\partial q^j}{\partial p^a} = \mu^2 \sum_i\frac{\partial^2 z}{\partial q^i \partial q^i}.
\]
Moreover we have a good estimate from above for the cross term. Indeed,
\begin{equation} \label{eqn: cross term}
\begin{split}
\sum_a 2 \frac{\partial^2 z}{\partial p^a \partial q^i} \frac{\partial q^i}{\partial p^a} &= \sum_a -4 \left\langle \frac{\partial}{\partial p^a}, \frac{\partial}{\partial q^i}\right\rangle \frac{\partial q^i}{\partial p^a}\\
&= \sum_a -4 \left\langle \frac{\partial}{\partial p^a}, \frac{\partial q^i}{\partial p^a}\frac{\partial}{\partial q^i}\right\rangle \\
&= - 4 \mu (n - 1 + |\langle \nu_1, \nu_2 \rangle|) \\
& \leq - 4 n \mu
\end{split}
\end{equation}
Here in the third equality, we have used that in $n-1$ directions the derivative of $q$ simply rescales by $\mu$, and in the final direction, the inner product between a unit vector and its image under the isometric boost of Figure \ref{fig: boost} is the same, up to sign, as the inner product of the normal vectors to the two planes.

Having chosen the one-jet of the map $q(p)$, we use Equation (\ref{eqn: Delta q z}) to write $\Delta z$ in terms of the mean curvatures $H_1$ and $H_2$. Namely, we have at the point $(p_0, q_0)$,
\begin{equation} 
\begin{split}
\sum_a\frac{\partial^2 z}{\partial p^a \partial p^a} &= -2n H_2 \langle \nu_2, q_0-p_0 \rangle + 2n \\
\sum_i\frac{\partial^2 z}{\partial q^i \partial q^i} &= 2nH(q_0)\langle \nu_1, q_0-p_0 \rangle + 2n  \\
\end{split}
\end{equation}
where $H(	q_0) \geq H_1$ is the mean curvature of $\Sigma_1$ at $q_0$. Keeping in mind that $q_0$ minimizes distance to $p_0$, so that
\[
q_0 - p_0 = d(p_0, q_0) \nu_1 = \underline{u}(p_0) \nu_1 = u(p_0) \nu_1,
\]
these become respectively
\begin{equation} \label{eqn: two Deltas and H}
\begin{split}
\sum_a\frac{\partial^2 z}{\partial p^a \partial p^a} & = 2n(H_2 u(p_0) |\langle \nu_1, \nu_2 \rangle| + 1) \\
\sum_i\frac{\partial^2 z}{\partial q^i \partial q^i} &\leq 2n(- H_1u(p_0) + 1) ~. \\
\end{split}
\end{equation}

The last ingredient we need is to express the term $|\langle \nu_1, \nu_2 \rangle|$ in terms of $|\nabla \underline{u}|$. Since $q_0$ is a minimizer of $d(p, q)$, the partial derivative with respect to $q$ vanishes, and so the total derivative at $p_0$ of $\underline{u} = d(p, q(p))$ is equal to its partial derivative with respect to $p$. This partial derivative is the projection onto $T_{p_0} \Sigma_2$ of the gradient of the distance as a function on $\R^{n,1}$, which is the vector $(q_0 - p_0)/d(p_0, q_0)$, which is just $\nu_1$. Writing the length of $\nu_1$ as the difference of its tangential and orthogonal components on $\Sigma_2$ gives $-1 = |\nabla \underline{u}|^2 - |\langle \nu_1, \nu_2 \rangle|^2$, in other words
\[
|\langle \nu_1, \nu_2 \rangle| = \sqrt{1 + |\nabla \underline{u}|^2}.
\]
We will just use the naive bound
\begin{equation} \label{eqn: gradient term}
\begin{split}
|\langle \nu_1, \nu_2 \rangle| \leq 1 + |\nabla \underline{u}|.
\end{split}
\end{equation}

Plugging (\ref{eqn: cross term}), (\ref{eqn: two Deltas and H}), and (\ref{eqn: gradient term})  into Equation (\ref{eqn: chain rule}) gives at the point $p_0$,
\begin{equation} \label{eqn: inequality with mu}
\begin{split}
\Delta z \leq 2n( H_2 u (1 + |\nabla \underline{u}|) + 1) - 4n\mu + 2n(-H_1 u + 1 )\mu^2~.
\end{split}
\end{equation}
We now choose $\mu$. By Lemma \ref{lem: C0 bound}, $-H_1 u + 1 > 0$, so the optimal choice is $\mu = \frac{1}{-H_1 u + 1}$, which after some algebra gives
\[
\Delta z \leq \frac{2n((H_2 - H_1)u + H_2 u|\nabla \underline{u}| - H_1H_2 u^2)}{1 - H_1 u}~.
\]
Finally, using $H_2 \leq H_1$ and $\underline{u}(p_0) = u(p_0) > 0$, together with the definition $z = -\underline{u}^2$, we arrive at
\[
\Delta \underline{u} \geq \frac{nH_1H_2u}{1-H_1 u} - \frac{n H_2 |\nabla \underline{u}|}{1 - H_1 u} - \frac{|\nabla \underline{u}|^2}{u}~.
\]
If $\phi$ is any smooth function that touches $u$ from above, then $\phi$ also touches $\underline u$ from above, and so $\nabla \phi(p_0) = \nabla \underline{u}(p_0)$ and $\Delta \phi(p_0) \geq \Delta \underline{u}(p_0)$. Hence Equation (\ref{eqn: laplacian of d}) holds in the viscosity sense.
\end{proof}

\begin{proof}[Proof of Theorem \ref{thm: comparison}]
We now complete the proof of the comparison theorem. Define $u$ as above on $\Sigma_2 \cap I^-(\Sigma_1)$, and extend it continuously by 0 to a function on all of $\Sigma_2$. The theorem is proved by showing that $u$ is nowhere positive. Let 
\[
F(u,\xi) = \frac{nH_1H_2u}{1-H_1 u} - \frac{n H_2 |\xi|}{1 - H_1 u} - \frac{|\xi|^2}{u}~.
\]
If $u$ attains a positive maximum at some point, then it is touched above by a constant function $m$ at that point. This contracts Lemma \ref{lem: subsolution} since $0 = \Delta m$ but $F(m,0) > 0$ if $m > 0$. To prove the general case, we will compare $u$ with a function of the form $\phi = m + \epsilon r^2$, where $r$ is the intrinsic distance to some point $p_0$ in $\Sigma_2$.

Suppose for the sake of contradiction that $\sup_{\Sigma_2} u = m > 0$. Following the argument of the Omori-Yau maximum principle (see \cite[Theorem 3]{Cheng:1975aa}), since $u$ is bounded above, for any $\epsilon$ we can find a point $p_0$ such that $u(p_0) > m - \epsilon$. Since $\Sigma_2$ has constant mean curvature, it is complete by Theorem \ref{thm: Cheng-Yau}, so the ball $B$ of radius one about $p_0$ is properly contained in $\Sigma_2$. Let $r$ be the intrinsic distance to $p_0$, and consider the function $u - \epsilon r^2$ on $B$. By construction, its value at $p_0$ is bigger than its supremum over the boundary of $B$, so it attains a maximum at some point $p_1$ in $B$. Let $\phi = u(p_1) -  \epsilon r(p_1)^2 + \epsilon r^2$, so that $\phi$ touches $u$ from above at $p_1$.


Since $\Sigma_2$ has constant mean curvature, the theorem of Cheng and Yau tells us that $\phi$ is smooth: $\Sigma_2$  has non-positive sectional curvature (Proposition \ref{prop: nonpos sec curv}), so it has no conjugate points, so $r^2$ and $\phi$ are smooth functions. It remains to establish that for $\epsilon$ small enough, $\phi$ is a strict supersolution of (\ref{eqn: laplacian of d}).

Recall that by Proposition \ref{prop: Ricci lower bound}, the Ricci curvature of $\Sigma_2$ is bounded below by $-n^2H_2^2/4$ times the metric. Hence by the gradient comparison theorem (\cite[Lemma 1]{MR431040}), there is a constant $C(n,H_2)$ such that on the ball $B$ we have $\Delta r \leq C(n,H_2)/r$ and so $\Delta \phi \leq 2 \epsilon C(n,H_2)$. We also have on $B$ that $|\nabla \phi| = 2 r \epsilon \leq 2 \epsilon$. 

Now let $\delta =  \frac{m}{2} \leq \frac{1}{2H_1}$, and choose $\epsilon$ small enough that
\begin{enumerate}
\myitem{$i)$}\label{omyau item 1} $\epsilon < \delta$, which implies $u(p_1) \geq u(p_0) > \delta$;
\myitem{$ii)$}\label{omyau item 2} $\epsilon < \mathrm{min}\left(\frac{H_1\delta}{6}, \sqrt{\frac{nH_1H_2\delta^2}{12(1-H_1\delta)}}\right)$, which together with \ref{omyau item 1} and the fact that $|\nabla \phi| \leq 2 \epsilon$ implies \[F(\phi(p_1), \nabla \phi(p_1)) > \frac{nH_1H_2\delta}{3(1-H_1\delta)};\]
\myitem{$iii )$}\label{omyau item 3} $\epsilon < \frac{1}{2C(n,H_2)} \cdot \frac{nH_1H_2\delta}{3(1-H_1\delta)}$, which together with \ref{omyau item 1} and \ref{omyau item 2} and the fact that $\Delta \phi \leq 2\epsilon C(n,H_2)$ implies \[\Delta \phi(p_1) < F(\phi(p_1), \nabla \phi(p_1)).\]
\end{enumerate}

Since $u(p_1) > 0$, it is a viscosity subsolution to the equation $\Delta u = F(u, \nabla u)$ at $p_1$ by Lemma \ref{lem: subsolution}, but since $\phi$ touches $u$ from above at $p_1$, this gives a contradiction. Hence $u$ cannot be positive, and $\Sigma_2 \subset J^+(\Sigma_1)$.
\end{proof}

We have completed the proof of the Comparison Theorem \ref{thm: comparison}, and hence also of the corollary that any two hypersurfaces of the same constant mean curvature sharing the same domain of dependence must coincide. It also follows that two hypersurfaces with different constant mean curvatures sharing the same domain of dependence are time-ordered by the inverse of their mean curvatures. These are the only two consequences of the comparison theorem that we will need in the remainder of this paper, and hence Theorem \ref{thm: comparison} is sufficient for our purposes. However, a stronger statement of the comparison theorem is possible, and in the remainder of this section we will sketch this argument.

Recall that in Theorem \ref{thm: comparison}, we assumed that $\Sigma_2$ had constant mean curvature $H_2 > 0$, but only that $\Sigma_1$ had mean curvature bounded below by $H_1 \geq H_2$. In the proof, we considered the distance to $\Sigma_1$ as a function on $\Sigma_2$. If instead, we consider the distance to $\Sigma_2$ as a function on $\Sigma_1$, we can prove the following:

\begin{theorem} \label{thm: other comparison}
Let $\Sigma_1$ and $\Sigma_2$ be entire spacelike hypersurfaces in $\R^{n,1}$. Suppose that $\Sigma_1$ has constant mean curvature $H_1 > 0$, $\Sigma_2$ has mean curvature bounded above by $H_2$, and $H_1 \geq H_2$. Suppose also that $\Sigma_2 \subset \mathcal{D}(\Sigma_1)$. Then $\Sigma_1 \subset J^-(\Sigma_2)$.
\end{theorem}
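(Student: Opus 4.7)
The plan is the sandwiching strategy anticipated in the introduction. Applying the existence part of Theorem~\ref{thm: existence} (proved later, in Section~\ref{sec: existence}) to the regular domain $\mathcal{D}(\Sigma_1)$---which is a regular domain by Corollary~\ref{cor: dom of dep regular domain}---yields a unique entire hypersurface $\Sigma_*$ of constant mean curvature $H_2$ with $\mathcal{D}(\Sigma_*)=\mathcal{D}(\Sigma_1)$. (If $H_2\le 0$, one first reduces to the case $H_2>0$ by replacing $H_2$ with any value in $(0,H_1]$, under which the one-sided bound on the mean curvature of $\Sigma_2$ is still satisfied.) I would then prove separately that $\Sigma_*\subset J^+(\Sigma_1)$ and $\Sigma_2\subset J^+(\Sigma_*)$. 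The first is a direct application of the already-proven Theorem~\ref{thm: comparison} to the pair $(\Sigma_1,\Sigma_*)$, since $\Sigma_1$ has mean curvature bounded below by $H_1\ge H_2$, $\Sigma_*$ has constant mean curvature $H_2$, and $\Sigma_*\subset\mathcal{D}(\Sigma_*)=\mathcal{D}(\Sigma_1)$. Transitivity of $J^+$ then yields $\Sigma_2\subset J^+(\Sigma_1)$; writing each $\Sigma_i$ as the graph of a function $f_i:\R^n\to\R$, this is equivalent to $f_1\le f_2$ pointwise (using the strict bound $|Df_i|<1$), which in turn is equivalent to the desired conclusion $\Sigma_1\subset J^-(\Sigma_2)$.

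For the second containment $\Sigma_2\subset J^+(\Sigma_*)$, I would rerun the argument of Section~\ref{sec: uniqueness} with $\Sigma_*$ in the role of ``$\Sigma_1$'' and $\Sigma_2$ in the role of ``$\Sigma_2$'', taking $u(p)=d(p,\Sigma_*)$ on $\Sigma_2\cap I^-(\Sigma_*)$; Lemma~\ref{lem: minattained} applies because $\Sigma_2\subset\mathcal{D}(\Sigma_1)=\mathcal{D}(\Sigma_*)$. The key observation is that in the derivation of the subsolution inequality of Lemma~\ref{lem: subsolution}, constant mean curvature of the upper hypersurface is used only to write
\[
\sum_a\frac{\partial^2 z}{\partial p^a\partial p^a}=2nH_2(p_0)\,u(p_0)\,|\langle\nu_*,\nu_2\rangle|+2n,
\]
and the upper bound $H_2(p_0)\le H_2$ combined with $\langle\nu_2,q_0-p_0\rangle<0$ preserves the one-sided inequality used immediately after. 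Hence $u$ remains a viscosity subsolution of the same differential inequality, with both rate constants set to $H_2$.

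The main obstacle is the Omori--Yau step. The proof of Theorem~\ref{thm: comparison} needed completeness of $\Sigma_2$ together with a uniform Ricci lower bound (furnished by constant mean curvature via Theorem~\ref{thm: Cheng-Yau} and Proposition~\ref{prop: Ricci lower bound}), and under the one-sided mean-curvature bound available here neither is directly available on $\Sigma_2$. I would get around this by transferring the barrier construction onto $\Sigma_*$, which is complete with uniform Ricci lower bound $-n^2H_2^2/4$: the Lorentzian nearest-point projection $p\mapsto q_0(p)\in\Sigma_*$ is used to pull back intrinsic paraboloid barriers from $\Sigma_*$ to upper barriers for $u$ on $\Sigma_2$ at near-supremum points detected by Omori--Yau on $\Sigma_*$, and the pointwise subsolution inequality above then delivers the same $F(m,0)>0$ contradiction as in the original proof.
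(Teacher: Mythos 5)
Your Step~1 is fine: applying Theorem~\ref{thm: comparison} to the pair $(\Sigma_1,\Sigma_*)$ does give $\Sigma_*\subset J^+(\Sigma_1)$. But Step~2, the containment $\Sigma_2\subset J^+(\Sigma_*)$, has a genuine gap. You are asking for a comparison between a CMC hypersurface $\Sigma_*$ and a hypersurface $\Sigma_2$ whose mean curvature is merely bounded above, which is exactly the hypothesis pattern of the theorem you are trying to prove (with $H_1=H_2$); so the sandwiching has not reduced the problem, only shifted it. You correctly observe that the pointwise subsolution computation in Lemma~\ref{lem: subsolution} survives (the sign of $\langle\nu_2,q_0-p_0\rangle$ lets you replace the mean curvature of $\Sigma_2$ at $p_0$ by its upper bound $H_2$), but the Omori--Yau step is then stuck, as you notice. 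Your proposed fix --- pulling back paraboloid barriers from $\Sigma_*$ to $\Sigma_2$ along the Lorentzian nearest-point projection --- is not worked out and I do not believe it goes through as sketched: the projection $p\mapsto q_0(p)$ is defined only where $u>0$, is a priori neither surjective nor Lipschitz with controlled constant, and the pulled-back function fails to be a priori $C^2$ or to satisfy the gradient estimate $|\nabla\phi|\le 2\epsilon$ that the Omori--Yau argument relies on.

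The idea you are missing is the one the paper actually uses: swap which hypersurface the distance function lives on. Instead of $u(p)=d(p,\Sigma_*)$ for $p\in\Sigma_2$, consider $u(q)=d(\Sigma_2,q)$ for $q\in\Sigma_1\cap I^+(\Sigma_2)$. Now the differential inequality
\[
\Delta u \;\geq\; \frac{nH_1H_2 u}{H_2 u + 1} - \frac{|\nabla u|^2}{u}
\]
is derived on $\Sigma_1$, which has constant mean curvature, hence is complete with the uniform Ricci lower bound $-n^2H_1^2/4$ by Theorem~\ref{thm: Cheng-Yau} and Proposition~\ref{prop: Ricci lower bound}. The Omori--Yau argument then runs verbatim on $\Sigma_1$; the bound $u\le 1/H_1$ still holds via Lemma~\ref{lem: C0 bound} because the maximizing point $p_0\in\Sigma_2\subset\mathcal{D}(\Sigma_1)$, and in the subsolution computation one uses the even cruder bound $|\langle\nu_1,\nu_2\rangle|\ge 1$ instead of $\le 1+|\nabla u|$. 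With this role-reversal the theorem is proved directly for the original pair $(\Sigma_1,\Sigma_2)$, and your auxiliary CMC hypersurface $\Sigma_*$ and the existence result are not needed at all. (The sandwiching strategy you outline is the right one for Theorem~\ref{thm: general comparison}, where both mean curvatures are only bounded one-sidedly; but that proof \emph{uses} Theorem~\ref{thm: other comparison}, so the present theorem must be proved independently.)
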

We remark that we do not need to assume $H_2 \geq 0$.

\begin{proof}

Suppose $\Sigma_1 \cap I^+(\Sigma_2)$ were nonempty. For $q \in \Sigma_1 \cap I^+(\Sigma_2)$, let 
\[
u(q) = d(\Sigma_2, q).
\]
The analog of Lemma \ref{lem: subsolution} in this case is that in the viscosity sense,
\begin{equation} \label{eqn: laplacian of d 2}
\begin{split}
\Delta u \geq \frac{nH_1H_2 u}{H_2 u + 1} - \frac{|\nabla u|^2}{u}
\end{split}
\end{equation}
where $\Delta$ is the Laplacian on $\Sigma_1$. The proof parallels the proof of Lemma \ref{lem: subsolution}, except that instead of the bound $|\langle \nu_1, \nu_2 \rangle | \leq 1 + |\nabla u|$ in Equation \ref{eqn: gradient term}, we use the even simpler bound $|\langle \nu_1, \nu_2 \rangle | \geq 1$. We now spell this out in a little more detail. For any point $q_0$ in $\Sigma_1 \cap I^+(\Sigma_2)$, we can find a point $p_0 \in \Sigma_2$ maximizing the distance: indeed, since $\Sigma_2 \subset \mathcal{D}(\Sigma_1)$ which is a regular domain by Corollary \ref{cor: dom of dep regular domain}, the hypersurface $\Sigma_2$ can have no future horizon, so $I^+(\Sigma_2) \subset \mathcal{D}(\Sigma_2)$; since $q_0 \in I^+(\Sigma_2) \subset \mathcal{D}(\Sigma_2)$ such a point $p_0$ exists by Lemma \ref{lem: minattained}. 

To establish the bound (\ref{eqn: laplacian of d 2}), define $p(q): \Sigma_1 \to \Sigma_2$ so that $p(q_0) = p_0$ and $\partial p / \partial q$ is the inverse of the boost in Figure \ref{fig: boost} followed by dilation by $\mu$, and define $\underline{u}(q) = d(p(q), q)$ and $z = - \underline{u}^2 = \langle q - p, q - p \rangle$. Then just as in Lemma \ref{lem: subsolution} we find
\begin{equation}
\begin{split}
\Delta z \leq 2n(- H_1 u + 1) - 4n\mu + 2n(H_2 u + 1)\mu^2
\end{split}
\end{equation}
where now we consider $z$ as a function on $\Sigma_1$. Taking $\mu = \frac{1}{H_2 u + 1}$, using $H_1 \geq H_2$, and rewriting in terms of $u$, gives the Laplacian bound (\ref{eqn: laplacian of d 2}). 

The remainder of the proof proceeds exactly as for Theorem \ref{thm: comparison}, using the completeness of $\Sigma_1$ that follows from its constant mean curvature by the theorem of Cheng and Yau. Note that we still have the upper bound $u \leq \frac{1}{H_1}$ by Lemma \ref{lem: C0 bound} because $p_0 \in \mathcal{D}(\Sigma_1)$.
\end{proof} 

For the most general version of the comparison theorem, we combine Theorems \ref{thm: comparison} and \ref{thm: other comparison} with the existence part of Theorem \ref{thm: existence} proved in Section \ref{sec: existence}. We stress that we do not need the following theorem in the proof of existence.

\begin{theorem} \label{thm: general comparison}
 Let $\Sigma_1$ and $\Sigma_2$ be entire spacelike hypersurfaces in $\R^{n,1}$. Suppose that $\Sigma_1$ has mean curvature bounded below by $H_1 > 0$, $\Sigma_2$ has mean curvature bounded above by $H_2$, and $H_1 \geq H_2$. Suppose also that $\Sigma_2 \subset \mathcal{D}(\Sigma_1)$. Then $\Sigma_1 \subset J^-(\Sigma_2)$.
\end{theorem}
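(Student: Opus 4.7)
The strategy, foreshadowed in the excerpt, is to interpose an entire constant mean curvature hypersurface $\Sigma$ between $\Sigma_1$ and $\Sigma_2$ and apply the two previous comparison theorems in sequence. Since $\Sigma_1$ has mean curvature bounded below by $H_1>0$, Corollary \ref{cor: dom of dep regular domain} ensures that $\mathcal{D}(\Sigma_1)$ is a regular domain, so the existence part of Theorem \ref{thm: existence} (proven in Section \ref{sec: existence}) produces an entire hypersurface $\Sigma$ of constant mean curvature $H_1$ with $\mathcal{D}(\Sigma)=\mathcal{D}(\Sigma_1)$.

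Next, apply Theorem \ref{thm: comparison} with the original $\Sigma_1$ and with $\Sigma$ playing the role of the second hypersurface: both mean curvature bounds coincide with the common value $H_1$, and $\Sigma\subset\mathcal{D}(\Sigma)=\mathcal{D}(\Sigma_1)$, so the conclusion is $\Sigma\subset J^+(\Sigma_1)$. Then apply Theorem \ref{thm: other comparison} with $\Sigma$ playing the role of the first hypersurface and $\Sigma_2$ as given: $\Sigma$ has constant mean curvature $H_1\geq H_2$, $\Sigma_2$ has mean curvature bounded above by $H_2$, and the assumption $\Sigma_2\subset\mathcal{D}(\Sigma_1)=\mathcal{D}(\Sigma)$ is available, yielding $\Sigma\subset J^-(\Sigma_2)$.

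To conclude, I use that each of $\Sigma_1$, $\Sigma$, $\Sigma_2$ is entire and spacelike, hence the graph of a $1$-Lipschitz function $u_{\Sigma_i}:\R^n\to\R$. For any two such graphs $A$ and $B$, the relation $B\subset J^+(A)$ is equivalent to the pointwise inequality $u_B\geq u_A$: if $u_B\geq u_A$ then the vertical displacement from $(x,u_A(x))$ to $(x,u_B(x))$ is future causal; conversely, if $B\subset J^+(A)$ then for each $x$ there is $y$ with $u_B(x)-u_A(y)\geq|x-y|$, and the $1$-Lipschitz property of $u_A$ forces $u_B(x)\geq u_A(x)$. Applying this equivalence to both inclusions from the previous paragraph gives $u_{\Sigma_1}\leq u_\Sigma\leq u_{\Sigma_2}$, from which $\Sigma_1\subset J^-(\Sigma_2)$ follows immediately.

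There is no real obstacle at this stage: all of the analytic work has already been carried out, either in Theorems \ref{thm: comparison} and \ref{thm: other comparison} (both of which used the Omori--Yau maximum principle on the Lorentzian distance between two CMC-like hypersurfaces) or in the forthcoming existence proof of Section \ref{sec: existence}. The role of the intermediate hypersurface $\Sigma$ is precisely to decouple the one-sided nature of each of the two comparison theorems: Theorem \ref{thm: comparison} requires the \emph{upper} one to be exactly CMC, while Theorem \ref{thm: other comparison} requires the \emph{lower} one to be exactly CMC, and $\Sigma$ simultaneously plays both roles.
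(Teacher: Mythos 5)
Your proof is correct and takes essentially the same approach as the paper: produce the CMC leaf $\Sigma$ of the foliation of $\mathcal D(\Sigma_1)$ (the paper allows any $H\in[H_2,H_1]$, you pick $H=H_1$) and sandwich it between $\Sigma_1$ and $\Sigma_2$ via Theorems \ref{thm: comparison} and \ref{thm: other comparison}. The only difference is that you spell out the final "transitivity" step for entire graphs via the 1-Lipschitz characterization, which the paper leaves implicit; that verification is correct.
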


\begin{proof}
By Corollary \ref{cor: dom of dep regular domain}, the domain of dependence $\mathcal{D}(\Sigma_1)$ is a regular domain. Using Theorem \ref{thm: existence}, let $\Sigma$ be the unique hypersurface of constant mean curvature $H$ with domain of dependence $\mathcal{D}(\Sigma_1)$ for any fixed value of $H$ with $H_1 \geq H \geq H_2$. By Theorem \ref{thm: comparison}, since $\Sigma \subset \mathcal{D}(\Sigma_1)$, the hypersurface $\Sigma$ lies weakly in the future of $\Sigma_1$. By Theorem \ref{thm: other comparison}, since $\Sigma_2 \subset \mathcal{D}(\Sigma_1) = \mathcal{D}(\Sigma)$, the hypersurface $\Sigma$ lies weakly in the past of $\Sigma_2$. Therefore, $\Sigma_1$ lies weakly in the past of $\Sigma_2$.
\end{proof}

\section{Existence} \label{sec: existence}
In this section we prove the following theorem about the existence of a CMC hypersurface in any regular domain. Our proof relies on a combination of the techniques of \cite{choitreibergs} and \cite{MR2904912}.


\begin{theorem*}[Existence part of Theorem \ref{thm: existence}]
For any regular domain $\mathcal{D}$ and any $H>0$ there exists an entire spacelike hypersurface $\Sigma$ with  constant mean curvature $H$ and domain of dependence $\mathcal D$. 
\end{theorem*}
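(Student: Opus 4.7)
\emph{Plan.} My approach will be the classical sub- and super-solution barrier method for the quasilinear elliptic PDE describing spacelike CMC graphs. Writing an entire spacelike hypersurface as the graph of a $1$-Lipschitz function $u:\R^n\to\R$, the condition of constant mean curvature $H$ with respect to the future unit normal becomes
\[
\mathrm{div}\!\left(\frac{Du}{\sqrt{1-|Du|^2}}\right)=nH.
\]
By the fundamental sub/supersolution proposition of Choi--Treibergs (\cite[Proposition 6.1]{choitreibergs}, reviewed in Section \ref{ss:ctargument}), it suffices to produce $v,w\in C^{0,1}(\R^n)$ with $v\leq w$, such that $v$ is a weak subsolution and $w$ a weak supersolution; the proposition then delivers a smooth solution $u$ with $v\leq u\leq w$ whose graph will be the desired CMC hypersurface.

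For the lower barrier I take $v$ to be the convex $1$-Lipschitz function whose graph is the past horizon $\partial\mathcal{D}=T^{-1}(0)$. Because $\mathcal{D}$ is a regular domain, the past horizon is ruled by future null rays, so at every differentiability point $|Dv|=1$. Hence the quasilinear operator on the left of the PDE is $+\infty$ in the viscosity sense, and $v$ is automatically a weak subsolution for every $H>0$. This is the shortcut alluded to in the introduction (``to save some effort''): using $T^{-1}(0)$ rather than a thin level set $T^{-1}(\epsilon)$ avoids the need for any quantitative mean-curvature lower bound.

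For the upper barrier I take $w$ to be the function whose graph is the cosmological time level set $T^{-1}(1/H)$. By Proposition \ref{prop:T level sets} this is a $C^1$ entire convex spacelike hypersurface with $\mathcal{D}(T^{-1}(1/H))=\mathcal{D}$. The key estimate is that the mean curvature of $T^{-1}(r)$ is bounded above pointwise by $1/r$, so that at $r=1/H$ the bound is exactly $H$ and $w$ is a weak supersolution. This estimate is sharp in the umbilical case (the future cone of a point, where equality holds) and consistent with the wedge case (where mean curvature is $0$ on the flat pieces and $1/r$ on the trough-like piece); in general it follows from the convexity of $T$ along timelike geodesics together with a monotonicity/comparison argument analogous to the one used by Barbot in \cite{MR2216148} in the cocompact setting. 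The ordering $v\leq w$ is automatic since $w$ lies in the future of the horizon.

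Applying the Choi--Treibergs proposition to the pair $(v,w)$ yields a smooth $u$ with $v\leq u\leq w$ whose graph $\Sigma$ is an entire spacelike CMC-$H$ hypersurface in $\mathcal{D}$; since both barriers have domain of dependence equal to $\mathcal{D}$ and $\Sigma$ is sandwiched between them, one concludes $\mathcal{D}(\Sigma)=\mathcal{D}$. The main obstacle I anticipate is the pointwise mean-curvature upper bound on $T^{-1}(1/H)$ in the absence of any compactness: one cannot invoke a compact-level-set argument as in \cite{MR2216148}, and must instead exploit only the a priori convexity of $T$. Should that estimate prove stubborn, a fallback is to approximate $\mathcal{D}$ from inside by a nested family of simpler regular domains (e.g.\ finite intersections of future half-spaces, to which the Treibergs--Choi--Treibergs machinery applies more directly), solve the problem on each, and pass to the limit using the Cheng--Yau completeness bound together with the comparison principle of Section \ref{sec: uniqueness}.
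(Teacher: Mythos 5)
Your barrier choices ($T^{-1}(0)$ as subsolution, $T^{-1}(1/H)$ as supersolution) and the reduction to Choi--Treibergs's Proposition~\ref{pr:extec} coincide with the paper's strategy, but two steps in your verification do not go through as written. The first concerns the supersolution. You flag the pointwise mean-curvature bound on $T^{-1}(r)$ as the main difficulty, but that bound is the wrong tool, since Proposition~\ref{prop:T level sets} only guarantees that $T^{-1}(r)$ is $C^1$, so it has no classical mean curvature to estimate. The paper instead verifies the viscosity supersolution condition directly, after normalizing $H=1$ and writing $S_1 := T^{-1}(1)$: for each $p\in S_1$, if $r\in\partial\mathcal D$ is the foot point realizing $T(p)=1$, then the hyperboloid $r+\Hyp^n$ is an exact CMC-$1$ solution passing through $p$ and lying weakly in the future of $S_1$, because every point of this hyperboloid has cosmological time at least $1$. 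This touching-hyperboloid argument sidesteps any curvature computation on the barrier and is the step you should adopt.

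The second and more serious gap is the omission of the \emph{properness} hypothesis in Proposition~\ref{pr:extec}, which requires $\underline v\leq\overline v$ to be convex \emph{and proper} functions in $C^{0,1}(\R^n)$. Properness of $v_0, v_1$ fails whenever the unit normals to the spacelike support planes of $\partial\mathcal D$ are not an open subset of $\Hyp^n$; the paradigm case is a wedge, where $T^{-1}(r)$ is a trough $\R^{n-1}\times(\text{hyperbola})$ and the barrier function is constant in $n-1$ directions, hence non-proper. The paper resolves this by a case split: when those normals contain an open ball around some timelike unit vector $e$, one derives a uniform linear lower bound $v_0(\mathbf x)\geq(\tanh\epsilon)\|\mathbf x\|-c$ giving properness; otherwise all support directions lie in a timelike hyperplane, $\mathcal D$ splits isometrically as $\R\times\mathcal D^0$ with $\mathcal D^0$ a regular domain in $\R^{n-1,1}$, and one concludes by induction on dimension. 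Without this, your argument applies Proposition~\ref{pr:extec} outside its hypotheses already in the wedge case. Your fallback (exhausting $\mathcal D$ by simpler regular domains and passing to the limit) is a genuinely different route, but it introduces its own difficulties --- preventing the limiting hypersurface from degenerating or escaping to $\partial\mathcal D$, and identifying its domain of dependence as exactly $\mathcal D$ --- which the direct barrier argument avoids.
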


This result is a little generalization of  Theorem 6.2 of \cite{choitreibergs}, and the strategy is essentially the same as in that paper. 
The new ingredient with respect to \cite{choitreibergs} is the observation of \cite{MR2904912} that level sets of cosmological time can be used as good barriers in any regular domain. The proof is then a direct application of Proposition 6.1 of \cite{choitreibergs}.
For completeness in Subsection \ref{ss:ctargument} we will give a short  overview of the argument of Choi and Treibergs.

\subsection{Barrier argument}

Let $\Sigma_u=\{(\mathbf x,u(\mathbf x))\in\R^{n,1}\,|\,\mathbf x\in\Omega\}$ be a spacelike hypersurface, for $u:\Omega\to\R$ a differentiable function on a domain $\Omega$ with $|Du|<1$. We call such $u$ a spacelike function.
If $u$ is smooth, the mean curvature of 
$\Sigma_u$ at a point $(\mathbf{x}, u(\mathbf{x}))$ is given then by the expression
\[
   \frac{1}{n}\left(\frac{1}{\sqrt{1-|Du|^2}}\sum_{i}D_{ii}u+\frac{1}{(1-|Du|^2)^{3/2}}\sum_{ij} D_iuD_ju D_{ij}u\right)(\mathbf x)\,.
\]

So spacelike graphs of constant mean curvature $H$ bijectively correspond to solutions of the problem
\begin{equation}\tag{CMC}\label{eq:cmcproblem}
\begin{cases}
L_{H}(u)=(1-|Du|^2)\sum_{i}D_{ii}u+\sum_{ij} D_iuD_ju D_{ij}u-nH(1-|Du|^2)^{3/2}=0\,,\\
|Du|<1\,.
\end{cases}
\end{equation}

{The operator $L$ is quasi-linear and elliptic on the domain of spacelike functions. In the following we will normalize $H=1$ and we will simply denote $L_1$ by $L$.}




\begin{lemma}\label{lm:barr}
Let $\mathcal{D}$ be a regular domain and $T:\mathcal{D}\to(0,+\infty)$ the cosmological time. Let $v_0, v_1:\mathbb R^n\to\mathbb R$ be the functions whose graphs coincide with
$\partial \mathcal{D}$ and $S_1=T^{-1}(1)$. Then $v_0$ and $v_1$ are respectively a  sub- and super-solution of \eqref{eq:cmcproblem} in the viscosity sense.
\end{lemma}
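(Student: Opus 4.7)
The plan is to treat the two halves of the lemma separately: the subsolution statement for $v_0$ follows vacuously from the geometry of the past horizon, while the supersolution statement for $v_1$ follows by comparison with an upper hyperboloid.

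For $v_0$, I would argue that no smooth spacelike $\phi$ can touch $v_0$ from above at any point, so the subsolution condition is satisfied vacuously. Since $v_0$ is the graph function of the past horizon $\partial \mathcal{D}$, by Proposition \ref{prop: entire past horizons} through every point $(\mathbf{x}_0, v_0(\mathbf{x}_0))$ there passes an entire future-directed null ray contained in $\partial \mathcal{D}$; in graph coordinates this gives $\xi \in \R^n$ with $|\xi|=1$ and $v_0(\mathbf{x}_0 + t \xi) = v_0(\mathbf{x}_0) + t$ for all $t \geq 0$. If $\phi$ were smooth with $|D\phi(\mathbf{x}_0)| < 1$, $\phi \geq v_0$, and $\phi(\mathbf{x}_0) = v_0(\mathbf{x}_0)$, then dividing the inequality $\phi(\mathbf{x}_0 + t\xi) - \phi(\mathbf{x}_0) \geq t$ by $t > 0$ and letting $t \to 0^+$ would give $D\phi(\mathbf{x}_0) \cdot \xi \geq 1$, contradicting $|D\phi(\mathbf{x}_0)| < 1$.

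For $v_1$, I would construct at each base point $\mathbf{x}_0$ a smooth CMC-$1$ comparison hypersurface lying weakly above $T^{-1}(1)$. Set $p_1 = (\mathbf{x}_0, v_1(\mathbf{x}_0))$, so $T(p_1)=1$. Standard results on cosmological time in regular domains (see \cite{Bonsante}, as implicit in the proof of Proposition \ref{prop:T level sets}) give that the supremum defining $T(p_1)$ is attained at some $q_0 \in \partial \mathcal{D}$, so $d(q_0,p_1)=1$. Let $H$ be the upper sheet of the unit hyperboloid centred at $q_0$, i.e.\ $H = \{p \in I^+(q_0) : d(q_0,p)=1\}$. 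Then $H$ is an entire smooth spacelike hypersurface of constant mean curvature $1$ contained in $\mathcal{D} = I^+(\partial \mathcal{D})$; moreover every $p \in H$ satisfies $T(p) \geq d(q_0,p) = 1$, so $H$ lies weakly in the future of $T^{-1}(1)$. Writing $H$ as the graph of $u_{\mathrm{hyp}}:\R^n \to \R$, this translates to $u_{\mathrm{hyp}} \geq v_1$ with equality at $\mathbf{x}_0$, and $L(u_{\mathrm{hyp}}) \equiv 0$ since $H$ has constant mean curvature $1$.

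Now if $\phi$ is smooth and spacelike, touching $v_1$ from below at $\mathbf{x}_0$, then $\phi \leq v_1 \leq u_{\mathrm{hyp}}$ with equality at $\mathbf{x}_0$, so $u_{\mathrm{hyp}}$ touches $\phi$ from above at $\mathbf{x}_0$. Both functions being smooth there, we get $D\phi(\mathbf{x}_0) = Du_{\mathrm{hyp}}(\mathbf{x}_0)$ and $D^2\phi(\mathbf{x}_0) \leq D^2 u_{\mathrm{hyp}}(\mathbf{x}_0)$. Since the operator $L$ is elliptic on spacelike graphs, its coefficient of $D_{ij}u$ being the positive-definite matrix $(1-|Du|^2)\delta_{ij} + D_i u D_j u$ when $|Du|<1$, we conclude $L(\phi)(\mathbf{x}_0) \leq L(u_{\mathrm{hyp}})(\mathbf{x}_0) = 0$, which is the viscosity supersolution condition. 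The one non-routine ingredient in the plan is the attainment of the maximiser $q_0$, a standard compactness fact for cosmological time in a regular domain.
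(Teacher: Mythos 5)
Your proof is correct and takes essentially the same approach as the paper: for $v_0$ you use the null-ray property of the past horizon to show no spacelike test function can touch from above, and for $v_1$ you construct the unit hyperboloid centered at the point on $\partial\mathcal{D}$ realizing the cosmological time, which lies weakly above $S_1$ and has CMC $1$. The only difference is that you spell out the final ellipticity step ($D^2\phi \leq D^2 u_{\mathrm{hyp}}$ implies $L(\phi) \leq L(u_{\mathrm{hyp}})$) explicitly, which the paper leaves implicit.
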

 \begin{proof}
 
 Since $\partial\mathcal{D}$ is a past horizon, for any $\mathbf{x}\in\mathbb R^n$ there is $\mathbf{\xi}\in\mathbb R^n$ with $|\mathbf{\xi}|=1$ such that $v_0(\mathbf{x}+t\mathbf{\xi})=v_0(\mathbf{x})+t$ for small $t$ (in fact, since it is entire, this holds for all $t \geq 0$ by Proposition \ref{prop: entire past horizons}). 
  This implies that if $v$ is any spacelike function, then the function $v-v_0$ cannot have local interior minimum. In fact we have that $(v-v_0)(\mathbf{x}+t\mathbf{\xi})<(v-v_0)(\mathbf{x})$.
 Thus no spacelike function $v$ can touch $v_0$ from above.
  
  In order to prove that $v_1$ is a supersolution in the viscosity sense  we will show that for any $\mathbf{x}\in\mathbb R^n$ there exists a function $v^{\mathbf{x}}$, solution of $L$, touching from above $v_1$ at $\mathbf{x}$.
  In fact   we claim that for any point $p=(\mathbf x,u_1(\mathbf{x}))\in S_1$ there is a hyperboloid $\mathcal H(\mathbf{x})$ passing through $p$ and lying  above $S_1$.
  The function which defines $\mathcal H(\mathbf x)$, say $v_1^{\mathbf  x}$, is a solution of $L$ 
   which satisfies $v_1^{\mathbf x}\geq v_1$ and $v_1^{\mathbf x}(\mathbf{x})=v_1(\mathbf{x})$.
  
 In order to prove the claim let $r$ be the point on $\partial\mathcal D$ such that $\langle p-r, p-r\rangle=T(p)=1$, 
 and consider the hyperboloid $\mathcal H(\mathbf{x})=r+\mathbb H^n$. Clearly $p\in \mathcal H(\mathbf{x})\subset\mathcal D$.
 Moreover for any $p'\in \mathcal  H(\mathbf{x})$ we have that $\langle p'-r, p'-r\rangle=1$ so the cosmological time of points in $\mathcal H(\mathbf{x})$ cannot be less than $1$, that is $\mathcal H(\mathbf{x})\subset {J^+(S_1)}$.
 \end{proof}

As the coefficients of the quasi-linear elliptic operator $L$ depend only on the derivatives of $u$, the Comparison Principle applies (see \cite[Theorem 10.1]{GT}): if $u,v$ are twice differentiable spacelike functions on a compact domain $\Omega\subset\mathbb R^n$,
 such that $L(u)<L(v)$ on $\Omega$ and $u-v$ is nonnegative on $\partial\Omega$,  then $u-v$ is nonnegative on the whole domain $\Omega$. We remark here that a comparison principle holds if $v$ is only a viscosity subsolution:
\begin{lemma} \label{lemma comparison viscosity}
Let $v:\Omega\to\mathbb R$ be a viscosity  subsolution of \eqref{eq:cmcproblem}. For any solution $u$ defined on $\Omega'\Subset \Omega$, if $v\leq u$ on $\partial\Omega'$ then $v\leq u$ on $\Omega'$.
\end{lemma}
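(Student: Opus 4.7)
The plan is to argue by contradiction, using a carefully constructed classical strict supersolution to convert the viscosity subsolution property into a genuine contradiction via the maximum principle. Suppose $M := \sup_{\overline{\Omega'}}(v - u) > 0$. Since $v$ is upper semicontinuous (being a viscosity subsolution) and $u$ is continuous, $v - u$ is USC on the compact set $\overline{\Omega'}$ and attains $M$ at some point $x^*$; the boundary hypothesis $v \leq u$ on $\partial\Omega'$ forces $x^* \in \Omega'$. Pick $r > 0$ small enough that $\overline{B_r(x^*)} \subset \Omega'$, and write $B = B_r(x^*)$.

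The first obstacle is that $L$ depends only on $Du$ and $D^2u$, not on $u$ itself, so the viscosity subsolution property compared against a constant translate $u + c$ only yields $L(u+c) = L(u) = 0 \geq 0$, no contradiction. To overcome this, I construct a classical strict supersolution close to $u$ by graph dilation: for $\lambda > 1$, define on $\overline B$
\[
u_\lambda(x) := \lambda\, u\!\left(x^* + \tfrac{1}{\lambda}(x - x^*)\right) + (1 - \lambda)\, u(x^*),
\]
which is well-defined because $B$ is star-shaped about $x^*$. The graph of $u_\lambda$ is the $\lambda$-dilation of the graph of $u$ centered at $(x^*, u(x^*))$, so it has constant mean curvature $1/\lambda < 1$; in particular
\[
L(u_\lambda) = n\bigl(\tfrac{1}{\lambda} - 1\bigr)\bigl(1 - |Du_\lambda|^2\bigr)^{3/2} < 0
\]
uniformly on $\overline B$. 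Moreover $u_\lambda \to u$ in $C^2(\overline B)$ with $\|u - u_\lambda\|_\infty = O(\lambda - 1)$.

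The second obstacle is that $\partial B$ lies in the interior of $\Omega'$, so we have no direct bound on $v$ there. I compensate with a quadratic correction: set $\phi_{\lambda,\epsilon}(x) := u_\lambda(x) + \epsilon |x - x^*|^2$. For $\epsilon > 0$ small, continuity of $L$ in $C^2$ preserves $L(\phi_{\lambda,\epsilon}) < 0$ on $\overline B$. Now balance the parameters so that $\|u - u_\lambda\|_\infty < \epsilon r^2/2$; this is possible because both $|L(u_\lambda)|$ and $\|u - u_\lambda\|_\infty$ are of order $\lambda - 1$, so that fixing $\lambda - 1$ small and then choosing $\epsilon$ of the same order satisfies both conditions. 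Then
\[
(v - \phi_{\lambda,\epsilon})(x^*) = M, \qquad (v - \phi_{\lambda,\epsilon})\big|_{\partial B} \leq M + \|u - u_\lambda\|_\infty - \epsilon r^2 < M - \tfrac{\epsilon r^2}{2},
\]
so the USC function $v - \phi_{\lambda,\epsilon}$ attains its positive maximum at some interior point $x_1 \in B$.

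The contradiction is immediate: the smooth function $\phi_{\lambda,\epsilon} + (v - \phi_{\lambda,\epsilon})(x_1)$ touches $v$ from above at $x_1$, so by the viscosity subsolution property $L$ of this function at $x_1$ is $\geq 0$. But $L$ is invariant under addition of constants, so $L(\phi_{\lambda,\epsilon})(x_1) \geq 0$, contradicting the strict supersolution inequality. Hence $M \leq 0$ and $v \leq u$ on $\Omega'$. The main delicate point is the simultaneous choice of $\lambda$ and $\epsilon$: the dilation produces the needed strict negativity of $L$, but only of size $O(\lambda - 1)$, while the quadratic correction must dominate $\|u - u_\lambda\|_\infty$ on $\partial B$, which is also $O(\lambda - 1)$; the linear Taylor expansion of $u_\lambda$ in the parameter $\lambda - 1$ confirms that an admissible window for $\epsilon$ exists.
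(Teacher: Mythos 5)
Your overall plan---perturb the classical solution $u$ to a strict supersolution so that the viscosity inequality produces a contradiction---is a natural reading of the paper's (admittedly terse) ``perturbation argument,'' and the dilation trick producing a CMC-$1/\lambda$ graph is correct. The gap is in the crucial claim that ``an admissible window for $\epsilon$ exists.'' Both of your requirements on $\epsilon$ are $O(\lambda - 1)$, but the constants do \emph{not} order correctly in general, and in fact they fail even in the simplest example.

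Write $\delta = \lambda - 1 > 0$ and $p_0 = Du(x^*)$. Your boundary requirement $\|u - u_\lambda\|_\infty < \epsilon r^2/2$ forces a lower bound on $\epsilon$: since $u_\lambda - u = -\delta\, g + O(\delta^2)$ with $g(x) = u(x) - u(x^*) - Du(x)\cdot(x - x^*)$, and $g(x) = -\tfrac{1}{2}(x-x^*)^{\!\top}D^2u(x^*)(x-x^*) + o(|x-x^*|^2)$, one has $\sup_{\partial B_r}(u - u_\lambda) \approx \tfrac{\delta}{2}\,r^2\,\lambda_{\max}(D^2u(x^*))$, so the requirement amounts (as $r\to 0$) to $\epsilon \gtrsim \delta\,\lambda_{\max}(D^2u(x^*))$. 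On the other hand, near $x^*$ the quadratic term adds $2\epsilon[n - (n-1)|p_0|^2]$ to $L$, while $L(u_\lambda)(x^*) = n(\tfrac{1}{\lambda}-1)(1-|p_0|^2)^{3/2} \approx -n\delta(1-|p_0|^2)^{3/2}$, so keeping $L(\phi_{\lambda,\epsilon}) < 0$ forces $\epsilon \lesssim \tfrac{n(1-|p_0|^2)^{3/2}}{2[n-(n-1)|p_0|^2]}\,\delta$. The window is nonempty only if $\lambda_{\max}(D^2u(x^*)) < \tfrac{n(1-|p_0|^2)^{3/2}}{n-(n-1)|p_0|^2}$, a genuine curvature restriction that is \emph{not} available. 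For the hyperbola in $n=1$, $u(x) = \sqrt{1+x^2}$ at $x^* = 0$, one computes directly: $u_\lambda(x) = \sqrt{\lambda^2+x^2}+1-\lambda$, and $L(\phi)(0) = \tfrac{1}{\lambda} + 2\epsilon - 1 < 0$ forces $\epsilon < \tfrac{\lambda-1}{2\lambda}$, whereas $u(r) - u_\lambda(r) \approx \tfrac{(\lambda-1)r^2}{2}$ forces $\epsilon > \tfrac{\lambda-1}{2}$ already without your factor $1/2$ safety margin. The window is empty.

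There is in fact a structural obstruction to the whole strategy of producing a \emph{global} smooth strict supersolution $\phi$ on the ball with $\phi \geq v$ on $\partial B$ and a touching point inside: by the classical comparison principle for $C^2$ super/sub-solutions of this quasilinear equation, a strict supersolution dominating the solution $u + M$ on $\partial B$ dominates it on all of $B$, and since $v \leq u + M$, it dominates $v$ strictly --- so no interior touching point can occur. Thus the perturbation of $u$ must be localized or one-sided in a way that your construction does not achieve. A robust route is to avoid perturbing $u$ at all and instead invoke the full viscosity comparison machinery (sup-convolution and the theorem on sums / Jensen's lemma), which applies here because $a^{ij}(p) = (1-|p|^2)\delta^{ij} + p^ip^j$ is uniformly elliptic on $\{|p| \leq c < 1\}$ and the zeroth-order term $n(1-|p|^2)^{3/2}$ is locally Lipschitz in $p$. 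Alternatively, one can pass through the divergence form $\mathrm{div}\bigl(Du/\sqrt{1-|Du|^2}\bigr) = n$ and compare weak solutions by testing against $(v-u)^+$, after observing that a bounded Lipschitz viscosity subsolution of a quasilinear divergence-form equation is a distributional subsolution. Either way, the short balancing argument you give does not close.
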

\begin{proof}
 Let $u$ be a solution of \eqref{eq:cmcproblem} on $\Omega'$ with $v\leq u$ on $\partial\Omega'$. If $v>u$ at some interior point then, replacing $u$ by $u+c$ for $c>0$, one can arrange that $u$ touches $v$ from above at some point. If $v$ is a strict viscosity subsolution, this would imply that $L(u)>0$ and gives a contradiction. If $v$ is only a viscosity subsolution, one still gets the same conclusion by a perturbation argument.
\end{proof}
 Clearly the analog  discussion holds for supersolutions, by reversing inequalites.
The proof of Theorem \ref{thm: existence} is then consequence of the following  proposition stated in \cite{choitreibergs}.
\begin{prop}[{\cite[Proposition 6.1]{choitreibergs}}]\label{pr:extec}
Suppose there exist functions $\underline v\leq \overline v \in C^{0,1}(\mathbb R^n)$ which are {convex and proper} {viscosity} sub- and super-solutions to the constant mean curvature equation \eqref{eq:cmcproblem}. Then there exists a smooth solution $u$ of \eqref{eq:cmcproblem}
whose graph is an entire spacelike hypersurface of constant mean curvature $1$ in $\mathbb R^{n,1}$, which satisfies 
\[
     \underline v(\mathbf{x})\leq u(\mathbf{x})\leq \overline v(\mathbf{x})\quad\textrm{for all } \mathbf{x}\in\mathbb R^n\,.
\]
\end{prop}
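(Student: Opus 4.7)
The plan is to solve Dirichlet problems for \eqref{eq:cmcproblem} on an exhausting sequence of balls $B_R\subset\R^n$, using $\underline v$ and $\overline v$ as global barriers, and then pass to the limit as $R\to+\infty$. For each $R$, pick Lipschitz boundary data $\phi_R$ on $\partial B_R$ satisfying $\underline v|_{\partial B_R}\leq \phi_R\leq \overline v|_{\partial B_R}$, for instance simply $\phi_R=\underline v|_{\partial B_R}$. Since $\underline v$ and $\overline v$ provide respectively a lower and an upper admissible barrier at $\partial B_R$, the Dirichlet problem $L(u_R)=0$ on $B_R$ with $u_R|_{\partial B_R}=\phi_R$ admits a smooth spacelike solution $u_R$ by the existence theory for the CMC Dirichlet problem in Minkowski space developed in \cite{MR680653} and \cite{Treibergs:1982aa}. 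The viscosity comparison principle (Lemma \ref{lemma comparison viscosity}) immediately yields the uniform $C^0$ sandwich $\underline v\leq u_R\leq \overline v$ on $B_R$.

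I would then bootstrap this $C^0$ bound to uniform $C^1$ and higher-order estimates on compact subsets. For any compact $K\Subset\R^n$ and any $R$ large enough that $K\Subset B_R$, the interior gradient estimate of Bartnik--Simon for spacelike graphs of bounded mean curvature yields a bound $|Du_R|\leq 1-\delta(K)$ where $\delta(K)>0$ depends only on the uniform $C^0$ bound on $K$ and on the distance from $K$ to $\partial B_R$. Hence on $K$ the quasi-linear operator $L$ becomes uniformly elliptic with smooth coefficients, so standard Schauder estimates deliver uniform $C^{k,\alpha}(K)$ bounds for every $k$. A diagonal extraction then produces a subsequence $u_{R_j}$ converging in $C^{k,\alpha}_{\mathrm{loc}}$ to a smooth function $u:\R^n\to\R$ which is strictly spacelike, satisfies $L(u)=0$ pointwise, and obeys $\underline v\leq u\leq \overline v$. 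Its graph is globally defined and spacelike, hence is an entire spacelike hypersurface of constant mean curvature $1$, concluding the existence part.

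The central technical obstacle is the uniform interior gradient estimate, since in Lorentzian signature the direct maximum principle applied to $|Du|^2$ fails: the ellipticity of $L$ degenerates precisely as $|Du|\to 1$, so the $C^0$ bound from the barriers does not by itself prevent the solutions $u_R$ from becoming asymptotically null in the interior. The classical workaround is to work geometrically with the tilt function $|\langle\nu,e_{n+1}\rangle|=1/\sqrt{1-|Du|^2}$ along the graph, and to derive a differential inequality for it from a Simons-type identity for the squared norm $|B|^2$ of the second fundamental form. The maximum principle applied to this inequality converts the $C^0$ sandwich into a quantitative bound on the tilt, and therefore on $|Du|$, depending only on the $C^0$ data and on geometric quantities of $K$. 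This is precisely the content of \cite{MR680653} and \cite{Treibergs:1982aa}; once it is in hand, the remainder of the argument reduces to a diagonal compactness argument together with standard elliptic theory, exactly as sketched above.
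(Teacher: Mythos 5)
Your high-level architecture matches the paper's: exhaust $\R^n$ by compact domains, solve Dirichlet problems, use the barriers and Lemma \ref{lemma comparison viscosity} to sandwich the solutions, derive interior estimates, and extract a convergent subsequence. However, the decisive step --- the uniform interior gradient bound --- is where your proposal fails. You assert a Bartnik--Simon interior gradient estimate of the form $|Du_R|\leq 1-\delta(K)$ with $\delta(K)$ depending only on the $C^0$ bound on $K$ and the distance from $K$ to $\partial B_R$. No such estimate exists for spacelike CMC graphs in Minkowski space. Translated troughs are a counterexample: with the normalization of \eqref{eq:cmcproblem}, the functions
\[
\tilde u_a(\mathbf{x}) = \sqrt{(x_1-a)^2 + 1/n^2} - \sqrt{a^2 + 1/n^2}
\]
all satisfy $L(\tilde u_a)=0$, are each $1$-Lipschitz with $\tilde u_a(0)=0$ (hence uniformly bounded on any fixed $B_R$), yet $|D\tilde u_a(0)|\to 1$ as $a\to\infty$. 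So neither the raw $C^0$ sandwich nor the Simons-identity/tilt argument you invoke in your last paragraph can by itself produce uniform spacelikeness on compacta; the ellipticity of $L$ really can degenerate in the interior while the $C^0$ norm stays bounded.

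What actually prevents this degeneration is the information carried by the barriers beyond the $C^0$ bound: their convexity and, above all, their \emph{properness}. The paper's argument proceeds by first obtaining bounds on $|\II|$ and $|\nabla\II|$ via Treibergs' Propositions 3 and 4, which are Cheng--Yau-type estimates in terms of the \emph{intrinsic} distance to the boundary and the mean curvature, not $C^0$ norms. These bounds make the Gauss maps of the approximating graphs uniformly Lipschitz on $K$, forcing a dichotomy: either the graphs are uniformly spacelike on $K$, or the limit $u_\infty$ restricted to $K$ coincides with a portion of a lightlike hyperplane. The second alternative is then excluded by the inequality $u_\infty\geq \underline v$ together with the properness of $\underline v$, since a proper convex function cannot be dominated by an affine lightlike one on arbitrarily large $K$. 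This use of properness is exactly the ingredient missing from your proof. A secondary, more cosmetic difference: the paper prescribes \emph{constant} boundary data $k$ on the $C^{2,\alpha}$ convex domains $\Omega(k)$ pinched between the sublevel sets of $\overline v$ and $\underline v$, via Treibergs' Proposition 6, which avoids the regularity issues that arise if one prescribes the merely Lipschitz trace $\underline v|_{\partial B_R}$ as Dirichlet data on balls.
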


In Section \ref{ss:ctargument} we will provide an outline of the proof of Proposition \ref{pr:extec}. Assuming this proposition, let us now prove the existence part of Theorem \ref{thm: existence}. {The obstacle to directly applying Proposition \ref{pr:extec} is the properness assumption for the barriers.}

\begin{proof}[Proof of the existence part of Theorem \ref{thm: existence}]
Up to scaling, we can assume $H=1$. 
By Lemma \ref{lm:barr} 
{the convex functions $v_0$ and $v_1$ whose graphs are $\partial\mathcal{D}$ and $S_1$ are  a viscosity sub-solution and  super-solution of \eqref{eq:cmcproblem} respectively.}

Let us consider first the case where the set of  $\xi\in\Hyp^n$ which are orthogonal to some support plane of $\partial\mathcal{D}$  has non-empty interior. 
Then up to applying an isometry of Minkowski space we can assume that the vector $e=(0,0,\ldots,0,1)$ lies in the interior of this set.

{By assumption, there exists $\epsilon>0$ such that for every $\xi$ in the closed geodesic ball $B$ centered at $e$ of radius $\epsilon$, $\mathcal D$ admits a spacelike support hyperplane orthogonal to $\xi$. Since the points $\xi\in\partial B$ can be written as $\xi=(\cosh\epsilon)e+(\sinh\epsilon)\theta$ for $\theta\in \mathbb S^{n-1}$, such support plane is of the form $P_{\theta,c}=\{x_{n+1}=\tanh\epsilon\langle\theta,\mathbf x\rangle-c\}$ for some constant $c=c(\theta)$. By continuity of the support function in $B$, we can indeed find a constant $c$ independent of $\theta$ such that $\partial \mathcal D$ is in the future of $P_{\theta,c}$ for all $\theta\in \mathbb S^{n-1}$. 
Now, for every $\mathbf x\neq 0$ we can pick $\theta=\mathbf x/\|\mathbf x\|$ and deduce that $v_1(\mathbf x)>v_0(\mathbf x)\geq\mathrm{tanh}\epsilon \|\mathbf x\|-c$, thus showing that $v_0$ and $v_1$ are proper functions.}
 A direct application  of Proposition \ref{pr:extec} with $\underline v=v_0$ and $\overline v=v_1$, shows the existence of an entire  spacelike hypersurface $\Sigma$ of constant mean curvature equal to $1$ contained between $\partial\mathcal D$ and
 $S_1$. Since $\mathcal{D}(S_1) \subset \mathcal{D}(\Sigma) \subset \mathcal{D}$ and $\mathcal{D}(S_1) = \mathcal{D}$ by Proposition \ref{prop:T level sets}, we see that $\mathcal{D}(\Sigma) = \mathcal{D}$.

Assume now that support directions of $\mathcal D$ are contained in a hyperplane $P$ of $\mathbb H^n$. The hyperplane $P$ is the intersection of $\mathbb H^n$ with a timelike hyperplane  which we can identify to 
$\mathbb R^{n-1,1}$. In this way $\mathbb R^{n,1}$ is identified to  $\mathbb R\times\mathbb R^{n-1,1}$. Now by the assumption on the support directions we have that $\mathcal D$ splits as $\R\times \mathcal D^0$,
where $\mathcal D^0$ is a regular domain of $\mathbb R^{n-1,1}$. In fact we have that $\partial\mathcal D=\mathbb R\times\partial\mathcal D^0$ and $S_1=\mathbb R\times (S^0)_1$, where $(S^0)_1$ denotes the level set of the cosmological time of $\mathcal D^0$. Now by an inductive argument on the dimension we can assume that there is an entire hypersurface $\Sigma^0$ in $\mathbb R^{n-1,1}$  of constant mean curvature equal to $\frac{n}{n-1}$ and contained in $\mathcal D^0\cap J^{-}((S^0)_1)$.
The hypersurface $\Sigma=\mathbb R\times\Sigma^0$ has constant mean curvature equal to  $1$ and is contained in $\mathcal D\cap J^-(S_1)$. As in the previous case we can then conclude that $\mathcal D(\Sigma)=\mathcal D$.
\end{proof}

\subsection{Outline of the proof of Proposition \ref{pr:extec}}\label{ss:ctargument}
Proposition \ref{pr:extec} is stated in \cite{choitreibergs}, while its proof is referred to \cite{Treibergs:1982aa,MR680653}. In fact in the referred papers there are all the steps to prove that proposition, however it is never stated in the form we need.
So in order to help the reader we give here a brief overview of the proof of Proposition \ref{pr:extec}.

The first step is the existence of solution of the Dirichlet problem, which we do not prove.

\begin{prop}[{\cite[Proposition 6]{Treibergs:1982aa}}]\label{pr:dirichlet}
Let $\Omega$ be a compact convex domain in $\mathbb R^n$ with $C^{2,\alpha}$-boundary, and $k\in\mathbb R$.
There exists a function $u\in C^{2,\alpha}(\bar{\Omega})$ solving the Dirichlet problem
\begin{equation}
\begin{array}{l}
L(u)=(1-|Du|^2)\sum_{i}D_{ii}u+\sum_{ij} D_iuD_ju D_{ij}u-n(1-|Du|^2)^{3/2}=0\,,\\
u=k \quad\text{ on }\partial\Omega\,,
\end{array}
\end{equation}
such that
\[
   |Du|<c(n, H, \mathrm{diam}\Omega)<1~.
\]
\end{prop}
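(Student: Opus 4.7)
The plan is to apply the continuity method. Introduce a homotopy $L_t$, $t\in[0,1]$, of elliptic quasi-linear operators joining a trivially solvable Dirichlet problem at $t=0$ (for example Laplace's equation) to $L_1=L$ at $t=1$; one explicit choice is
\[L_t(u)=(1-t|Du|^2)\sum_i D_{ii}u+t\sum_{ij}D_iuD_juD_{ij}u-tn(1-|Du|^2)^{3/2},\]
which is uniformly elliptic as long as $|Du|$ stays strictly below $1$. The set $I\subseteq[0,1]$ of parameters for which the Dirichlet problem is solvable in $C^{2,\alpha}(\bar\Omega)$ contains $t=0$; openness of $I$ follows from the implicit function theorem in H\"older spaces, since at any solution with $|Du|$ bounded away from $1$ the linearization of $L_t$ is linear uniformly elliptic with H\"older coefficients and is invertible on $\{u\in C^{2,\alpha}(\bar\Omega):u|_{\partial\Omega}=k\}$ by Schauder theory. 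Closedness of $I$ reduces to $t$-uniform a priori $C^{2,\alpha}(\bar\Omega)$ estimates on solutions, and that is where all the work lies.

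The estimates proceed in four stages. The $C^0$ bound is automatic: a spacelike $u$ is $1$-Lipschitz and $u=k$ on $\partial\Omega$, so $|u-k|\le\mathrm{diam}(\Omega)$ on $\bar\Omega$. For the boundary gradient bound $|Du|\le c_0<1$ on $\partial\Omega$, I would construct explicit barriers from hyperboloids: at each $x_0\in\partial\Omega$, fit a piece of a hyperboloid of suitable mean curvature tangent to the horizontal hyperplane $x_{n+1}=k$ at $(x_0,k)$, whose graph $\overline v$ over $\Omega$ satisfies $L_t(\overline v)\le 0$ and, using convexity of $\Omega$ and $C^{2,\alpha}$ regularity of $\partial\Omega$, $\overline v\ge k=u$ on $\partial\Omega$. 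The comparison principle (Lemma \ref{lemma comparison viscosity}) then gives $u\le\overline v$ on $\bar\Omega$, and coincidence at $x_0$ yields a one-sided bound on the normal derivative; a symmetric lower barrier produces the other inequality. With $|Du|\le c<1$ secured globally on $\bar\Omega$, the operator $L_t$ is uniformly elliptic along the solution; one obtains $C^{1,\alpha}$ by De~Giorgi--Nash--Moser theory and then upgrades via Schauder on the linearization to the $C^{2,\alpha}$ estimate needed for closedness.

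The main obstacle, and the essentially Lorentzian difficulty, is the interior gradient estimate: if $|Du|$ approaches $1$ the equation degenerates on the light cone. The standard tool is a Bernstein-type maximum-principle argument applied to the auxiliary function
\[w=\frac{1}{\sqrt{1-|Du|^2}},\]
which is geometrically minus the inner product of the future unit normal to the graph with $\partial/\partial x_{n+1}$, i.e.\ the hyperbolic angle between the graph and the horizontal. A direct computation of the linearized Laplacian of $w$ on a CMC-$1$ graph produces a differential inequality of the schematic form $L_{\mathrm{lin}}w\ge |B|^2 w-\text{(lower order)}$, where $|B|^2$ is the squared norm of the shape operator, together with a Kato-type gradient term. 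Testing this against a suitable cut-off and combining with the boundary bound on $w$ from the previous step yields the quantitative interior bound $w\le W(n,\mathrm{diam}(\Omega))$, equivalently $|Du|\le c(n,\mathrm{diam}(\Omega))<1$. Once this estimate is in hand the continuity method closes and Proposition \ref{pr:dirichlet} follows.
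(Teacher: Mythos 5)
The paper does not prove this proposition; the surrounding text states explicitly that it is ``the existence of solution of the Dirichlet problem, which we do not prove'', citing \cite[Proposition 6]{Treibergs:1982aa}. So there is no in-paper proof to compare against, and the relevant question is whether your sketch is sound and consistent with what the cited source does. Your overall strategy --- a continuity method with a homotopy of uniformly elliptic operators, openness by Schauder theory and the implicit function theorem, closedness by a priori $C^{2,\alpha}$ estimates, and an interior gradient bound via a Bernstein argument on the tilt $w=(1-|Du|^2)^{-1/2}$ --- is indeed the standard route and matches the structure of Treibergs' and Bartnik--Simon's work.

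However, your boundary gradient estimate contains a genuine gap. The constant upper barrier $\overline v\equiv k$ (which has $L(k)=-n<0$) already gives $u\le k$ and hence $\partial_\nu u(x_0)\ge 0$ at a boundary point $x_0$ (with $\nu$ the outward normal); the nontrivial estimate is the \emph{upper} bound $\partial_\nu u(x_0)\le c_0<1$, which requires a spacelike \emph{subsolution} $\underline v$ lying below $u$ with $\underline v(x_0)=k$. Your ``symmetric lower barrier'' tangent to the horizontal plane at $(x_0,k)$ cannot be a subsolution: any graph tangent to $\{x_{n+1}=k\}$ from below at $(x_0,k)$ and lying below $k$ near $x_0$ is concave there, hence has nonpositive mean curvature and satisfies $L(\underline v)\le -n(1-|D\underline v|^2)^{3/2}<0$, which is a strict supersolution. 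And if instead you take a genuinely spacelike subsolution through $(x_0,k)$ with nonzero gradient in the $\nu$ direction (an upward-opening hyperboloid centred at $x_0-a\nu$, say), the sublevel set $\{\underline v\le k\}$ is a ball of radius $a$ tangent to $\partial\Omega$ at $x_0$ from inside, so the condition $\underline v\le k=u$ on all of $\partial\Omega$ fails for a general convex $\Omega$. The correct fix is a \emph{local} barrier argument: construct $\underline v$ only on $N\cap\Omega$ for a small neighbourhood $N$ of $x_0$, taking it below $k$ on $\partial\Omega\cap N$ and below the a priori $C^0$ lower bound $k-\mathrm{diam}(\Omega)$ on $\partial N\cap\Omega$. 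This is where the constant $c(n,H,\mathrm{diam}\,\Omega)$ actually comes from.

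Two smaller points. First, if you run a homotopy $L_t$, all barriers and the Bernstein inequality must hold uniformly in $t$; with your particular choice of $L_t$ this is not automatic (for instance, the lightlike graphs used as bounding objects are subsolutions of $L_1$ but not obviously of $L_t$ for $t<1$), and you should verify it. Second, the asserted interior gradient bound depending only on $n$, $H$, and $\mathrm{diam}(\Omega)$ is the genuinely Lorentzian part of the argument; identifying $w$ as the tilt is correct, but the key structural fact that makes the Bernstein estimate close is the $C^0$ control, i.e.\ that $|u-k|\le\mathrm{diam}(\Omega)$, which you do note. With the boundary barrier corrected and the $t$-uniformity checked, your outline gives a valid proof along the lines of the reference.
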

In fact $u$ is smooth by standard elliptic regularity theory. Now for any $k$ we denote by $\Omega_{+}(k)\subset\Omega_{-}(k)$  the $k$-sublevel  sets of $\overline v$ and $\underline v$ respectively. They are compact and convex by the assumption on $\underline v$ and $\overline v$.
Let $\Omega(k)$ be a convex subset with $C^{2,\alpha}$ boundary such that $\Omega_{+}(k)\subset\Omega(k)\subset\Omega_{-}(k)$.
Proposition \ref{pr:dirichlet} implies the existence of a function $u_k:\Omega(k)\to\mathbb R$ solving $L(u_k)=0$, $|Du_k|<1$ and $u_k|_{\partial\Omega(k)}=k$.
Moreover we have $\underline v\leq u_k\leq \overline v$ over $\Omega(k)$ {by Lemma \ref{lemma comparison viscosity}.} 
By construction, the family of domains $(\Omega(k))_{k\in\mathbb N}$ is an exhaustion of $\mathbb R^n$.
Since the $u_k$ are $1$-Lipschitz functions, up to taking a subsequence we can suppose that $u_k\rightarrow u_\infty$ uniformly on compact sets, with $\underline v\leq u_\infty\leq \overline v$.
We will prove that the convergence is in fact smooth on compact subsets.

Let $K$ be a compact region of $\mathbb R^n$. As in Step 6 of \cite[Theorem 1]{Treibergs:1982aa}, let us prove that the $C^3$-norm of $u_k$ is uniformly bounded over $K$.
Take a point $\mathbf{x}_0\in\mathbb R^n\setminus K$. In Step 5 of \cite[Theorem 1]{Treibergs:1982aa} it is shown that  there are constant $r_1<r_2$ such that for $k$ sufficiently large we have that
\[
\begin{array}{l}
d_k(\mathbf{x}_0, \mathbf x)<r_1\quad\forall\mathbf{x}\in K\,,\\
d_k(\mathbf{x}_0, \partial\Omega(k))>r_2\,,
\end{array}
\]
where $d_k(\mathbf{x},\mathbf{y})$ denotes here the intrinsic distance on the graph $\Sigma_{u_k}$ between $(\mathbf{x}, u_k(\mathbf{x}))$ and $(\mathbf{y}, u_k(\mathbf{y}))$.
Fix $a\in (r_1, r_2)$. We can then apply \cite[Proposition 3, Proposition 4]{Treibergs:1982aa} and prove a uniform bound of both the norm and the first covariant derivatives of the second fundamental form  of the hypersurface $\Sigma_{u_k}$ over $K$.
The bound on the norm of the second fundamental form implies that the Gauss maps of $\Sigma_{u_k}$ are uniformly Lipschitz on $K$. 
This implies that either the hypersurfaces $\Sigma_{u_k}$ are uniformly spacelike or the restriction of $\Sigma_{u_\infty}$ over $K$ is a portion of a lightlike plane.
On the other hand, since $u_\infty\geq \underline v$, we have that $\Sigma_{u_\infty}$ is not a lightlike plane, so taking $K$ sufficiently big the latter case cannot hold.
In conclusion there is a constant $c<1$ independent of $k$  such that $|D u_k|(\mathbf x)<c$ for every $\mathbf{x}\in K$ and $k\in\mathbb N$.

Now fix the standard frame $e_1,\ldots, e_n$ on $\mathbb R^n$ and denote by $\{\tilde e_i=e_i+ du_k(e_i)e_{n+1}\}$ the induced frame of $\Sigma_{u_k}$.
Notice that over $K$, $g_{ij}=\langle \tilde e_i, \tilde e_j\rangle$ is a uniformly bounded positive symmetric matrix, in the sense that its eigenvalues are bounded away from $0$ and $\infty$.
Putting $h_{ij}=\II(\tilde e_i, \tilde e_j)$ we have that $|\II|^2=\tr (g^{-1}h^2g^{-1})$, so that $\sum h_{ij}^2$ is also uniformly bounded over $K$.
On the other hand we have 
\begin{equation}\label{eq:ss}
h_{ij}=\frac{1}{\sqrt{1-|Du_k|^2}}D^2_{ij}u_k+\frac{1}{(1-|Du_k|^2)^{3/2}}f_{ij}(Du_k)
\end{equation}
 where $f_{ij}:\mathbb R^n\to\mathbb R$ are given smooth functions independent of $k$. Thus we deduce that the Hessian of $u_k$ is uniformly bounded over $K$.

This implies that Christoffel symbols of the Levi Civita connection of $\Sigma_{u_k}$ are uniformly bounded over $K$.
On the other hand, we have
\[
   \nabla_\ell h_{ij}=\partial_{\ell}h_{ij}-\Gamma_{i\ell}^{s}h_{is}-\Gamma_{j\ell}^{s}h_{js}\,.
\]
So {the bound on the first covariant derivative of the second fundamental form gives} that the derivatives of $h_{ij}$ are also uniformly bounded on $K$.
Differentiating \eqref{eq:ss} and using that $\|u_k\|_{C^2}$ is uniformly bounded on $K$ we deduce an estimate for the third derivatives of $u_k$.
So we conclude that $\|u_k\|_{C^3}$ is uniformly bounded and $|Du_k|<c<1$ over $K$.

By applying now a standard bootstrap argument we conclude that all the derivatives of $u_k$ are bounded over $K$, and by the Ascoli-Arzel\`a Theorem we conclude that $u_k$ lies in a compact subset of $C^{\infty}(K)$. Up to taking a subsequence, we can assume that $u_k$ converges $C^\infty$ to a limit $u_\infty$, which is a solution to \eqref{eq:cmcproblem} and satisfies $\underline v\leq u_\infty\leq \overline v$.

\section{CMC foliations} \label{sec: foliation}
In this section we prove that the entire CMC hypersurfaces foliate regular domains. That is, we will prove:

\begin{theorem*}[Foliation part of Theorem \ref{thm: existence}]
Let $\mathcal D$ be any regular domain. As $H$ varies in $(0,+\infty)$, the entire hypersurfaces with domain of dependence $\mathcal D$  and constant mean curvature $H$ analytically foliate $\mathcal D$.
\end{theorem*}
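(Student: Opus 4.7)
The plan is to prove the foliation statement in three stages: first showing that the CMC leaves $\{\Sigma_H\}_{H>0}$ are pairwise disjoint and time-ordered, then that they cover $\mathcal{D}$, and finally that the family depends analytically on $H$. By the existence and uniqueness parts of Theorem \ref{thm: existence} already proved, for each $H>0$ there is a unique entire CMC-$H$ hypersurface $\Sigma_H$ with $\mathcal{D}(\Sigma_H)=\mathcal{D}$. Applying the comparison Theorem \ref{thm: comparison} with $H_1 > H_2$ and $\Sigma_2 = \Sigma_{H_2}$, $\Sigma_1 = \Sigma_{H_1}$, we see that $\Sigma_{H_2} \subset J^+(\Sigma_{H_1})$. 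By strict monotonicity (the inequality must be strict, since otherwise two distinct CMC hypersurfaces in $\mathcal{D}$ would share a tangency point, contradicting the maximum principle applied to their difference), the leaves are pairwise disjoint with $\Sigma_{H_1}$ strictly in the past of $\Sigma_{H_2}$.

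To show that the leaves fill $\mathcal{D}$, fix $p \in \mathcal{D}$ and define
\[
H^+(p) = \sup\{H > 0 : p \in I^+(\Sigma_H)\}, \qquad H^-(p) = \inf\{H > 0 : p \in I^-(\Sigma_H)\}.
\]
I need to verify both sets are nonempty: for small $H$, the hyperboloid-like surface $\Sigma_H$ will have $p$ in its future once $H$ is sufficiently small, which can be shown by using troughs (or suitable hyperboloids) as barriers and invoking the comparison principle to sandwich $\Sigma_H$ above these; for large $H$, the uniform a priori bound from Lemma \ref{lem: C0 bound} (the cosmological time of $p$ is bounded above by $1/H$ on the past side, contradicting $T(p)$ being a fixed positive number) forces $p$ to lie in the past of $\Sigma_H$. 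The time-ordering gives $H^+(p) \geq H^-(p)$. Next I claim $H^+(p) = H^-(p)$ and $p \in \Sigma_{H^+(p)}$. By taking a sequence $H_k \nearrow H^+(p)$ and $H_k' \searrow H^-(p)$, standard compactness arguments for entire CMC hypersurfaces with a priori $C^0$ bounds (as developed in Section \ref{sec: existence}) show that $\Sigma_{H_k}$ and $\Sigma_{H_k'}$ subconverge smoothly to entire CMC hypersurfaces $\Sigma^\pm$ with mean curvatures $H^\pm(p)$, respectively, both having $p$ on them or on their closures. Monotonicity forces $\Sigma^-\subset J^+(\Sigma^+)$ while $p$ lies on the boundary of both in the appropriate sense, which via the maximum principle (or uniqueness in Theorem \ref{thm: existence}) gives $H^+(p) = H^-(p)$ and $p \in \Sigma_{H^+(p)}$.

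For analyticity, fix a leaf $\Sigma_{H_0}$ and parameterize nearby hypersurfaces as normal graphs $x \mapsto x + u(x)\nu(x)$ for bounded functions $u: \Sigma_{H_0} \to \R$ of small norm. The mean curvature of the normal graph defines a quasilinear analytic differential operator
\[
\mathcal{M}: u \mapsto H(\text{graph of } u)
\]
mapping a neighborhood of $0$ in a suitable global Hölder space $C^{2,\alpha}_b(\Sigma_{H_0})$ to $C^{0,\alpha}_b(\Sigma_{H_0})$. Its linearization at $u=0$ is $\tfrac{1}{n}(\Delta + \|B\|^2 - nH_0^2)$, which by the a priori bounds of Theorem \ref{thm: Cheng-Yau} and Proposition \ref{prop: Ricci lower bound} on $\Sigma_{H_0}$ yields a linear isomorphism of the appropriate global Hölder spaces; this is the main technical point to verify, and it uses both the uniform bound $\|B\|^2 \leq n^2 H_0^2$ coming from convexity together with the Omori-Yau-type maximum principle on $\Sigma_{H_0}$ to invert the operator on bounded functions. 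The analytic inverse function theorem in Banach spaces (Whittlesey's theorem) then gives a unique analytic family $H' \mapsto u_{H'} \in C^{2,\alpha}_b(\Sigma_{H_0})$ near $H_0$ with $\mathcal{M}(u_{H'}) = H'$. Comparing with the CMC foliation constructed above via the comparison principle (which implies $u_{H'}$ is bounded and uniquely determined by $H'$), we identify this graph with the leaf $\Sigma_{H'}$. Finally, since the coefficients of the operator are real-analytic, classical regularity (Morrey's theorem on analyticity of solutions to analytic elliptic equations) upgrades analyticity in $H'$ into joint analyticity in $(H', x)$, producing an analytic foliation chart.

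The main obstacle I anticipate is the Banach space setup for the analytic implicit function theorem: proving that the linearization of the mean curvature operator is an isomorphism between global Hölder spaces requires delicate maximum principle arguments on a complete non-compact Riemannian manifold with only a Ricci lower bound, analogous to but more refined than those appearing in Section \ref{sec: uniqueness}. The rest of the argument — establishing the sup/inf leaves and the existence of dominating/dominated CMC leaves for any $p$ — is largely parallel to the cocompact case treated in the cited references and should go through with the barriers constructed from cosmological time level sets and troughs.
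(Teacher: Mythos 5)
Your proposal follows the same three-stage structure as the paper's proof: (i) pairwise disjointness and time-ordering from Theorem~\ref{thm: comparison} plus the strong maximum principle, (ii) coverage of $\mathcal{D}$ via sup/inf barriers, using cosmological-time level sets and troughs, and (iii) analyticity via the analytic inverse function theorem for the normal-graph mean curvature operator on global H\"older spaces. So the route is the paper's route, not a genuinely different one. There are, however, two genuine errors.

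First, in Stage~2 your time-ordering is inverted. You correctly establish in Stage~1 that $\Sigma_{H_1}$ lies strictly in the past of $\Sigma_{H_2}$ for $H_1 > H_2$ (large $H$ means further in the past). Consequently, for fixed $p\in\mathcal{D}$, one has $p\in I^+(\Sigma_H)$ for \emph{large} $H$ (via the bound $d(\cdot,\Sigma_H)<1/H$ from Lemma~\ref{lem: C0 bound}, which pushes $\Sigma_H$ below any fixed cosmological time slice) and $p\in I^-(\Sigma_H)$ for \emph{small} $H$ (via troughs in an enclosing wedge, through Theorem~\ref{thm: comparison}). You have these two cases swapped, and as a result your definitions $H^+(p)=\sup\{H : p\in I^+(\Sigma_H)\}$ and $H^-(p)=\inf\{H : p\in I^-(\Sigma_H)\}$ give $+\infty$ and $0$ respectively and are not the quantities you want. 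The correct definitions are $\inf\{H : p\in I^+(\Sigma_H)\}$ and $\sup\{H : p\in I^-(\Sigma_H)\}$, and then the compactness argument you sketch closes the gap.

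Second, and more seriously, the linearization you write down is wrong and the error is fatal to your invertibility argument. In the Lorentzian setting the linearization of the mean curvature operator at $u=0$ is (up to the $1/n$ normalization) $L=\Delta-|\II|^2$; there is no $+\|B\|^2$ term and no $-nH_0^2$ term --- you appear to be carrying over Riemannian sign intuition (where the Jacobi operator has $+|A|^2$ plus an ambient Ricci term). The sign matters: the paper's invertibility proof hinges on the zero-order coefficient being bounded \emph{above} by $-nH^2<0$ (from Cauchy--Schwarz $|\II|^2\geq nH^2$), which makes the constant functions $\pm|f|_0/(nH^2)$ usable as barriers. Your zero-order term $\|B\|^2-nH_0^2$ is \emph{nonnegative}, so the constant-barrier maximum principle argument fails entirely; you cannot invert the operator on bounded functions this way. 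Separately, for joint analyticity in $(H',x)$ the paper invokes Siciak's theorem on separate real analyticity with a uniform radius of convergence, combined with the Hopf-style interior analyticity estimates; your invocation of Morrey's theorem plus ``classical regularity'' leaves the passage from separate to joint analyticity unaddressed.
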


We shall first show that the CMC hypersurfaces continuously foliate $\mathcal D$, and then the analytic dependence on $H$. The proof will be split in Sections \ref{subsec: cont foliation} and \ref{subsec: smooth foliation}.

\subsection{Continuous foliations} \label{subsec: cont foliation}

To show that each regular domain $\mathcal D$ is foliated by CMC hypersurfaces,
let us denote by $\Sigma_H$ the unique entire hypersurface of CMC $H>0$ such that $\mathcal D(\Sigma_H)=\mathcal D$.

\begin{lemma} \label{lemma cont fol1}
The hypersurfaces $\Sigma_H$ are pairwise disjoint.
\end{lemma}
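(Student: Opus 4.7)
The proof should follow almost immediately from the comparison theorem. I plan to handle $H_1 \ne H_2$, say $H_1 > H_2 > 0$, and observe that since both $\Sigma_{H_1}$ and $\Sigma_{H_2}$ have domain of dependence $\mathcal D$, we have $\Sigma_{H_2} \subset \mathcal D = \mathcal D(\Sigma_{H_1})$. Applying Theorem~\ref{thm: comparison} with $(\Sigma_1,\Sigma_2) = (\Sigma_{H_1},\Sigma_{H_2})$ then yields the weak causal ordering $\Sigma_{H_2} \subset J^+(\Sigma_{H_1})$.

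To upgrade this to disjointness I would apply the pointwise maximum principle to the CMC equation \eqref{eq:cmcproblem}. Suppose for contradiction that $p \in \Sigma_{H_1} \cap \Sigma_{H_2}$. After an isometry of $\R^{n,1}$, I may take $p$ to be the origin and $T_p\Sigma_{H_1}$ to be the horizontal hyperplane $\{x_{n+1}=0\}$. In a neighborhood of $p$ both $\Sigma_{H_i}$ are graphs of functions $u_1, u_2$ with $u_i(0)=0$, and $Du_1(0)=0$ by the choice of coordinates. Using that both hypersurfaces are spacelike, the local causal future of $\Sigma_{H_1}$ coincides with the epigraph of $u_1$, so the inclusion $\Sigma_{H_2}\subset J^+(\Sigma_{H_1})$ becomes the pointwise inequality $u_2\ge u_1$ near the origin. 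Since $(u_2-u_1)(0)=0$, the function $u_2-u_1$ attains a local minimum at $0$, which forces $Du_2(0)=0$ and $\Delta(u_2-u_1)(0)\ge 0$.

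On the other hand, since $Du_i(0)=0$, the equation $L_{H_i}(u_i)=0$ collapses at the origin to $\Delta u_i(0) = nH_i$. Subtracting gives
\[
\Delta(u_2-u_1)(0) \;=\; n(H_2 - H_1) \;<\; 0,
\]
contradicting the minimum condition above. Hence $\Sigma_{H_1}\cap \Sigma_{H_2}$ must be empty.

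I do not anticipate a serious obstacle; the only point that deserves care is the translation of the causal inclusion into the graph inequality $u_2 \ge u_1$ at a putative intersection point, which uses only that both hypersurfaces are spacelike and that $J^+(\Sigma_{H_1})$ is locally the upper side of $\Sigma_{H_1}$ in the chosen coordinates. Notably, the argument avoids any invocation of Hopf's lemma: the pointwise comparison at the touching point is enough because the gradients of $u_1$ and $u_2$ automatically agree there, reducing both CMC operators to the bare Laplacian.
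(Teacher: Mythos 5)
Your argument is correct and follows the paper's two-step structure: apply Theorem~\ref{thm: comparison} to obtain the weak causal ordering $\Sigma_{H_2}\subset J^+(\Sigma_{H_1})$, then rule out a touching point by a maximum-principle argument. The paper simply cites the strong maximum principle for the second step; your explicit second-order computation at a tangency (the gradients agree there, so the CMC operators reduce to Laplacians and $\Delta(u_2-u_1)(0)=n(H_2-H_1)<0$ contradicts the local minimum) is a valid and slightly more elementary unpacking that exploits $H_1\neq H_2$ to avoid Hopf's lemma.
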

\begin{proof} In fact, by Theorem \ref{thm: comparison}, if $H_1>H_2$ then $\Sigma_2$ does not meet the past of $\Sigma_1$. By the strong maximum principle, $\Sigma_2$ lies strictly in the future of $\Sigma_1$.
\end{proof} 
It thus remains to show that any point of $\mathcal D$ is in some hypersurface $\Sigma_H$.

\begin{lemma} \label{lemma cont fol2}
Given every $p\in\mathcal D$, there exists $H$ such that $p\in\Sigma_H$.
\end{lemma}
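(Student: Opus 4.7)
The plan is to reduce the lemma to an intermediate value argument. By Lemma \ref{lemma cont fol1} the family $\{\Sigma_H\}_{H>0}$ is totally time-ordered, so it suffices to establish continuous dependence on $H$ and to locate a value $H^\ast$ for which $\Sigma_{H^\ast}$ passes through $p$. Writing $\Sigma_H$ as the graph of $f_H:\R^n\to\R$ and denoting by $\mathbf{x}_p$ and $p_{n+1}$ respectively the projection and the last coordinate of $p$, the goal is to produce $H^\ast>0$ with $f_{H^\ast}(\mathbf{x}_p)=p_{n+1}$.

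I first locate the candidate $H^\ast$ using barriers. For $H$ large: Lemma \ref{lm:barr}, applied with mean curvature $H$ in place of $1$ and with the tangent hyperboloid of radius $1/H$, shows $\Sigma_H\subset J^-(T^{-1}(1/H))$. Since $T(p)$ is finite and positive, for any $H>1/T(p)$ the point $p$ lies strictly in the future of $T^{-1}(1/H)$ and hence of $\Sigma_H$, so $A := \{H>0 : p \in I^+(\Sigma_H)\}$ contains $(1/T(p),\infty)$. For $H$ small: because $\mathcal D$ is regular it admits two non-parallel null support half-spaces whose intersection is a wedge $W \supset \mathcal{D}$; the trough $T^W_H \subset W$ is an entire CMC-$H$ hypersurface with $\mathcal{D}(T^W_H)=W\supset\mathcal{D}(\Sigma_H)$, so Theorem \ref{thm: comparison} yields $T^W_H \subset J^-(\Sigma_H)$. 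As the trough recedes to future timelike infinity when $H\to 0^+$, for $H$ small enough $p\in I^-(T^W_H)\subset I^-(\Sigma_H)$, so $C := \{H>0 : p \in I^-(\Sigma_H)\}$ is non-empty. Lemma \ref{lemma cont fol1} shows that $A$ is upward-closed, $C$ is downward-closed, and $B := \{H : p \in \Sigma_H\}$ is at most a singleton; since any element of $C$ is smaller than any element of $A$, the number $H^\ast := \inf A = \sup C$ is well-defined in $(0,\infty)$.

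The remaining task is to show $H^\ast \in B$, which I do by a continuity argument. The monotonicity of $H\mapsto f_H$ ensures that the one-sided pointwise limits $f^-(\mathbf{x}) := \lim_{H\uparrow H^\ast}f_H(\mathbf{x})$ and $f^+(\mathbf{x}) := \lim_{H\downarrow H^\ast}f_H(\mathbf{x})$ exist and satisfy $f^+\leq f_{H^\ast}\leq f^-$. On any compact set the functions $\{f_H\}$ for $H$ in a closed neighborhood of $H^\ast$ are uniformly sandwiched between two fixed CMC graphs, which gives uniform $C^0$ bounds; the interior regularity machinery for the CMC equation outlined in Subsection \ref{ss:ctargument} then upgrades these to $C^\infty_{\mathrm{loc}}$ compactness of the family. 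Thus $f^\pm$ are smooth solutions of the CMC-$H^\ast$ equation whose graphs are entire spacelike hypersurfaces. Moreover, the chain of inequalities $f_{H'}\leq f^+\leq f_{H^\ast}\leq f^-\leq f_H$ valid for $H<H^\ast<H'$ forces the Legendre transforms of $f^+$, $f_{H^\ast}$ and $f^-$ to coincide on $\mathbb S^{n-1}$ with the common null support function $\varphi$ of the approximating $\Sigma_H$; hence by Proposition \ref{prop: support function} the three entire hypersurfaces all have domain of dependence $\mathcal D$. The uniqueness proved in Section \ref{sec: uniqueness} then forces $f^-=f^+=f_{H^\ast}$, so $H\mapsto f_H(\mathbf{x}_p)$ is continuous, and by the intermediate value theorem it must attain the value $p_{n+1}$ exactly at $H=H^\ast$; that is, $p\in\Sigma_{H^\ast}$.

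The main obstacle is the $C^\infty_{\mathrm{loc}}$ convergence step, in which uniform $C^0$ bounds must be upgraded to uniform bounds on higher derivatives in order to pass to the smooth limit. This is precisely the content of the uniform gradient, Hessian and third-derivative estimates recalled in Subsection \ref{ss:ctargument}; the new observation needed here is only that sandwiching between fixed CMC graphs already provides the required $C^0$ control, so those interior estimates apply directly on every compact set.
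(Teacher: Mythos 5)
Your proof is correct and takes essentially the same approach as the paper's: locate $H^\ast$ as the common endpoint of the upward-closed and downward-closed sets using cosmological-time level sets and troughs as barriers, pass to one-sided limits in $H$ via the interior estimates of Section \ref{ss:ctargument}, and conclude via uniqueness. (The only cosmetic difference is that the paper invokes Lemma \ref{lem: C0 bound} directly to place $\Sigma_H$ in the past of $T^{-1}(1/H)$, rather than rescaling Lemma \ref{lm:barr}, and it leaves the identification of the domain of dependence of the limit surface implicit rather than arguing via Legendre transforms as you do.)
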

\begin{proof}
Again from Theorem \ref{thm: comparison}, if $u_H$ is the function on $\R^n$ defining  $\Sigma_H$, $u_{H_1}<u_{H_2}$ if $H_1>H_2$.
Let $p\in \mathcal D$, and let 
\begin{equation}\label{eq:defi Hpm}
H_-:=\inf\{H\,:\,p\in I^+(\Sigma_H)\}\qquad \text{and}\qquad H_+:=\sup\{H\,:\,p\in I^-(\Sigma_H)\}~.
\end{equation}
We first claim that both sets of which we take the infimum/supremum in \eqref{eq:defi Hpm} are non-empty, so that $H_-,H_+\in(0,+\infty)$. Since the point $p$ belongs to at most one $\Sigma_H$ by disjointness (Lemma \ref{lemma cont fol1}), 
it will then follow that $H_-=H_+$.

For the claim, first let $k>0$ be such that $p$ is in the future of $S_k$, where $S_k=T^{-1}(k)$ are the level sets of the cosmological time. By Lemma \ref{lem: C0 bound}, for large $\bar H$ the CMC hypersurface $\Sigma_{\bar H}$ is in the past of $S_k$, hence $p\in I^+(\Sigma_{\bar H})$. Concerning $H_+$, we shall show that $p$ is in the past of some $\Sigma_{\bar H}$. For this purpose, let $\mathcal D_0$ be a wedge as in Figure \ref{fig:trough} --- that is a regular domain which is the intersection of exactly two future half-spaces bounded by non-parallel lightlike hyperplanes --- such that $\mathcal D\subset \mathcal D_0$. Such a $\mathcal D_0$ exists because by definition of regular domain $\mathcal D$ is the intersection of at least two future half-spaces bounded by non-parallel lightlike hyperplanes. Since the entire CMC hypersurfaces with domain of dependence $\mathcal D_0$ (which are troughs) foliate $\mathcal D_0$, there exists $\bar H$ such that $p$ is in the past of the trough having constant mean curvature $\bar H$. Then by Theorem \ref{thm: comparison}, $p$ is also in the past of $\Sigma_{\bar H}$, which concludes the claim. 

Now let $H_0:=H_-=H_+$. Define
$$u_-:=\sup_{H>H_0} u_H\qquad \text{and}\qquad u_+:=\inf_{H<H_0} u_H~.$$ 
Then as $H\to H_0^-$, $u_H$ converges uniformly on compact sets to $u_-$, and by an argument similar to that given in Section \ref{ss:ctargument}, the convergence is indeed smooth on compact sets. (One uses again the function defining $\partial\mathcal D$ as a subsolution to ensure that the $\Sigma_H$ are uniformly spacelike once the uniform boundedness of the second fundamental form is established.)
Hence $u_-$ defines an entire hypersurface of constant mean curvature $H_0$ with domain of dependence $\mathcal D$ . The same holds for $u_+$. By uniqueness of Theorem \ref{thm: uniqueness}, $u_-=u_+$ and thus $p\in \Sigma_{H_0}$.
\end{proof}
Lemma \ref{lemma cont fol1} and \ref{lemma cont fol2} together show that every regular domain $\mathcal D$ is continuously foliated by the CMC hypersurfaces with domain of dependence $\mathcal D$.

\subsection{Analyticity of the foliation} \label{subsec: smooth foliation}

From standard elliptic regularity theory {\cite[Theorem 6.17]{GT} \cite{Hopf:1932aa}}, we know that every CMC hypersurface is analytic in the sense that it is locally the graph of an analytic function on $\R^n$. In particular, the foliation of the previous section has analytic leaves. In this section, we show that the foliation itself is analytic in the sense that it can be trivialized by analytic charts. 

Fix any regular domain $\mathcal{D}$ and $H > 0$, and let $\Sigma$ be the leaf of the CMC foliation of $\mathcal{D}$ with mean curvature $H$. For any $\tau > 0$, let $u_\tau: \Sigma \to \R$ be such that the normal graph $\{x + u_\tau(x) \nu(x) | x \in \Sigma\}$ is the unique entire spacelike hypersurface of constant mean curvature $\tau$ whose domain of dependence is $\mathcal{D}$. For example, $u_H = 0$. Also note that for any $\tau$, $u_\tau$ is bounded above by Lemma \ref{lem: C0 bound}. We need to show that for $\tau$ close to $H$, the function $u_\tau(x)$ is jointly analytic in $\tau$ and $x$ and $du_\tau/d\tau$ is nonvanishing, for then $(x, \tau) \mapsto x + u_\tau(x) \nu(x)$ analytically trivializes the foliation in a neighborhood of $\Sigma$.

By the main theorem of \cite{Siciak:1969aa}, to show that a function of several real variables is jointly analytic in a region, it is sufficient to show that in that region it is separately real analytic in each variable \emph{with a uniform lower bound on the radius of convergence}. To apply this directly, we should view $u_\tau(x)$ as a function on $\R^{n} \times \R$. This is easily accomplished locally in $x$ by pulling $u$ back via the exponential map of $\Sigma$. Then the elliptic regularity theory of \cite{Hopf:1932aa} implies that the functions $u_\tau(x)$ are real analytic in $x$, and also gives a lower bound -- uniform for small $\tau$ -- for the radius of convergence in $x$. We now show that for each fixed $x$, the function $u_\tau(x)$ is analytic in $\tau$ with a lower bound -- uniform in $x$ -- on its radius of convergence.

In fact, we will show that $\tau \mapsto u_\tau$ defines an analytic path in a certain H\"older space of functions on $\Sigma$. Let $r$ be the radius of convergence of this path at $\tau = H$. Since evaluation at $x$ is a bounded linear function on the H\"older space, this implies that for each $x$, $u_\tau(x)$ is an analytic function of $\tau$ with radius of convergence $r$ at $\tau = H$. We now define the H\"older space.

For any function $u \in C^{\infty}(\Sigma)$, and any nonnegative integer $k$ and $\alpha \in (0,1)$, define the global $(k,\alpha)$-H\"older norm of $u$ by
\[
|u|_{k,\alpha;\Sigma} = \max_{j \leq k} \left(\sup_{x \in \Sigma} 
|\nabla^j u(x)|\right) + \sup_{\substack{x, y \in \Sigma}} \left(
\frac{|\nabla^k u(x) - P_{y,x}\nabla^k u(y)|}{\mathrm{dist}(x,y)^\alpha}\right)
\] 
where $\nabla^k u$ is the $k$-tensor $\nabla_{i_1}\cdots\nabla_{i_k} u$, $P_{y,x}$ is the parallel transport along the unique geodesic from $y$ to $x$, and $\mathrm{dist}(x,y)$ is the intrinsic distance on $\Sigma$. The completion of the subspace of $C^\infty(\Sigma)$ on which this norm is finite is the Banach space $C^{k,\alpha}(\Sigma)$. We have used the uniqueness of the geodesic between any two points of $\Sigma$ only for convenience; if we define the supremum only over points $x$ and $y$ that are, say, within a distance one of each other, we get an equivalent norm since if $\mathrm{dist}(x,y) > 1$, then
\begin{equation}
\begin{split} \label{eqn: local Holder}
\frac{|\nabla^k u(x) - P_{y,x}\nabla^k u(y)|}{\mathrm{dist}(x,y)^\alpha} \leq 2 |u|_{k,0;\Sigma}.
\end{split}
\end{equation}

Fix $\alpha \in (0,1)$. Let $\Omega \subset C^{2,\alpha}(\Sigma)$ be an open neighborhood of 0 consisting of functions $u$ whose normal graph is still a spacelike surface. Such a neighborhood exists because the second fundamental form of $\Sigma$ is globally bounded. Define the mean curvature operator $\mathcal{H}: \Omega \to C^{0,\alpha}(\Sigma)$ which sends a function $u$ to the function which gives the mean curvature of the normal graph of $u$ at a point $x$ in $\Sigma$. So for example $\mathcal{H}(u_\tau) = \tau$ and $\mathcal{H}(0) = H$. Since $\mathcal{H}$ is an algebraic function of $u$, its derivatives, and the (analytic) second fundamental form of $\Sigma$, it is an analytic map between Banach manifolds. By Lemma \ref{lem: L inverse} below, it has an analytic inverse in an open neighborhood of the constant function $H$ in $C^{0,\alpha}(\Sigma)$. Hence, for $\tau$ close to $H$, the functions $u_\tau$ depend analytically on $\tau$. Combined with the preceding arguments, this implies that $u_\tau(x)$ is jointly analytic in $x$ and $\tau$ for $\tau$ close to $H$. Finally, by Lemma \ref{lem: dudtau} below, $du_\tau/d\tau$ is nonvanishing, which completes the proof of analyticity of the foliation.

\begin{lemma} \label{lem: L inverse}
The operator $\mathcal{H}$ has an analytic inverse in a neighborhood of the constant function $H$.
\end{lemma}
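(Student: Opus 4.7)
The plan is to apply the analytic inverse function theorem in Banach spaces: since $\mathcal{H}$ is already known to be an analytic map, it suffices to show that the Fr\'echet derivative $L := D\mathcal{H}|_0$ is a topological linear isomorphism from $C^{2,\alpha}(\Sigma)$ to $C^{0,\alpha}(\Sigma)$. A standard variational computation (easily cross-checked by differentiating the explicit concentric-hyperboloid foliation of $\R^{n,1}$, for which $H(t) = H_0/(1 + tH_0)$ and hence $\tfrac{d}{dt}\big|_0 H = -H_0^2 = -|B|^2/n$) identifies the linearization as the Jacobi operator
$$L(v) = \frac{1}{n}\bigl(\Delta v - |B|^2\, v\bigr),$$
where $\Delta$ and $|B|^2$ are the intrinsic Laplacian and the squared norm of the shape operator of $\Sigma$.

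By Theorem \ref{thm: Cheng-Yau} and the discussion immediately following it, every eigenvalue of $B$ lies in $[0, nH]$; applying Cauchy--Schwarz to $\mathrm{tr}\,B = nH$ then yields the two-sided bound $nH^2 \leq |B|^2 \leq n^3 H^2$, and the Gauss equation further gives a uniform bound on the sectional curvature of $\Sigma$. Consequently $L$ is a uniformly elliptic operator with bounded coefficients on the complete Riemannian manifold $\Sigma$ of bounded geometry, and in particular defines a bounded linear map between the indicated global H\"older spaces.

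Injectivity will follow from the Omori--Yau maximum principle exactly in the spirit of Section \ref{sec: uniqueness}. If $Lv = 0$, then $v$ is bounded (being in $C^{2,\alpha}(\Sigma)$), and since $\Sigma$ is complete with Ricci curvature bounded below (Theorem \ref{thm: Cheng-Yau} and Proposition \ref{prop: Ricci lower bound}), there exists a sequence $\{p_k\}\subset\Sigma$ along which $v(p_k)\to\sup v$ and $\Delta v(p_k)\to 0$; evaluating $\Delta v = |B|^2 v$ at $p_k$ and using the uniform lower bound $|B|^2 \geq nH^2 > 0$ forces $\sup v \leq 0$, and replacing $v$ by $-v$ yields $v\equiv 0$. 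For surjectivity I would use the standard construction on complete manifolds of bounded geometry: solve the Dirichlet problem $Lv_k = f$ with $v_k|_{\partial B_k}=0$ on an exhaustion of $\Sigma$ by geodesic balls $B_k$; the maximum principle (again using $|B|^2 \geq nH^2$) provides a uniform $C^0$-bound $\|v_k\|_\infty \leq C\|f\|_\infty$, and global Schauder estimates --- uniform in $k$ by the bounded geometry of $\Sigma$ --- upgrade this to uniform $C^{2,\alpha}$-bounds on compact subsets, producing a limit solution by a diagonal argument. The open mapping theorem then promotes $L$ to a topological isomorphism, and the analytic inverse function theorem delivers an analytic local inverse of $\mathcal{H}$ near the constant function $H$.

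The main obstacle is the surjectivity step: because $\Sigma$ is non-compact, standard Fredholm theory is not available off the shelf, and one must verify carefully that $\Sigma$ has bounded geometry to a sufficient order --- in particular, bounds on higher covariant derivatives of $B$ via Simons-type identities together with the a priori estimates reviewed in Section \ref{ss:ctargument} --- so that the Schauder estimates on the exhausting balls are genuinely uniform in $k$.
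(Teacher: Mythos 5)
Your proposal follows essentially the same route as the paper: apply the analytic inverse function theorem, identify the linearization with the Jacobi operator $\Delta - |\II|^2$ (you correctly include the $1/n$ normalization that the paper elides), use $|\II|^2 \geq nH^2$ from Cauchy--Schwarz for the uniform zeroth-order barrier, and construct the inverse by solving Dirichlet problems on an exhaustion of $\Sigma$ with Schauder estimates made uniform by Treibergs' bounds on $\II$ and $\nabla\II$. Your explicit Omori--Yau argument for injectivity usefully fills a step the paper states rather tersely, and the ``main obstacle'' you flag at the end is already handled: the $\II$ and $\nabla\II$ bounds recalled in Section \ref{ss:ctargument} give uniform $C^{1,\alpha}$ control of the metric and coefficients in normal coordinates, which is all that the interior $C^{2,\alpha}$ Schauder estimate requires, so no Simons-type identities for higher derivatives of $B$ are needed.
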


\begin{proof}

This is a consequence of the analytic inverse function theorem once we know that the linearization of $\mathcal{H}$ at 0 is invertible.
Let $L: C^{2,\alpha}(\Sigma) \to C^{0,\alpha}(\Sigma)$ be the linearization of $\mathcal{H}$ at $u=0$. It is a standard calculation in differential geometry, equivalent to the second variation formula for area, that $L = \Delta - |\II|^2$. 

The construction of an inverse for $L$ is analogous to our proof of existence for the full nonlinear problem. For any bounded domain $K \subset \Sigma$, and any $f \in C^{0,\alpha}(\overline{K})$, we can use Schauder theory to find a solution $u^K \in C^{2,\alpha}(\overline{K})$ to the Dirichlet problem $\{L u = f, u |_{\partial K} = 0\}$. Now fix $f \in C^{0,\alpha}(\Sigma)$. Taking a relatively compact exhaustion $K_i$ of $\Sigma$, we need to show that the solutions $u^{K_i}$ to the Dirichlet problems $\{L u = f_{K_i}, u |_{\partial K_i} = 0\}$ converge along with their second derivatives to a function $u \in C^{2,\alpha}(\Sigma)$ and that furthermore 
\begin{equation}
\begin{split} \label{eqn: L inverse bounded}
|u|_{2,\alpha;\Sigma} \leq C |f|_{0,\alpha;\Sigma}
\end{split}
\end{equation}
for a constant $C$ independent of $f$. Then $Lu = f$ and the bound (\ref{eqn: L inverse bounded}) implies that $L$ is injective with bounded inverse.

From Treibergs' bounds on $|\II|$ and $|\nabla \II|$, we conclude as in Section \ref{ss:ctargument} that the metric and Christoffel symbols are uniformly bounded in normal coordinates on every intrinsic ball of radius two in $\Sigma$. This gives uniform bounds on the ellipticity constants of $L$ in normal coordinates on such balls, and also uniform bounds on the difference between covariant derivatives and partial derivatives in normal coordinates. By \cite[Theorem 6.2]{GT}, for every ball $B_2$ of radius 2 and every $u \in C^{2, \alpha}(B_2)$, we have the a priori estimate on the corresponding ball of radius one
\begin{equation}
\begin{split} \label{eqn: GT}
|u|_{2,\alpha;B_1} \leq C(|u|_{0;B_2} + |Lu|_{0,\alpha;B_2})
\end{split}
\end{equation}
for some constant $C$.

To deal with the first term on the right hand side of (\ref{eqn: GT}), we use the maximum principle. By Cauchy-Schwarz we have a positive \emph{lower} bound $n H^2 \leq |\II|^2$, so the constant functions $\pm \frac{|f|_0}{nH^2}$ are super- and sub- solutions to the problem $L u = f$. These barriers bound $|u^{K_i}|$ independently of $i$ by a multiple of the supremum of $f$. Hence for the functions $u^{K_i}$, the bound (\ref{eqn: GT}) reduces to
\begin{equation}
\begin{split} \label{eqn: uKi}
|u^{K_i}|_{2,\alpha;B_1} \leq C |f|_{0,\alpha;B_2}  \leq C |f|_{0, \alpha; \Sigma}
\end{split}
\end{equation}
for each ball $B_2 \subset K_i$.

 As a consequence, the functions $u^{K_i}$ are equicontinuous along with their first two derivatives, so we may extract a subsequence converging in $C^k_\mathrm{loc}$ to a solution $u$ on $\Sigma$ of $L u = f$. Furthermore, we retain the bound (\ref{eqn: uKi}) for each ball $B_1 \subset \Sigma$.
By Equation (\ref{eqn: local Holder}) these local bounds are enough to bound $u_\Sigma$ also in the global H\"older space $C^{2,\alpha}(\Sigma)$, also by a constant times $|f|_{0, \alpha; \Sigma}$. Hence, we have constructed a bounded inverse to $L$.
\end{proof}

\begin{lemma} \label{lem: dudtau} The derivative $du_\tau(x)/d\tau$ is strictly negative.
\end{lemma}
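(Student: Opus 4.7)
The plan is the following. First, I set $v = \left.\frac{du_\tau}{d\tau}\right|_{\tau=H}$; by the joint analyticity of $u_\tau(x)$ in $(x,\tau)$ established in the paragraphs immediately preceding the lemma, $v$ is a well-defined analytic function on $\Sigma$. The comparison theorem (Theorem \ref{thm: comparison}) combined with the strong maximum principle shows that for $\tau_1 > \tau_2$ the leaf $\Sigma_{\tau_1}$ lies strictly in the past of $\Sigma_{\tau_2}$, and since $\nu$ is future-directed timelike this translates into $u_{\tau_1}(x) < u_{\tau_2}(x)$ for every $x \in \Sigma$. Hence $\tau \mapsto u_\tau(x)$ is strictly decreasing, so $v \leq 0$ pointwise.

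Next I will differentiate the defining identity $\mathcal{H}(u_\tau) = \tau$ with respect to $\tau$ at $\tau = H$: this yields $L v = 1$, where $L = D\mathcal{H}(0) = \Delta - |\II|^2$ is the linearization of the mean curvature operator at $u = 0$ computed in the proof of Lemma \ref{lem: L inverse}. Equivalently, $\Delta v = 1 + |\II|^2 v$ on $\Sigma$. To conclude, I assume for contradiction that $v(x_0) = 0$ at some $x_0 \in \Sigma$. Since $v \leq 0$ throughout $\Sigma$, the point $x_0$ is automatically a global maximum of $v$, so $\Delta v(x_0) \leq 0$. On the other hand, the PDE evaluated at $x_0$ gives $\Delta v(x_0) = 1 + |\II|^2(x_0) \cdot 0 = 1 > 0$, a contradiction. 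Therefore $v < 0$ everywhere on $\Sigma$.

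I anticipate no serious obstacle: the argument is entirely pointwise, because the assumption $v(x_0) = 0$ together with $v \leq 0$ already forces $x_0$ to be a local maximum of $v$, so no Omori--Yau-type technology is required despite the non-compactness of $\Sigma$. For the statement at an arbitrary $\tau > 0$ (rather than just $\tau = H$), the same reasoning applies verbatim with $\Sigma_\tau$ playing the role of $\Sigma$ and $\tau$ playing the role of $H$; the positivity of the Jacobian of the (analytic) change of parametrization from normal graphs over $\Sigma_\tau$ back to normal graphs over $\Sigma$ then transports the conclusion to the function $du_\tau(x)/d\tau$ appearing in the statement.
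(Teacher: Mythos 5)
Your proof is correct, and it gets to the nonpositivity of $v = du_\tau/d\tau$ by a genuinely different route from the paper. The paper takes $v = L^{-1}(1)$, where $L^{-1}$ was constructed in Lemma~\ref{lem: L inverse} as a limit of solutions to Dirichlet problems on an exhaustion $K_i$ with zero boundary data; since the constant function $0$ is a supersolution to $Lu = 1$, the comparison principle on each $K_i$ gives $u^{K_i} \leq 0$, and the bound persists in the limit. You instead argue from the already-proved monotonicity of the foliation (Theorem~\ref{thm: comparison} plus the strong maximum principle, as in Lemma~\ref{lemma cont fol1}): $\tau_1 > \tau_2$ forces $\Sigma_{\tau_1}$ strictly into the past of $\Sigma_{\tau_2}$, hence $u_{\tau_1} < u_{\tau_2}$ pointwise because along each future-directed normal geodesic $t \mapsto x + t\nu(x)$ the surfaces are met in time order; monotonicity in $\tau$ then gives $v \leq 0$. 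This buys a cleaner argument that does not re-enter the guts of the inverse construction, at the modest cost of importing the comparison theorem. For the final upgrade from $v \leq 0$ to $v < 0$, the paper invokes the strong maximum principle for $L = \Delta - |\II|^2$, while you carry out a direct pointwise contradiction at a hypothetical interior maximum where $v(x_0) = 0$ implies $\Delta v(x_0) \leq 0$ while the equation forces $\Delta v(x_0) = 1$; these are essentially the same reasoning, and your pointwise version has the advantage of not needing any completeness or Omori--Yau machinery precisely because the interior maximum is handed to you for free. Your last paragraph about transporting the conclusion to $\tau \neq H$ is a reasonable remark, though not strictly needed: the analyticity argument only requires nonvanishing of $du_\tau/d\tau$ at $\tau = H$ on each fixed leaf, and continuity then handles nearby $\tau$.
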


\begin{proof}
Differentiate the equation $\mathcal{H}(u_\tau) = \tau$ to find that $L(du_\tau(x)/d\tau) = 1$. The function 0 is a supersolution to this equation, so the solution $u$ produced in the proof of the previous lemma for $f =1$ is nonpositive, and by the strong maximum principle it is strictly negative. Note that confusingly, the CMC time $\tau$ is increasing to the past of $\Sigma$.
\end{proof}

Finally, we remark that if we think of $\tau$ as a function on $\mathcal{D}$, then it is the same as the CMC time defined in \cite{MR2904912}. Our proof clearly implies that the CMC time is analytic.

\section{Application in dimension 2 + 1} \label{sec: application}

We will now focus hereafter on the case $n=2$, that is, on surfaces in Minkowski 3-space. In this section we will prove Theorem \ref{cor graph metric} and obtain Corollaries \ref{thm:inverse min lag} and \ref{cor first fundamental form} as a consequence.

\subsection{Hyperbolic surfaces and correspondence by the normal flow}
We now recall the fundamental correspondence between hyperbolic surfaces and CMC surfaces in $\R^{2,1}$, given by the normal evolution.

Let $\sigma:S\to\R^{2,1}$ be a spacelike immersion and $G_\sigma:S\to\Hyp^2\subset\R^{2,1}$ its Gauss map. For $t\in\R$, us denote by 
$\sigma_t:S\to\R^{2,1}$
 the {normal flow} of $\sigma$ at time $t$, namely
$\sigma_t=\sigma+tG_\sigma$.

\begin{lemma} \label{lemma equidistant embedding data}
Given a spacelike immersion $\sigma:S\to\R^{2,1}$, let $\I$ be its the first fundamental form  and $B$ its shape operator. Then:
\begin{itemize}
\item The first fundamental form of $\sigma_t$ is 
\begin{equation} \label{eq fff}
\I_t=\I((\mathbbm 1+tB)\cdot,(\mathbbm 1+tB)\cdot)~.
\end{equation}
\item The shape operator of $\sigma_t$ is 
\begin{equation} \label{eq shape}
B_t=(\mathbbm 1+tB)^{-1}B~.
\end{equation}
\end{itemize}
\end{lemma}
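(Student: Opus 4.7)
The plan is to prove both identities by straightforward differentiation, using that the Gauss map $G_\sigma$ is literally the future unit normal $\nu$ based at the origin (so that $\sigma_t = \sigma + t\nu$), and that the shape operator is by definition $B = d\nu$ viewed as an endomorphism of $TS$ via $d\sigma$.

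First I would observe that since $\langle \nu,\nu\rangle = -1$ is constant, we have $\langle d\nu(v),\nu\rangle = 0$ for every $v \in TS$, so $d\nu(v)$ is tangent to $\sigma(S)$. Identifying $T_{\sigma(p)}\sigma(S)$ with $T_pS$ via $d\sigma$, this is precisely the definition of $B$, i.e.\ $d\nu(v) = d\sigma(Bv)$. Differentiating $\sigma_t = \sigma + t\nu$ along $v$ then yields
\[
d\sigma_t(v) \;=\; d\sigma(v) + t\, d\nu(v) \;=\; d\sigma\bigl((\mathbbm 1 + tB)v\bigr).
\]
Taking the Minkowski inner product of $d\sigma_t(v)$ and $d\sigma_t(w)$ immediately gives \eqref{eq fff}.

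Next I would check that $\nu$ itself is still a future unit normal to $\sigma_t$. It is automatically future unit timelike, and the preceding computation shows that its Minkowski inner product with every tangent vector $d\sigma_t(v)$ vanishes, because $d\sigma_t(v)$ lies in the image of $d\sigma$ and $\nu$ is normal to $\sigma(S)$. Consequently the shape operator $B_t$ of $\sigma_t$ is characterized by $d\nu(v) = d\sigma_t(B_t v)$. Substituting $d\nu(v) = d\sigma(Bv)$ and $d\sigma_t(B_t v) = d\sigma\bigl((\mathbbm 1 + tB)B_t v\bigr)$ and using that $d\sigma$ is injective gives $(\mathbbm 1 + tB)B_t = B$, from which \eqref{eq shape} follows as soon as $\mathbbm 1 + tB$ is invertible.

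The only subtlety is therefore the invertibility of $\mathbbm 1 + tB$: for small $t$ this is automatic, while for the values of $t$ of interest in the paper (where $\sigma$ is a CMC-$1/2$ or hyperbolic surface and one flows by $t = \pm 1$) it will follow from the spectral bounds on $B$ coming from the CMC or Gauss equation; I would simply note that the two formulas hold under the standing hypothesis that $\sigma_t$ is itself a spacelike immersion, which is precisely the invertibility condition. No other step is more than bookkeeping.
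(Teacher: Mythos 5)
Your proof is correct. The derivation of the first fundamental form is identical to the paper's, both resting on $d\sigma_t = d\sigma\circ(\mathbbm 1 + tB)$. For the shape operator, however, you take a slightly different route than the paper: you observe that $\nu$ remains the future unit normal of $\sigma_t$ (a fact the paper records separately as Remark \ref{rmk:gauss map equidistant}, as a \emph{consequence} of the proof rather than a step in it) and read off $B_t$ directly from the characterization $d\nu = d\sigma_t\circ B_t$. The paper instead uses the identity $\II_t = \tfrac{1}{2}\left.\tfrac{d}{ds}\right|_{s=t}\I_s$, differentiating the one-parameter family of first fundamental forms in \eqref{eq fff} to get $\II_t = \I\bigl(B\cdot,(\mathbbm 1+tB)\cdot\bigr)$, and then solves $\II_t = \I_t(B_t\cdot,\cdot)$ for $B_t$. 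Your argument is a bit more direct and avoids the extra differentiation and the need to invoke self-adjointness of $B$ when solving for $B_t$, whereas the paper's version emphasizes the derivative relation between $\I_t$ and $\II_t$, which is a useful identity in its own right. Your closing remark about the invertibility of $\mathbbm 1 + tB$ being exactly the condition that $\sigma_t$ is an immersion is also accurate, since $d\sigma_t = d\sigma\circ(\mathbbm 1 + tB)$ with $d\sigma$ injective.
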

\begin{proof}
Equation \eqref{eq fff} follows easily from the definition of $\sigma_t$, from which one obtains
\begin{equation} \label{eq: differential normal evolution}
d \sigma_t= d\sigma+t dG_\sigma=d\sigma\circ(\mathbbm 1+tB)~,
\end{equation}
and therefore the desired formula. Equation \eqref{eq shape} then follows 
by differentiating Equation \eqref{eq fff} and the fact that the second fundamental form of $\sigma_t$ can be computed by the formula
$\II_t=\frac{1}{2}\left.\frac{d}{ds}\right|_{s=t}\I_s$.
\end{proof}

\begin{remark}\label{rmk:gauss map equidistant}
As an easy consequence of the proof, we see that the Gauss map of the immersion $\sigma_t$ coincides with $G_\sigma$. In fact, for every point $p$ of $S$, $G_\sigma(p)$ is orthogonal to  $d\sigma_t(v)$ for all $v\in T_p S$ by Equation \eqref{eq: differential normal evolution}.
\end{remark}

We say a spacelike surface in $\R^{2,1}$ is \emph{hyperbolic} if its first fundamental form is a hyperbolic metric. Equivalently (by Gauss' equation), if its shape operator $B$ identically satisfies $\det B=1$.

\begin{prop} \label{prop normal flow}
Given any entire hyperbolic surface $\Sigma$ in $\R^{2,1}$, the surface $\Sigma_+=\sigma_1(\Sigma)$
is an entire convex CMC-\nicefrac{1}{2} surface with $\mathcal D(\Sigma_+)=\mathcal D(\Sigma)$.
\end{prop}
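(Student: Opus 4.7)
The strategy is to recognize $\Sigma_+$ as a level set of the Lorentzian distance from $\Sigma$ and then apply Proposition \ref{prop:T level sets} to get all the topological and causal properties at once, reducing the problem to a purely algebraic computation of the shape operator via Lemma \ref{lemma equidistant embedding data}.

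I would first identify $\Sigma_+$ with $T^{-1}(1)$, where $T(q) := d(\Sigma, q)$. For every $p \in \Sigma$, the vector $\sigma_1(p) - p = \nu(p)$ is a future-directed unit timelike vector, so $d(p, \sigma_1(p)) = 1$. To argue that this realizes the supremum in the definition of $T$, one computes along an intrinsic geodesic $p'(t)$ in $\Sigma$ starting at $p$ with velocity $v$ that the squared-distance function $F(p') = -\langle \sigma_1(p) - p', \sigma_1(p) - p'\rangle$ satisfies $F'(0)=0$ (since $\sigma_1(p)-p\perp T_p\Sigma$), and, via the Gauss formula for the extrinsic acceleration of a geodesic in $\Sigma$, $F''(0) = -2\,\I(v,v) - 2\,\II(v,v)$. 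Convexity of $\Sigma$ is needed here, and it follows from $\det B = 1$ being continuous and nonzero on the connected surface, so that both eigenvalues of $B$ have the same sign, which we can take positive by choosing the time orientation appropriately; this ensures $\II$ is positive definite, so $F''(0) < 0$. Hence $p$ is a strict local maximum of $F$ on $\Sigma$, and by convexity it is the unique critical point and a global maximum. This gives $T(\sigma_1(p)) = 1$; conversely, every $q \in T^{-1}(1)$ arises as $\sigma_1(p)$ for the unique nearest past point $p$.

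With this identification, Proposition \ref{prop:T level sets} applies to $\Sigma$, whose hypotheses (entire, achronal, convex, and having a spacelike support plane — automatic since each tangent plane of a smooth spacelike convex surface is a spacelike support plane) are all immediate. The proposition then yields that $\Sigma_+ = T^{-1}(1)$ is a convex entire spacelike hypersurface with $\mathcal{D}(\Sigma_+) = \mathcal{D}(\Sigma)$. Finally, the mean curvature is computed directly from Lemma \ref{lemma equidistant embedding data} at $t = 1$: the shape operator is $B_+ = (\Id + B)^{-1} B$, well-defined since $B > 0$ makes $\Id + B$ invertible. If $\lambda_1, \lambda_2$ denote the eigenvalues of $B$, then $\lambda_1 \lambda_2 = \det B = 1$, and the eigenvalues of $B_+$ are $\lambda_i/(1+\lambda_i)$, whose sum simplifies to $[\lambda_1 + \lambda_2 + 2\lambda_1\lambda_2]/[(1+\lambda_1)(1+\lambda_2)] = 1$. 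Hence $H_+ = \mathrm{tr}(B_+)/2 = 1/2$. The main subtlety is the distance-maximization step needed to identify $\Sigma_+$ with the cosmological-type level set of $T$, after which all conclusions follow from the tools already developed in the preliminaries.
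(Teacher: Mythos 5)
Your proof is correct and follows essentially the same route as the paper: identify $\Sigma_+$ with the level set $T^{-1}(1)$ of the Lorentzian distance to $\Sigma$, invoke Proposition \ref{prop:T level sets} for entireness and equality of domains of dependence, and compute $H_+ = 1/2$ from the equidistant shape operator $B_+ = (\Id + tB)^{-1}B$ using $\det B = 1$; the paper computes the trace via $\tr/\det$ manipulations rather than eigenvalues, a cosmetic difference. One small refinement: the Hessian computation $F''(0) = -2\I(v,v)-2\II(v,v)$ only gives a strict local maximum, and the passage to a global maximum is cleaner if argued directly from convexity --- writing any $p'\in\Sigma$ as $p + w + s\nu$ with $w\in T_p\Sigma$, convexity gives $s\ge 0$ and achronality gives $|w|\ge s$, whence $\langle q-p',q-p'\rangle = -(1-s)^2+|w|^2 \ge -1+2s \ge -1$ with equality iff $p'=p$, bypassing the second-order analysis entirely.
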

\begin{proof}
By convexity of $\Sigma$, $\Sigma_+$ coincides with the level set $T^{-1}(1)$ where $T$ is the function $T(p)=d(\Sigma,p)$. By Proposition \ref{prop:T level sets} applied to $\mathcal H=\Sigma$, it follows that $\Sigma_+$ is entire if $\Sigma$ is entire, and that the two surfaces have the same domain of dependence. We only need to show that 
$\Sigma_+$ has constant mean curvature $1/2$ if $\Sigma$ is a hyperbolic surface. By Equation \eqref{eq shape} the mean curvature of $\Sigma_{+}$ is:
$$\tr B_1=\tr((\mathbbm 1+B)^{-1}B)=\frac{\tr(B^{-1}(\mathbbm 1+B))}{\det(B^{-1}(\mathbbm 1+B))}=\frac{2+\tr B^{-1}}{1+\tr B^{-1}+(\det B^{-1})}=1$$
if $\Sigma$ is hyperbolic, for $\det B=1$ by Gauss' equation.
\end{proof}

\begin{remark} \label{remark trough}

Given any spacelike  CMC-\nicefrac{1}{2} immersion $\sigma$, one can see that    
$\sigma_{-1}$
 is the immersion of a hyperbolic surface under the additional hypothesis that $\sigma$ has positive definite second fundamental form. This hypothesis is  necessary to show that  $\sigma_{-1}$ is an immersion. For example, the past normal flow of the trough
$\Sigma=\{x_1^2-x_3^2=-1\}$ at time -1 collapses each hyperbola $\Sigma\cap \{x_1=t_0\}$ to the point $(t_0,0,0)$, and thus the image of  $\sigma_{-1}$ is not an immersed surface (actually, it is the line $\{x_2=x_3=0\}$).
\end{remark}

The trough is indeed the unique example of entire CMC in $\R^{2,1}$ whose second fundamental form is not positive definite, by Theorem \ref{thm:splitting theorem}. Nevertheless, the construction of Proposition \ref{prop normal flow} \emph{a priori} cannot be reversed straightforwardly since it is not immediately clear that the surface obtained by normal flow in the past is still entire. But this fact is true (once the exception of the trough is ruled out), as a consequence of Theorem \ref{thm: uniqueness} and the main theorem of \cite{bon_smillie_seppi}:

\begin{cor} \label{cor past flow uniqueness}
Given any entire CMC-\nicefrac{1}{2} surface $\Sigma$, either $\Sigma$ is a trough or it is the surface obtained by the time 1 normal flow of an entire hyperbolic surface.
\end{cor}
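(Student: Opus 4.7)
The plan is to reduce the claim to an application of the uniqueness part of Theorem \ref{thm: uniqueness} combined with the main existence theorem of \cite{bon_smillie_seppi}. Let $\Sigma$ be an entire CMC-\nicefrac{1}{2} surface which is not a trough, and let $\mathcal{D} = \mathcal{D}(\Sigma)$. By Proposition \ref{prop dod of CMC is regular}, $\mathcal{D}$ is a regular domain, so the first task is to verify that $\mathcal{D}$ is actually the intersection of at least three pairwise non-parallel future null half-spaces (so that the hypothesis of the main result of \cite{bon_smillie_seppi} is satisfied).

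For this, I would argue by contradiction: if $\mathcal{D}$ were a wedge, then the trough $T$ associated to that wedge (as described in the introduction and Figure \ref{fig:trough}) is an entire CMC-\nicefrac{1}{2} surface with $\mathcal{D}(T) = \mathcal{D}$. By the uniqueness statement in Theorem \ref{thm: uniqueness}, $\Sigma = T$, contradicting our assumption that $\Sigma$ is not a trough. Hence $\mathcal{D}$ is a regular domain determined by at least three non-parallel null support planes.

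The main theorem of \cite{bon_smillie_seppi} then produces a unique entire hyperbolic surface $\Sigma_-$ with $\mathcal{D}(\Sigma_-) = \mathcal{D}$. Applying Proposition \ref{prop normal flow} to $\Sigma_-$, the time-one normal flow $(\Sigma_-)_+$ is an entire CMC-\nicefrac{1}{2} surface whose domain of dependence is again $\mathcal{D}$. Invoking the uniqueness statement in Theorem \ref{thm: uniqueness} one more time, we conclude $(\Sigma_-)_+ = \Sigma$, which is exactly the assertion.

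The main (and really the only) conceptual step is the exclusion of the wedge case; this is straightforward because the uniqueness of the CMC-\nicefrac{1}{2} leaf in any given regular domain forces $\Sigma$ to coincide with the trough whenever $\mathcal{D}(\Sigma)$ is a wedge. Everything else is simply a concatenation of black-box results: Proposition \ref{prop dod of CMC is regular} gives the regular-domain structure, \cite{bon_smillie_seppi} supplies the hyperbolic predecessor, Proposition \ref{prop normal flow} says that normal flow preserves the domain of dependence and produces a CMC-\nicefrac{1}{2} surface, and Theorem \ref{thm: uniqueness} closes the loop by identifying $(\Sigma_-)_+$ with $\Sigma$. No direct analysis of the past normal flow of $\Sigma$ itself is required, which sidesteps the issue raised in Remark \ref{remark trough}.
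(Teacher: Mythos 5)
Your proof is correct and follows essentially the same route as the paper: identify $\mathcal{D}(\Sigma)$, rule out the wedge case, invoke \cite[Theorem A]{bon_smillie_seppi} for the hyperbolic predecessor $\Sigma_-$, push forward by Proposition \ref{prop normal flow}, and close with uniqueness from Theorem \ref{thm: uniqueness}. The only difference is cosmetic: where the paper simply asserts ``since $\Sigma$ is not a trough, its domain of dependence is not a wedge'' with a pointer to the figure, you spell out the contrapositive via the uniqueness theorem, which is exactly the intended justification.
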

\begin{proof}
Suppose $\Sigma$ is not a trough, and let $\mathcal D=\mathcal D(\Sigma)$ be the domain of dependence of $\Sigma$. Since $\Sigma$ is not a trough, its domain of dependence is not a wedge. (See Figure \ref{fig:trough}.) Hence by \cite[Theorem A]{bon_smillie_seppi}, there exists an entire hyperbolic surface $\Sigma_-$ whose domain of dependence is $\mathcal D$. Then by Proposition \ref{prop normal flow}, the surface obtained by the time 1 normal flow from $\Sigma_-$ is an entire CMC-\nicefrac{1}{2} surface with domain of dependence $\mathcal D$, which must coincide with $\Sigma$ by the uniqueness of Theorem \ref{thm: uniqueness}.
\end{proof}

\subsection{Minimal Lagrangian maps}

The results of this section concern the realization of maps between surfaces as Gauss maps of spacelike surfaces in $\R^{2,1}$. Let us introduce the following definition.

\begin{defi}
Given two simply connected Riemannian surfaces $(S,g)$ and $(S',h')$, for $h'$ a hyperbolic metric, a smooth map $f:(S,g)\to (S',h')$ is \emph{realizable} in $\R^{2,1}$ if there exists an isometric immersion $\sigma:(S,g)\to\R^{2,1}$ and a local isometry $d:(S',h')\to\Hyp^2$ such that $d\circ f= G_\sigma$ where $G_\sigma:S\to\Hyp^2$ is the Gauss map of $\sigma$. Moreover, we say that $f$ is \emph{properly realizable} if $\sigma$ is proper.
\end{defi}

 If $f$ is realizable by an immersion $\sigma$ and local isometry $d$, then for any $\eta \in SO(2,1)$, so too is it realizable by $\eta \circ \sigma$ and $\eta \circ d$. Since any two local isometries differ by post-composition with some $\eta \in SO(2,1)$ the definition is impervious to the choice of  local isometry. 
By Proposition \ref{prop:prop embedded}, the map $f$ is properly realizable if and only if it is realizable by an embedding $\sigma$ whose image is an entire spacelike surface.

When $g$ is a hyperbolic metric, realizable maps are easily characterized in terms of the minimal Lagrangian condition.

\begin{defi}
Given two hyperbolic surfaces $(S,h)$ and $(S',h')$, a local diffeomorphism $f:(S,h)\to (S',h')$ is minimal Lagrangian if its graph is both a Lagrangian and a minimal surface in $(S\times S',h\oplus h')$.
\end{defi}

\begin{remark}Here the Lagrangian condition is meant with respect to the symplectic form $\pi^*\mathrm{dA}_h-(\pi')^*\mathrm{dA}_{h'}$, where $\mathrm{dA}_h$ is the area form of $h$ and $\pi,\pi'$ denote the projections to the first and second factor. Hence this condition is equivalent to $f$ being area-preserving.
\end{remark}

We shall apply the following characterization of minimal Lagrangian local diffeomorphisms:

\begin{lemma}[{\cite{labourieCP},\cite[Proposition 1.2.3]{jeremythesis}}] \label{lemma min lag codazzi}
Let $f:(S,h)\to (S',h)$ be a local diffeomorphism. Then $f$ is minimal Lagrangian if and only if the unique positive definite $h$-self-adjoint endomorphism $B\in\Gamma(\mathrm{End}(TS))$ such that $f^*h'=h(B\cdot,B\cdot)$ satisfies $\det B=1$ and the Codazzi condition $d^{\nabla_h}\!B=0$, where $\nabla_h$ is the Levi-Civita connection of $h$.
\end{lemma}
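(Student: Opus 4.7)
The plan is to translate the two defining properties of minimal Lagrangian---area-preserving and minimality of the graph---separately into statements about $B$, and check they correspond respectively to $\det B = 1$ and $d^{\nabla_h}B = 0$.

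First, I would establish the setup: since $f^*h'$ is a Riemannian metric on $S$, there is a unique $h$-self-adjoint positive definite endomorphism $A$ with $f^*h' = h(A\cdot,\cdot)$, and $B := \sqrt{A}$ is the unique $h$-self-adjoint positive definite $B$ satisfying $f^*h' = h(B\cdot,B\cdot)$. Working in an $h$-orthonormal frame $\{e_1,e_2\}$ diagonalizing $B$ with eigenvalues $\lambda,\mu>0$, the vectors $df(e_i)$ are $h'$-orthogonal with norms $\lambda,\mu$, so $f^*dA_{h'} = \pm \lambda\mu\,dA_h = \pm\det B\,dA_h$. Since the graph is Lagrangian for $\omega = \pi^*dA_h - (\pi')^*dA_{h'}$ iff $f^*dA_{h'} = dA_h$, and the $-$ sign is excluded by positivity of $B$ once we orient $S'$ so that $f$ is orientation-preserving, the Lagrangian condition is equivalent to $\det B = 1$.

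Granting the Lagrangian condition, I would then compute the mean curvature vector of the graph in $(S\times S', h\oplus h')$. Writing $\lambda = e^{t}$, $\mu = e^{-t}$ and letting $\epsilon_i = df(e_i)/|df(e_i)|_{h'}$ be the corresponding $h'$-orthonormal frame, the tangent space to the graph is spanned by $E_i = (e_i, df(e_i))$ with unit rescaling $\tilde E_i = E_i/\sqrt{1+\lambda_i^2}$, and an explicit orthonormal normal frame is given by $N_i = (-\lambda_i e_i, \epsilon_i)/\sqrt{1+\lambda_i^2}$. Computing the shape operators $\II_{N_j}(\tilde E_i, \tilde E_k)$ using the Levi-Civita connections of $h$ and $h'$ expresses them in terms of the entries of $\nabla^h B$ and the connection 1-forms of the two hyperbolic metrics. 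Using both Gauss equations $K_h = K_{h'} = -1$ together with the Lagrangian constraint $\det B = 1$, the algebraic pieces cancel and the trace (mean curvature) becomes, up to a nonvanishing factor, the off-diagonal part of $(\nabla^h_X B)Y - (\nabla^h_Y B)X$. Hence the mean curvature vanishes iff $d^{\nabla_h}B = 0$.

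The main obstacle is precisely this mean curvature computation, which is the heart of the lemma and is the content of the cited \cite{labourieCP} and \cite[Prop.\ 1.2.3]{jeremythesis}. A cleaner shortcut for the ``if'' direction that avoids writing out the full trace is available: if $\det B = 1$ and $d^{\nabla_h}B = 0$, then the Gauss equation $K_h = -\det B = -1$ is automatically satisfied, so by the fundamental theorem of surfaces there exists an isometric immersion $\sigma:(S,h)\to\R^{2,1}$ with shape operator $B$; by Remark \ref{rmk:gauss map equidistant} and the calculation $G_\sigma^* g_{\Hyp^2} = h(B\cdot,B\cdot) = f^*h'$, the map $f$ agrees up to a local isometry with the Gauss map of a hyperbolic surface, and the classical identification of Gauss maps of hyperbolic surfaces with minimal Lagrangian maps (recalled in the introduction) yields minimality of the graph. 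The converse direction is then handled by reversing this identification.
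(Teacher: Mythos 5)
The paper does not give its own proof of this lemma; it cites \cite{labourieCP} and \cite[Proposition 1.2.3]{jeremythesis} and moves on. So the honest part of your proposal---that the mean-curvature computation is the heart of the matter and lives in those references---matches what the paper does. Your translation of the Lagrangian (area-preserving) condition into $\det B = 1$ via diagonalizing $B$ in an orthonormal frame is correct and standard, and your outlined strategy for the mean-curvature computation is the right one, though you do not carry it out.

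The problem is the ``cleaner shortcut'' for the \emph{if} direction. It is circular in the logical structure of this paper. You construct from $(h,B)$ an isometric immersion $\sigma$ with shape operator $B$, identify $f$ with $G_\sigma$ up to local isometry, and then invoke ``the classical identification of Gauss maps of hyperbolic surfaces with minimal Lagrangian maps'' to conclude minimality. But that identification \emph{is} the content of Lemma \ref{lemma min lag codazzi}: the shape operator of any hyperbolic surface automatically satisfies $\det B = 1$ and Codazzi, so the assertion that $G_\sigma$ is minimal Lagrangian is exactly the ``if'' direction of the lemma applied to $G_\sigma$. Concretely, within the paper the dependence runs from Lemma \ref{lemma min lag codazzi} to Proposition \ref{prop realizable iff min lag} (the proof of the latter explicitly invokes the former), not the other way around, so you cannot use Proposition \ref{prop realizable iff min lag} or the introductory remark encapsulating it without reversing the arrows. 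Your final sentence, ``the converse direction is then handled by reversing this identification,'' inherits the same issue: reversing a circular argument does not produce a proof of the converse. To make this rigorous you would need to either carry out the mean-curvature computation you sketched in the second paragraph, or give an independent proof that the graph of the Gauss map of a hyperbolic surface in $\R^{2,1}$ is a minimal surface in $\Sigma\times\Hyp^2$ (which is a genuine theorem, not a tautology, and is precisely what the cited references prove).
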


The following characterization of realizable maps between hyperbolic surfaces is well-known. We provide a short proof for convenience of the reader.

\begin{prop} \label{prop realizable iff min lag}
A smooth map $f:(S,h)\to (S',h')$  between simply connected hyperbolic surface is realizable in $\R^{2,1}$ if and only if it is a minimal Lagrangian local diffeomorphism.
\end{prop}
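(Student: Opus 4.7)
I propose to translate both directions into the fundamental theorem of surface theory in $\R^{2,1}$, using Lemma \ref{lemma min lag codazzi} as a dictionary between the minimal Lagrangian condition on $f$ and the Gauss--Codazzi data of a hyperbolic spacelike immersion.

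For the direction \emph{realizable $\Rightarrow$ minimal Lagrangian}, suppose $f$ is realized by an isometric immersion $\sigma:(S,h)\to\R^{2,1}$ together with a local isometry $d:(S',h')\to\Hyp^2$ such that $d\circ f=G_\sigma$. Let $B$ be the shape operator of $\sigma$. Hyperbolicity of $h$ combined with the Gauss equation $K_h=-\det B$ for spacelike surfaces forces $\det B=1$, and the Codazzi equation gives $d^{\nabla_h}B=0$. Under the standard identification, $dG_\sigma=B$, hence $G_\sigma^\ast g_{\Hyp^2}=h(B\cdot,B\cdot)$. Since $d$ is a local isometry and $d\circ f=G_\sigma$, this yields $f^\ast h'=h(B\cdot,B\cdot)$; in particular $f$ is a local diffeomorphism because $B$ is invertible. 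Both eigenvalues of $B$ then have the same sign ($\det B=1>0$), so by replacing $B$ with $-B$ if necessary (which preserves $\det B$, $d^{\nabla_h}B=0$, and $h(B\cdot,B\cdot)$) I may assume $B$ is positive definite, after which Lemma \ref{lemma min lag codazzi} concludes that $f$ is minimal Lagrangian.

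For the converse, assume $f$ is minimal Lagrangian. Lemma \ref{lemma min lag codazzi} supplies a positive definite $h$-self-adjoint $B$ with $f^\ast h'=h(B\cdot,B\cdot)$, $\det B=1$, and $d^{\nabla_h}B=0$. Since $h$ has curvature $-1$, the pair $(h,B)$ automatically satisfies the Gauss equation $K_h=-\det B$ and the Codazzi equation for spacelike immersions in $\R^{2,1}$. The fundamental theorem of surfaces in Minkowski space, applied to the simply connected $S$, then produces an isometric immersion $\sigma:(S,h)\to\R^{2,1}$ whose shape operator is $B$. Its Gauss map satisfies $G_\sigma^\ast g_{\Hyp^2}=h(B\cdot,B\cdot)=f^\ast h'$, and is a local diffeomorphism since $B$ is invertible.

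It remains to match $G_\sigma$ with a composition $d\circ f$ for some local isometry $d$ defined on all of $(S',h')$. I would fix a developing map $d_0:(S',h')\to\Hyp^2$, which exists because $(S',h')$ is a simply connected Riemannian surface of constant curvature $-1$. Both $d_0\circ f$ and $G_\sigma$ are then local isometries from the simply connected Riemannian surface $(S,f^\ast h')$ into $\Hyp^2$. By the standard analytic-continuation argument for developing maps on simply connected constant-curvature surfaces, two such local isometries differ by a unique postcomposition with an isometry $\eta\in\Isom(\Hyp^2)$, so $d_0\circ f=\eta\circ G_\sigma$; setting $d=\eta^{-1}\circ d_0$ gives a local isometry $d:(S',h')\to\Hyp^2$ with $d\circ f=G_\sigma$, proving that $f$ is realizable. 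The main obstacle is precisely this final matching step: $G_\sigma$ is intrinsically defined on $S$, whereas we need the local isometry to live on $S'$, and the fact that both candidate maps factor through the same hyperbolic Riemannian structure $(S,f^\ast h')$ on the simply connected surface $S$ is what lets developing-map rigidity pin down $\eta$ globally.
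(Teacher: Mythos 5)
Your proof is correct and follows essentially the same route as the paper's: both directions translate through Lemma \ref{lemma min lag codazzi} and the fundamental theorem of spacelike surfaces, and the matching step at the end is the same rigidity argument for local isometries into $\Hyp^2$. Your extra observation that one may need to replace the shape operator $B$ by $-B$ to obtain the \emph{positive definite} operator demanded by Lemma \ref{lemma min lag codazzi} is a small but genuine clarification the paper leaves implicit.
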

\begin{proof}
If $f$ is realized by an immersion $\sigma$ so that $G_\sigma=d\circ f$, then
it is a local diffeomorphism since the differential of the Gauss map, which coincides with $B$ under the correct identifications, satisfies $\det B=1$ and is thus non-singular.
Moreover $f^*h'=G_\sigma^*g_{\Hyp^2}=h(B\cdot,B\cdot)$ where $h$ coincides with the first fundamental form of $\sigma$ and $B$ is its shape operator. By the Gauss-Codazzi equations, $\det B=1$ and  $d^{\nabla_h}\!B=0$, hence by Lemma \ref{lemma min lag codazzi} $f$ is minimal Lagrangian.

Conversely, suppose $f$ is a minimal Lagrangian local diffeomorphism and let $B$ as in Lemma \ref{lemma min lag codazzi}. Then the pair $(h,B)$ satisfies the Gauss-Codazzi equations. By the fundamental theorem of immersed surfaces in $\R^{2,1}$, there exists an isometric immersion $\sigma:(S,h)\to\R^{2,1}$ with shape operator $B$. Now if $d:(S',h')\to\Hyp^2$ is any local isometry, then $(d\circ f)^*g_{\Hyp^2}=f^*h'=h(B\cdot,B\cdot)=G_\sigma^*g_{\Hyp^2}$. Since $S$ is connected, $d\circ f$ and $G_\sigma$ differ by post-composition with an isometry $\eta$ of $\Hyp^2$. Replacing $d$ with $\eta\circ d$  concludes the proof.
\end{proof}

\subsection{Proofs of the results}

The main theorem of this section characterizes \emph{properly} realizable minimal Lagrangian maps. Before that, a little remark to clarify the statement.

\begin{remark} \label{remark d diffeo} We observe that if a local diffeomorphism $f$ is properly realized in $\R^{2,1}$, then it is a diffeomorphism onto its image. In fact the Gauss map $G_\sigma$, which by definition equals $d\circ f$, is a diffeomorphism onto its image in $\Hyp^2$, hence $f$ is a diffeomorphism onto its image and $d$ is injective on the image of $f$. By re-defining $S'$ is thus harmless to assume that $f$ is a  diffeomorphism, which we will always do in what follows.
\end{remark}

\begin{reptheorem}{cor graph metric}
Let $f:(S,h)\to (S',h')$ be a diffeomorphism between simply connected hyperbolic surfaces. Then $f$ is properly realizable in $\R^{2,1}$ if and only if the graph of $f$ is a complete minimal Lagrangian surface in $(S\times S', h\oplus h')$. In this case, both $(S,h)$ and $(S',h')$ are isometric to straight convex domains in $\Hyp^2$.
 \end{reptheorem}
 
Recall that a \emph{straight convex domain} in $\Hyp^2$ is the interior of the convex hull of a subset of $\partial_\infty\Hyp^2$ consisting of at least 3 points.

 \begin{proof}[Proof of Theorem \ref{cor graph metric}]
{We showed in Proposition \ref{prop realizable iff min lag} that $f$ is realizable if and only if it is minimal Lagrangian. In light of Proposition \ref{prop normal flow} and Corollary \ref{cor past flow uniqueness}, $f$ is properly realizable by an immersion $\sigma$ if and only if the equidistant immersion $\sigma_1$ is entire, which by Proposition \ref{lemma complete implies entire}  and the Cheng-Yau theorem is equivalent to the completeness of the CMC first fundamental form.
Observe that when applying Corollary \ref{cor past flow uniqueness}, we used that the equidistant CMC surface is not a trough, for otherwise it could not be obtained (even locally) as the equidistant surface from a hyperbolic surface (see Remark \ref{remark trough}). }

{Now, by \eqref{eq fff} the CMC metric is identified to $\I+2\II+\III$, where $\I,\II$ and $\III$ are the first, second and third fundamental form of $\sigma$. We claim that this metric is bi-Lipschitz to the induced metric on the graph of $f$ in $(S\times S', h\oplus h')$. In fact, the latter equals $h+f^*h'$, which is to say $\I+\III$, when pulled-back by the obvious embedding $(\mathrm{id},f)$ of $S$ in the product. Young's inequality implies that $2\II\leq \I+\III$, hence $\I+2\II+\III\leq 2(\I+\III)$, whereas $\I+\III\leq \I+2\II+\III$ since by convexity $\II$ is positive definite. This shows that the CMC induced metric is complete if and only if $h+f^*h'$ is complete, which is the first part of the theorem.}

The fact that $(S',h')$ is isometric to a straight convex domain follows from \cite[Theorem E]{bon_smillie_seppi}, which shows that the image of the Gauss map of an entire hyperbolic surface (or more generally, of an entire spacelike surface of   curvature bounded above and below by negative constants) is isometric to a straight convex domain. Since $f$ is a diffeomorphism by hypothesis and $d$ is a diffeomorphism onto its image (Remark \ref{remark d diffeo}), we conclude that $d$ provides an isometry between $(S',h')$ and a straight convex domain. Replacing $f$ with $f^{-1}$, we then obtain that $(S,h)$ is isometric to a straight convex domain, since $h'+(f^{-1})^*h=(f^{-1})^*(h+f^*h')$ is complete if and only if $h+f^*h'$ is complete.
 \end{proof}
 
 \begin{remark}
Alternatively, the second statement of Theorem \ref{cor graph metric} can be proved by applying \cite[Theorem 4.8]{choitreibergs} instead of \cite[Theorem E]{bon_smillie_seppi}. Namely the statement that the image of the Gauss map of any entire CMC hypersurface $\Sigma$ in $\R^{n,1}$ is either contained in a totally geodesic subspace of $\Hyp^n$, or the (non-empty) interior of convex hull of a subset of the boundary of $\Hyp^n$. When $n=2$, this means that either $\Sigma$ is a trough, which is not possible in our setting, or the image of the Gauss map of $\Sigma$ is a straight convex domain. Since the Gauss maps of $\sigma$ and $\sigma_1$ coincide (Remark \ref{rmk:gauss map equidistant}), one infers again that the image of $G_\sigma$ is a straight convex domain and then concludes the proof analogously.
\end{remark}

\begin{remark} The CMC-\nicefrac{1}{2} metric $\I + 2\II + \III$ is conformal to the induced metric $\I + \III$ on the graph of $f$. Since the graph of $f$ is minimal, the two projections to $S$ and $S'$ are each harmonic, with opposite Hopf differential. Since the property of being harmonic is conformally invariant in the domain, we see that the Gauss map of the CMC surface and the projection to the corresponding hyperbolic surface are harmonic maps from the CMC surface, with opposite Hopf differential. Thus, the CMC surface realizes the well-known decomposition of minimal Lagrangian maps in terms of a pair of harmonic maps.
\end{remark}

The last argument of the proof of Theorem \ref{cor graph metric} shows:

\begin{repcor}{thm:inverse min lag}
Let $f:(S,h)\to (S',h')$ be a minimal Lagrangian diffeomorphism between simply connected hyperbolic surfaces. Then $f$ is properly realizable in $\R^{2,1}$ if and only if $f^{-1}$ is properly realizable in $\R^{2,1}$.
\end{repcor}

We conclude with the following corollary which characterizes the first fundamental forms of entire hyperbolic surfaces.

\begin{repcor}{cor first fundamental form}
Any entire hyperbolic surface in $\R^{2,1}$ is intrinsically isometric to a straight convex domain. Conversely any straight convex domain in $\Hyp^2$ can be isometrically embedded  in $\R^{2,1}$ with image an entire surface.
\end{repcor}
\begin{proof}
  The first statement is contained in Theorem \ref{cor graph metric}.
To prove the second statement, by \cite[Theorem A, Theorem E]{bon_smillie_seppi}, given any straight convex domain $\Omega\subseteq\Hyp^2$, there exists an entire hyperbolic surface $\Sigma$ in $\R^{2,1}$ having $\Omega$ as image of the Gauss map. (In fact, there is one such surface for every choice of a lower-semicontinuous function on $\partial_\infty\Hyp^2$ which is finite on $\overline{\Omega}\cap\partial_\infty\Hyp^2$.) Let $f:\Sigma\to\Omega$ be the Gauss map of such surface. By Theorem \ref{cor graph metric}, $f^{-1}$ is realized in $\R^{2,1}$, which implies that there exists an isometric embedding of $\Omega$ onto an entire hyperbolic surface.
\end{proof}

\bibliographystyle{alpha}
\bibliographystyle{ieeetr}
\bibliography{../biblio_new.bib}

\end{document}